\renewcommand{\tilde}{\widetilde}
\newcommand{\mathsym}[1]{{}}
\newcommand{\naturals}{\mathbb{N}}
\newcommand{\diam}{\operatorname{diam}}
\newcommand{\co}{\colon\thinspace}
\newcommand{\oskel}{^{(1)}}
\renewcommand{\bar}{\overline}
\newcommand{\Stab}{\operatorname{Stab }}
\newcommand{\cal}[1]{\mathcal{#1}}
\def\mc {\mathcal}
\newcommand{\hyp}{\mathfrak{h}}		
\newcommand{\leftQ}[2]{\left.\raisebox{-.2em}{$#2$}\middle\backslash\raisebox{.2em}{$#1$}\right.}
\newcounter{probnum}
\newtheorem{theorem}{Theorem}[section]
\newtheorem{definition}[theorem]{Definition}
\newtheorem{proposition}[theorem]{Proposition}
\newtheorem{cor}[theorem]{Corollary}
\newtheorem{lemma}[theorem]{Lemma}
\newtheorem{example}[theorem]{Example}
\newtheorem{remark}[theorem]{Remark}
\newtheorem{hypotheses}[theorem]{Hypotheses}
\newtheorem{obs}[theorem]{Observation}
\title[Separation and Relative Quasi-convexity Criteria for Relatively Geometric Actions]{Separation and Relative Quasi-convexity Criteria for Relatively Geometric Actions}
\author{Eduard Einstein}
\author{Daniel Groves}
\author{Thomas Ng}
\begin{document}
	\maketitle
	
	\setcounter{tocdepth}{1}

\begin{abstract}
Bowditch characterized relative hyperbolicity in terms of group actions on fine hyperbolic graphs with finitely many edge orbits and finite edge stabilizers. In this paper, we define generalized fine actions on hyperbolic graphs, in which the peripheral subgroups are allowed to stabilize finite sub-graphs rather than stabilizing a point.  Generalized fine actions are useful for studying groups that act relatively geometrically on a CAT(0) cube complex, which were recently defined by the first two authors.  Specifically, we show that a group acting relatively geometrically on a CAT(0) cube complex admits a generalized fine action on the one-skeleton of the cube complex. For generalized fine actions, we prove a criterion for relative quasiconvexity as subgroups that cocompactly stabilize quasi-convex sub-graphs, generalizing a result of Martinez-Pedroza and Wise in the setting of fine hyperbolic graphs. As an application, we obtain a characterization of boundary separation in generalized fine graphs and use it to prove that Bowditch boundary points in relatively geometric actions are always separated by a hyperplane stabilizer.
\end{abstract}


\section{Introduction}

There are many equivalent formulations of relatively hyperbolic groups, see for example \cite{BowditchRH,DrutuSapir,FarbRelHypGroups,Gromov87,GM08,Hruska2010,OsinRelHyp}.	In \cite{BowditchRH}, Bowditch describes relative hyperbolicity in terms of an action on a fine hyperbolic graph with certain finiteness conditions (see Definition~\ref{def:fine} for the definition of `fine').
	A natural example of such a graph is the coned-off Cayley graph of a relatively hyperbolic pair, defined by Farb \cite{FarbRelHypGroups}. Unfortunately, fineness and other important finiteness properties of the action of $(G,\mc{P})$ on the coned-off Cayley graph are not  quasi-isometry invariants. In \cite{RelCannon}, the first and second author introduced the notion of a relatively hyperbolic pair $(G,\mc{P})$ acting \emph{relatively geometrically} on a CAT(0) cube complex $\tilde{X}$.  
In this situation, a result of Charney and Crisp, \cite[Theorem 5.1]{CharneyCrisp}, implies that $\tilde{X}$ and its $1$--skeleton $\tilde{X}\oskel$ is quasi-isometric to the coned-off Cayley graph of $(G,\mc{P})$. However, the edge stabilizers of $\tilde{X}\oskel$ are often infinite.
	
	In this paper, we develop tools to translate geometric features of a generalized fine action on a hyperbolic graph to the Bowditch boundary of $(G,\mc{P})$.
We also apply these tools to prove some fundamental results about relatively hyperbolic groups that act relatively geometrically on CAT(0) cube complexes.
	The Bowditch boundary, a compact boundary for a relatively hyperbolic pair $(G,\mc{P})$, was first introduced by Bowditch in \cite{BowditchRH}. 
	One way Bowditch constructs this boundary is from a fine hyperbolic graph $\Gamma$ that witnesses the relative hyperbolicity of $(G,\mc{P})$. 
	As a set, the Bowditch boundary is the disjoint union of the visual boundary of $\Gamma$ with the vertices of $\Gamma$ that have infinite stabilizer. Bowditch endows this set with the topology we describe in Definition~\ref{D: fine boundary}. 
	In our study of relatively geometric actions, we would like to take advantage of the correspondence between the CAT(0) cube complex $\tilde{X}$ and the coned-off Cayley graph for $(G,\mc{P})$ to use the geometry of $\tilde{X}$ to prove statements about the Bowditch boundary of $(G,\mc{P})$.

	Let $(G,\mc{P})$ be a relatively hyperbolic group pair where $G$ acts by isometries on a CAT(0) cube complex $\tilde{X}$. The action $G$ is \textbf{relatively geometric} if:
\begin{enumerate}
\item the quotient of $\leftQ{\tilde{X}}{G}$ is compact,
\item every infinite cell stabilizer is a finite index subgroup of $P^g$ for some $P\in \mc{P}$ and $g \in G$,
\item every $P\in\mc{P}$ acts elliptically.
\end{enumerate}

If $(G,\mc{P})$ acts relatively geometrically on $\tilde{X}$ then $\tilde{X}$ (and also its $1$--skeleton $\tilde{X}^{(1)}$) are quasi-isometric to the coned-off Cayley graph of $(G,\mc{P})$, by \cite[Theorem 5.1]{CharneyCrisp}.  However, in general the graph $\tilde{X}^{(1)}$ is not a fine graph.  Thus,
to help study actions similar to relatively geometric actions, we introduce generalized fine actions on a hyperbolic graph:
\begin{definition}\label{D: genfine}
Let $G$ be a group that acts by isometries on a $\delta$--hyperbolic graph $\Gamma$ and let $\mc{P}$ be a finite and almost malnormal collection of finitely generated subgroups of $G$.
For each $P\in \mc{P}$ and $g\in G$, let $\Gamma_{P^g}$ be the subgraph of $\Gamma$ whose cells have stabilizer commensurable to $P^g$.
We say that $\Gamma$ is \textbf{generalized fine with respect to the action of $(G,\mc{P})$} if:
\begin{enumerate}
\item the quotient $\leftQ{\Gamma}G$ is compact,
\item every cell with infinite stabilizer lies in $\Gamma_{P^g}$ for some $P\in\mc{P}$ and $g\in G$,
\item each subgraph $\Gamma_{P^g}$ is compact and connected, and
\item for every $n\in\naturals$, every edge with finite stabilizer lies in finitely many circuits of length $n$.
\end{enumerate}
\end{definition}
It is immediate from the definitions that a fine graph is also generalized fine. Generalized fine actions witness relative hyperbolicity. We prove in Proposition~\ref{P: gen fine rel hyp} that $(G,\mc{P})$ in Definition~\ref{D: genfine} is a relatively hyperbolic pair. 
Relatively geometric actions immediately give rise to generalized fine actions: in Example~\ref{E: relgeom is genfine} below we show that $\tilde{X}\oskel$ is generalized fine with respect to the induced action of $(G,\mc{P})$.

	We say that $H\le K$ has a \textbf{(quasi-)convex cocompact core} if $K$ stabilizes a (quasi-)convex $H$--cocompact connected subgraph (see Definition~\ref{D: core} for a precise definition). 
Our first main result shows that the existence of a quasi-convex cocompact core that interacts nicely with the sub-graphs stabilized by peripheral conjugates implies relative quasi-convexity:
	\begin{restatable}{theorem}{qccriterion}\label{Thm: rqc criterion}  
	Let $(G,\mc{P})$ be a relatively hyperbolic pair and let $\Gamma$ be a hyperbolic graph with a $G$--action so that $\Gamma$ is generalized fine with respect to the action of $(G,\mc{P})$. For any $P\in \mc{P}$ and $g\in G$, let $\Gamma_{P^g}$ be the sub-graph of $\Gamma$ whose cell stabilizer are commensurable to $P^g$. 
	If $H\le G$ has a quasi-convex cocompact core $\Gamma_H$ and one of the following hold:
	\begin{itemize}
	\item $\Gamma$ is fine or
	\item for all $P\in \mc{P}$ and $g\in G$, $\Gamma_H\cap \Gamma_{P^g} \ne\emptyset$ implies $|P^g\cap H|=\infty$,
	\end{itemize}
	then $H$ is relatively quasi-convex in $(G,\mc{P})$. 
	\end{restatable}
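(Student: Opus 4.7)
The plan is to reduce both bulleted cases to the Martinez-Pedroza--Wise criterion for relative quasi-convexity in Bowditch's fine-graph setting. If $\Gamma$ is already fine, that criterion applies verbatim to $(H,\Gamma_H)$, yielding relative quasi-convexity. For the second bullet, where $\Gamma$ is only generalized fine, I would construct an auxiliary fine hyperbolic $(G,\mc{P})$-graph $\hat\Gamma$ by coning off each peripheral sub-graph, upgrade $\Gamma_H$ to a quasi-convex cocompact core inside $\hat\Gamma$, and then appeal to the first case.

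Concretely, for each $P\in\mc{P}$ and $g\in G$ adjoin a new cone vertex $v_{P^g}$ together with an edge from $v_{P^g}$ to every vertex of $\Gamma_{P^g}$, and let $\hat\Gamma$ denote the resulting graph with its natural $G$-action. Since each $\Gamma_{P^g}$ has bounded diameter by Definition~\ref{D: genfine}(3), it is uniformly quasi-convex in $\Gamma$, and $\hat\Gamma$ remains hyperbolic. Fineness of $\hat\Gamma$ then follows from condition~(4) of Definition~\ref{D: genfine} for edges inherited from $\Gamma$, together with a circuit-counting argument at the new cone vertices that uses compactness of each $\Gamma_{P^g}$ and almost malnormality of $\mc{P}$. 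Having identified a fine hyperbolic graph witnessing the relative hyperbolicity of $(G,\mc{P})$, set
\[
\hat\Gamma_H \;=\; \Gamma_H \;\cup\; \bigcup\bigl\{\Gamma_{P^g}\cup\{v_{P^g}\} : \Gamma_H\cap\Gamma_{P^g}\ne\emptyset\bigr\},
\]
with all attendant cone edges included.

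It remains to check that $\hat\Gamma_H$ is quasi-convex and $H$-cocompact in $\hat\Gamma$. Cocompactness is exactly where the second bullet enters: the hypothesis $|P^g\cap H|=\infty$ together with almost malnormality of $\mc{P}$ forces $H\cap\Stab_G(v_{P^g})$ to have finite index in $\Stab_G(v_{P^g})$, and then $H$-cocompactness of $\Gamma_H$ plus compactness of each $\Gamma_{P^g}$ produces only finitely many $H$-orbits of added cone vertices and peripheral sub-graphs. Quasi-convexity of $\hat\Gamma_H$ follows by a standard de-coning argument: a $\hat\Gamma$-geodesic between two vertices of $\hat\Gamma_H$ lifts to a quasi-geodesic in $\Gamma$ by replacing each two-step jump through a cone vertex with a path across the corresponding compact $\Gamma_{P^g}$; quasi-convexity of $\Gamma_H$ in $\Gamma$ keeps this lift close to $\Gamma_H$, and re-coning shows the original geodesic lies close to $\hat\Gamma_H$. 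Applying the first bullet (i.e., Martinez-Pedroza--Wise) inside $\hat\Gamma$ then gives relative quasi-convexity of $H$ in $(G,\mc{P})$. I expect the main obstacle to be the verification of fineness of $\hat\Gamma$ at cone vertices — controlling circuits through cones so that they remain finite in each length — and, more subtly, ensuring that the peripheral intersection hypothesis is precisely the right strength both for cocompactness of the set of new cone vertices and for the de-coning argument to stay uniformly close to $\Gamma_H$ throughout.
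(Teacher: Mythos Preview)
Your reduction strategy is natural, but the auxiliary graph $\hat\Gamma$ you build by \emph{coning off} each $\Gamma_{P^g}$ does not meet the hypotheses of the Martinez-Pedroza--Wise criterion. The edges lying inside each $\Gamma_{P^g}$ keep their infinite stabilizers in $\hat\Gamma$, and the new cone edges you add, joining $v_{P^g}$ to a vertex $w\in\Gamma_{P^g}$, have stabilizer containing $\Stab_G(w)\cap P^g$, which is again infinite by the very definition of $\Gamma_{P^g}$. Since \cite[Theorem~1.7]{WiseMP10} (Corollary~\ref{C: fine hyperbolic rqc criterion} here) requires finite edge stabilizers, you cannot invoke it on $\hat\Gamma$. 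Relatedly, condition~(4) of Definition~\ref{D: genfine} only controls circuits through edges with \emph{finite} stabilizer, so your fineness argument does not cover the edges just described; indeed, distinct vertices of $\hat\Gamma$ can have infinite common stabilizer, so Bowditch's pair--stabilizer criterion for fineness \cite[Lemma~4.5]{BowditchRH} fails as well. A separate slip: $|P^g\cap H|=\infty$ does not force $H\cap P^g$ to have finite index in $P^g$; fortunately that claim is unnecessary for cocompactness of $\hat\Gamma_H$, since each $\Gamma_{P^g}$ is already compact and only finitely many $H$--orbits of them meet $\Gamma_H$.

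The paper avoids this obstacle by \emph{collapsing} each $\Gamma_{P^g}$ to a point (the complete electrification $\Gamma'$ of Proposition~\ref{P: gen fine rel hyp}) rather than coning; this kills every edge with infinite stabilizer, so $\Gamma'$ has finite pair stabilizers and is fine by \cite[Lemma~4.5]{BowditchRH}. From there the paper does \emph{not} reduce to Martinez-Pedroza--Wise; it argues directly via Hruska's dynamical characterization (QC-1). One checks, using the second bullet to identify which edges of $\Gamma_H$ have infinite $H$--stabilizer, that $\Gamma_H$ is itself generalized fine for the $H$--action; its electrification $\Gamma_H'$ embeds in $\Gamma'$, one proves $\partial_B\Gamma_H'$ is closed in $\partial_B\Gamma'$ and equals $\Lambda H$, and concludes that $H$ acts geometrically finitely on its limit set. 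Corollary~\ref{C: fine hyperbolic rqc criterion} is then \emph{derived} from Theorem~\ref{Thm: rqc criterion}, not used as input. Your outline becomes viable if you replace coning by collapsing and then cite \cite{WiseMP10} as a black box, but note that the paper is deliberately structured to give an independent proof of that result.
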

	 
As with the definition of relative hyperbolicity, there are many equivalent characterizations of relative quasi-convexity, see \cite{Hruska2010}. As a consequence of Theorem~\ref{Thm: rqc criterion} we provide an alternate proof of a theorem of Martinez-Pedroza and Wise \cite{WiseMP10} characterizing relative quasi-convexity in terms of quasi-convex cores in fine hyperbolic graphs:
	\begin{restatable}{cor}{finecriterion} $($\cite[Theorem 1.7]{WiseMP10}$)$\label{C: fine hyperbolic rqc criterion}
	Let $(G,\mc{P})$ be a relatively hyperbolic pair acting cocompactly on a fine hyperbolic graph so that every edge stabilizer is finite. A subgroup $H\le G$ is relatively quasi-convex in $(G,\mc{P})$ if and only if $H$ has a quasi-convex cocompact core in $\Gamma$. 
	\end{restatable}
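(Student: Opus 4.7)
One direction is immediate from Theorem~\ref{Thm: rqc criterion}. Since every fine graph is generalized fine (as noted immediately after Definition~\ref{D: genfine}), if $H$ admits a quasi-convex cocompact core $\Gamma_H$ in the fine hyperbolic graph $\Gamma$, then the first bullet of Theorem~\ref{Thm: rqc criterion} applies and $H$ is relatively quasi-convex in $(G,\mc{P})$. All the work thus lies in the converse.

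For the converse, assume $H$ is relatively quasi-convex in $(G,\mc{P})$. Fix a basepoint $v_0 \in \Gamma$ and define $\Gamma_H$ to be the subgraph consisting of the orbit $H \cdot v_0$ together with every $\Gamma$-geodesic between a pair of vertices of $H \cdot v_0$. By construction $\Gamma_H$ is $H$-invariant (since $H$ permutes ordered pairs of orbit vertices and sends geodesics to geodesics), it is connected, and it is quasi-convex in $\Gamma$: any $\Gamma$-geodesic between two points of $\Gamma_H$ is, by thinness of quadrilaterals in a $\delta$-hyperbolic graph, within bounded distance of a geodesic between two orbit points, hence within bounded distance of $\Gamma_H$.

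The content is the cocompactness of $H \acts \Gamma_H$. Here the plan is to invoke the equivalent characterization of relative quasi-convexity from \cite{Hruska2010}: any $\Gamma$-geodesic $\gamma$ between two points of $H\cdot v_0$ decomposes into vertices within uniformly bounded combinatorial distance of $H \cdot v_0$ and vertices equal to a peripheral coset vertex $g P$ with $|H \cap g P g\inv|=\infty$. Thus an edge $e$ of $\Gamma_H$ with finite stabilizer sits on some such $\gamma$ at bounded distance from an orbit point $h v_0$, so an $H$-translate of $e$ lies in a circuit of uniformly bounded length passing through a fixed finite set of edges near $v_0$; fineness of $\Gamma$ then forces only finitely many $H$-orbits of such edges. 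For edges of $\Gamma_H$ incident to an infinite-stabilizer vertex $gP$, almost-malnormality identifies the $H$-stabilizer of $gP$ with $H\cap gPg\inv$, and cocompactness of $G\acts\Gamma$ together with the fact that $\Gamma_H$ passes through only finitely many $H$-conjugacy classes of such peripheral vertices (a standard consequence of relative quasi-convexity) reduces the count of peripheral edges to finitely many $H$-orbits.

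The main obstacle is the extraction, from a $\Gamma$-geodesic carrying a finite-stabilizer edge $e$, of an honest bounded-length circuit to which fineness applies; keeping track of how long $\gamma$ can travel near a peripheral coset before returning to the orbit is the technical crux. The remaining bookkeeping — $H$-invariance, connectedness, and quasi-convexity of $\Gamma_H$, and finiteness of $H$-orbits of peripheral pieces — is formal once the hypotheses from \cite{Hruska2010} are translated into the combinatorics of $\Gamma$.
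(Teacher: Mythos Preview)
Your forward direction matches the paper exactly: both invoke Theorem~\ref{Thm: rqc criterion} with the fine-graph bullet.

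For the converse, the paper takes a much shorter route than you do. Rather than building $\Gamma_H$ from an orbit and all geodesics between orbit points, the paper simply takes $\Gamma_H$ to be the \emph{join} of the limit set $\Lambda H$ in $\Gamma$ and cites \cite[end of Section~5]{BowditchRH} for the fact that this join is $H$--cocompact and quasi-convex. Bowditch proves there that when $H$ acts as a geometrically finite convergence group on $\Lambda H$ (which is exactly Definition~\ref{QC1}), the union of all geodesics in $\Gamma$ joining pairs of points of $\Lambda H$ is an $H$--cocompact quasi-convex subgraph. This sidesteps entirely the circuit-extraction you identify as the technical crux.

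Your plan is not wrong in spirit, but two points deserve caution. First, the characterization you attribute to \cite{Hruska2010}---that geodesics between orbit points decompose into vertices close to $H\cdot v_0$ and peripheral vertices $gP$ with $|H\cap gPg^{-1}|=\infty$---is stated by Hruska for specific model spaces (the Cayley graph, the coned-off Cayley graph, the cusped space), not for an arbitrary fine hyperbolic $G$--graph $\Gamma$. You would need to transport it to $\Gamma$ via a $G$--equivariant quasi-isometry to the coned-off Cayley graph, and check that peripheral vertices correspond correctly; this is doable but is extra work you have not written. Second, the circuit argument you sketch (``an $H$--translate of $e$ lies in a circuit of uniformly bounded length passing through a fixed finite set of edges near $v_0$'') is the genuine content and, as you yourself note, is not yet a proof: being within bounded distance of $v_0$ does not by itself bound the number of edges in a non-locally-finite fine graph, so one really must produce a bounded-length \emph{arc} between two fixed vertices and invoke \cite[Proposition~2.1]{BowditchRH}. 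The paper's route via Bowditch's join avoids both issues by outsourcing them to \cite{BowditchRH}.
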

As an application, we see that if $(G,\mc{P})$ acts relatively geometrically on a CAT(0) cube complex $\tilde{X}$, the stabilizer of a hyperplane in $\tilde{X}$ is relatively quasi-convex in $(G,\mc{P})$. 
The first and third author also will use Corollary~\ref{C: fine hyperbolic rqc criterion} to construct relatively geometric actions of $C'(\frac16)$--small cancellation free products with relatively geometrically cubulated factor groups.

Following the criterion in \cite{BergeronWise} for proper and cocompact cubulations of hyperbolic groups, the first two authors show in \cite[Theorem 2.6]{RelCannon} that if any two points in the Bowditch boundary $\partial_{\mc{D}}K$ of a relatively hyperbolic pair $(K,\mc{D})$ lie in $H$--distinct components of the limit set of a full relatively quasi-convex $H\le K$, then $K$ acts relatively geometrically on a CAT(0) cube complex. 
Our other main result shows that any pair of distinct points in the Bowditch boundary of a relatively geometrically cubulated group can be separated by the limit set of a hyperplane stabilizer:
\begin{restatable}{theorem}{relgeomseparation}\label{T: hyperplane separation theorem}	Let $(K,\mc{D})$ act relatively geometrically on a CAT(0) cube complex $\tilde{X}$. If $x,y\in\partial_{\mc{D}}K$ and $x\ne y$, then there exists a hyperplane $W$ of $\tilde{X}$ and a finite index subgroup $K_W \le \Stab_K(W)$ so that $x,y$ are in $K_W$--distinct components of $\partial_{\mc{D}}K\setminus \Lambda K_W$.
\end{restatable}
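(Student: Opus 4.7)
The plan is to identify $\partial_{\mc{D}}K$ with the boundary constructed from the generalized fine graph $\tilde{X}\oskel$ (which is generalized fine with respect to $(K,\mc{D})$ by Example~\ref{E: relgeom is genfine}), invoke the characterization of boundary separation in generalized fine graphs promised in the abstract to separate the distinct points $x,y$ by a finite set of finite-stabilizer edges, and then upgrade this edge-level separation to a hyperplane-level separation in the Bowditch boundary by passing from a separating edge to its dual hyperplane.

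After fixing such a separating edge, let $W$ be the hyperplane in $\tilde{X}$ dual to that edge and set $H:=\Stab_K(W)$. To apply Theorem~\ref{Thm: rqc criterion} to $H$, I would take $\Gamma_H$ to be the carrier of $W$ in $\tilde{X}\oskel$: this is connected and convex (as the carrier of a hyperplane in a CAT(0) cube complex) and it is cocompactly stabilized by $H$ by relative geometricity, so it is a quasi-convex cocompact core. The second hypothesis of Theorem~\ref{Thm: rqc criterion} also holds: if $\Gamma_H\cap \Gamma_{P^g}\ne\emptyset$ at a vertex $v$, then $\Stab_K(v)$ has finite index in $P^g$, and since $\Stab_K(v)$ permutes the finitely many hyperplanes whose carrier contains $v$, the subgroup of $\Stab_K(v)$ preserving $W$ has finite index in $\Stab_K(v)$, so $P^g\cap H$ is infinite. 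Thus $H$ is relatively quasi-convex and its limit set $\Lambda H\subset \partial_{\mc{D}}K$ is well-defined. I would then set $K_W$ to be the index-at-most-two subgroup of $H$ that preserves each half-space of $W$. The two half-spaces produce disjoint $K_W$--invariant open subsets of $\partial_{\mc{D}}K \setminus \Lambda K_W$ containing $x$ and $y$ respectively, yielding the desired $K_W$--distinct components.

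The main obstacle I anticipate lies in the first step: ensuring that the separating set produced by the generalized fine boundary characterization can be chosen to consist of \emph{finite}-stabilizer edges, so that their duals are genuine hyperplanes that actually separate the boundary rather than being absorbed into the peripheral sub-graphs $\Gamma_{P^g}$. This is precisely where the generalized fine setting goes beyond the fine one: one must carefully track how the compact peripheral sub-graphs interact with separating sets, and simultaneously handle the conical and parabolic cases for $x$ and $y$ through a single uniform criterion formulated at the level of the generalized fine graph.
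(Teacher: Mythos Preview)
Your outline has the right high-level shape but contains genuine gaps at the two key steps.

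\textbf{The separation criterion runs in the opposite direction.} The ``characterization of boundary separation in generalized fine graphs'' in the paper is Theorem~\ref{T: separation criterion}: it takes as \emph{input} a hyperset $L$ with a quasi-convex carrier satisfying the two-sided carrier property, together with the hypothesis that $L$ already separates $x$ and $y$ in the sense of Definition~\ref{D: separation}, and outputs that $x,y$ lie in $K_L$--distinct components of the boundary complement. It does not manufacture a separating set of finite-stabilizer edges for you. Producing the hyperplane $W$ is a separate argument (Proposition~\ref{P: hyperplane separation}), which requires case analysis on whether $x,y$ are conical or parabolic, de-electrifying a geodesic in $\Gamma$ to a quasi-geodesic in $\tilde{X}\oskel$, and then using a Ramsey-type nesting argument (Lemmas~\ref{biinfinite sidedness} and~\ref{ray sidedness}) to find a hyperplane that the quasi-geodesic eventually escapes on both sides. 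You also must verify $x,y\notin\Lambda\Stab_K(W)$; for parabolic points this is nontrivial and uses Lemma~\ref{L: stabilizer classification} and Proposition~\ref{P: hyperplane parabolic iff subgraph intersects}. None of this appears in your sketch.

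\textbf{The final sentence is the whole theorem.} Your assertion that ``the two half-spaces produce disjoint $K_W$--invariant open subsets of $\partial_{\mc{D}}K\setminus\Lambda K_W$'' is precisely the content of Theorem~\ref{T: separation criterion} and Proposition~\ref{P: separation criterion}. The Bowditch boundary is not the visual boundary of $\tilde{X}$ (which is not even proper), so one cannot simply read off openness from the half-space decomposition. The paper proves openness by working in the electrified fine graph $\Gamma$, establishing the two-sided carrier property for the image of the hyperplane carrier (Proposition~\ref{P: hyperplane two-sided carrier}), and then invoking a finiteness argument (Lemma~\ref{L: coarse projection rel Bowditch}) to produce, for each $z\notin\Lambda\Stab_K(L)$, a finite vertex set $V_z$ through which every geodesic crossing $L$ must pass. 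You have asserted the conclusion of this chain without supplying any of its links.

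\textbf{A smaller error.} In your verification of hypothesis~\eqref{item:assump} for the carrier, the claim that $\Stab_K(v)$ permutes ``the finitely many hyperplanes whose carrier contains $v$'' is false: at a parabolic vertex $v$ the link can be infinite, so there are in general infinitely many such hyperplanes (one per edge at $v$, since hyperplanes in a CAT(0) cube complex do not self-osculate). The paper avoids this by cubically subdividing so that $W$ itself becomes the convex core (Corollary~\ref{C: hypstab rel qc}); then $\Gamma_{P^g}$ meeting $W$ forces an edge of $\Gamma_{P^g}$ to be dual to $W$, whence a finite-index subgroup of $P^g$ lies in $\Stab_K(W)$.
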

More generally, if a graph $\Sigma$ is generalized fine with respect to the action of a relatively hyperbolic pair $(K,\mc{D})$, we obtain a technical criterion for deciding when a subgroup of $K$ with a cocompact core in $\Sigma$ separates two points in the Bowditch boundary, see Theorem~\ref{T: separation criterion}. 
Both Theorem~\ref{T: hyperplane separation theorem} and Theorem~\ref{T: separation criterion} are essential tools in the first and third authors' forthcoming work on relative cubulations for small cancellation free products.

The idea behind Theorem~\ref{T: hyperplane separation theorem} is to take a hyperplane $W$ that is dual to an edge of a combinatorial geodesic between $x$ and $y$. 
However, the Bowditch boundary is not the visual boundary of $\tilde{X}$. Moreover, $\tilde{X}\oskel$ is not proper, so it is not clear that such a combinatorial geodesic exists. 
In order to make statements about $\partial_{\mc{D}}K$, we need to \emph{coarsely} translate the geometric features of $W$ and $\tilde{X}$ to a fine hyperbolic graph $\Sigma'$ with a $K$-action that witnesses the relative hyperbolicity of $(K,\mc{D})$. 
While the image of $W$ in $\Sigma'$ will separate $\Sigma'$ into two components, we need to ensure that the limit set of $\Stab_G (W)$ actually separates $x$ and $y$ into distinct complementary components of $\partial_{\mc{D}}K$, which is still not the visual boundary $\Sigma'$.

\subsection{Outline: }
We introduce some background on relatively hyperbolic groups and fine hyperbolic graphs in Section~\ref{S: background}. We also discuss a construction similar to that in \cite{FarbRelHypGroups} to relate paths in two graphs $\Sigma,\Sigma'$ where $\Sigma'$ is formed by collapsing some of the edges of $\Sigma$. Then, we recall some specific properties of relatively geometric actions in Section~\ref{S: relgeom background}. 
In Section~\ref{S: genfine}, we explore the properties of generalized fine actions and show that relatively geometric actions on a cube complex give rise to generalized fine actions on the one-skeleton of the cube complex. 
We then prove Theorem~\ref{Thm: rqc criterion} and Corollary~\ref{C: fine hyperbolic rqc criterion} in Section~\ref{S: rqc technical}. 

The main result of Section~\ref{S: separation criterion} is Theorem~\ref{T: separation criterion}, a technical separation criterion for points in the Bowditch boundary of a relatively hyperbolic group in terms of a generalized fine action. 
Finally in Section~\ref{S: relgeom separation}, we prove Theorem~\ref{T: hyperplane separation theorem} using Theorem~\ref{T: separation criterion}. 

\subsection*{Acknowledgements}
The first author would like to thank Jason Manning for useful conversations that helped shape this work.
EE was partially supported by an AMS--Simons travel grant and a postdoctoral fellowship from the Swartz Foundation.
DG was partially supported by NSF Grants DMS--1904913 and DMS--2037569.
TN was partially supported by ISF grant 660/20, an AMS--Simons travel grant, and at the Technion by a Zuckerman Fellowship.

\section{Obtaining Fine Hyperbolic Graphs from Relatively Geometric Actions}\label{S: background}
	
	\subsection{Fine hyperbolic graphs and relative hyperbolicity.}
	
First, we recall the definition of a fine graph:
\begin{definition}\label{def:fine}
Let $\Gamma$ be a graph. A \textbf{circuit} is an embedded loop in $\Gamma$. The graph $\Gamma$ is \textbf{fine} if for each edge $e$ of $\Gamma$ and every $n\in\naturals$, there exist finitely many circuits of length $n$ containing $e$.
\end{definition}

Here is Bowditch's definition of relative hyperbolicity in terms of fine hyperbolic graphs:
 
\begin{definition}[{\cite[Definition 2]{BowditchRH}, written as stated in \cite[Definition 3.4 (RH-4)]{Hruska2010}}]\label{D: fine relhyp}
Suppose $G$ acts on a $\delta$--hyperbolic graph $\Gamma$ with finite edge stabilizers and finitely many $G$--orbits of edges. If $K$ is fine, and $\mc{P}$ is a set of representatives of the conjugacy classes of infinite vertex stabilizers, then $(G,\mc{P})$ is a \textbf{relatively hyperbolic pair}.
\end{definition}

In Section~\ref{S: rqc technical}, we will also use a dynamical characterization of relative hyperbolicity and relative quasi-convexity due to Yaman~\cite{Yaman}.

As noted by Bowditch \cite[pg 3]{BowditchRH}, fineness is not a quasi-isometry invariant. Here is an example of quasi-isometric graphs where one graph is fine and the other is not:
\begin{example}
Let $\Gamma$ be a graph with $2$ vertices and a single edge joining the two vertices. Let $\Sigma$ be a graph with $2$ vertices and an infinite number of edges between the two vertices. Then $\Gamma$ and $\Sigma$ are quasi-isometric. 
Any edge of $\Sigma$ lies in infinitely many circuits of length $2$, so $\Sigma$ is not fine. 
\end{example}
	
\subsection{Electrification and De-Electrification}

Farb first introduced the notion of electrifying a space in \cite{FarbRelHypGroups} where electrification of the fundamental group of a finite volume hyperbolic 3-manifold is accomplished by collapsing the cosets of cusp subgroups to points. 
This idea inspires the definitions in this subsection. Similar constructions have also been performed in \cite{AbbottManning,BowditchRH,DahmaniMj,Spriano17}.

Let $G$ be a group and suppose $G$ acts by isometries on a graph $\Sigma$. Let $\mc{B}$ be a collection of pairwise disjoint and connected sub-graphs of $\Sigma$.
\begin{definition}\label{D: complete electrification}
The \textbf{complete electrification of $\Sigma$ (with respect to $\mc{B}$)} is the graph $\Sigma'$ that is constructed by contracting each $B\in\mc{B}$ to a vertex $v_B$ of $\Sigma'$.  There is a canonical quotient map $\sigma \co \Sigma \to \Sigma'$, which we call the \textbf{electrification} map.

The \textbf{stable part of $\Sigma$ under the electrification with respect to $\mc{B}$} is denoted 
\[\Sigma_0 = \Sigma\,\,\setminus\,\, \left(\bigcup_{B\in \mc{B}} B\right).\] 
\end{definition}

The stable part $\Sigma_0$ embeds naturally into both $\Sigma$ and $\Sigma'$. When $\mc{B}$ is clear from context, we refer to $\Sigma_0$ as the \textbf{stable part of $\Sigma$}. 
We are interested in how paths behave under electrification.

\begin{definition}\label{D: wo peripheral backtracking}
Let $\gamma$ be a path in $\Sigma$. The path $\gamma$ is \textbf{without peripheral backtracking} if for every $B\in\mc{B}$, $\gamma\,\setminus\, (\gamma\cap B)$ is connected when $\gamma$ is a loop and has at most $2$ components otherwise. 

Similarly, a path $\gamma'$ in $\Sigma'$ is \textbf{without peripheral backtracking} if for all $B\in\mc{B}$, $\gamma'\,\setminus\, v_B$ is connected when $\gamma$ is a loop and has at most $2$ components otherwise. 
\end{definition}

We can relate paths without peripheral backtracking in $\Sigma$ and $\Sigma'$ as follows:

\begin{definition}\label{D: path electrification}
Let $\gamma$ be a path in $\tilde{X}^{(1)}$ without peripheral backtracking. 
The \textbf{electrification} $\gamma'$ of $\gamma$ is the path in $\Sigma'$ constructed by collapsing sub-segments of the form $\gamma\cap B$ to $v_B$.

Similarly, let $\rho'$ be a path in $\Sigma'$ without peripheral backtracking. Let $\bar{\rho}'$ be the closure $\rho'\cap \Sigma_0$ in $\Sigma$. If $\bar{\rho}'\cap B$ fails to be connected, $\bar{\rho}'\cap B$ is exactly two points because $\rho'$ is without peripheral backtracking. A \textbf{complete de-electrification} of $\rho'$ is a path $\rho$ in $\Sigma$ constructed by joining any disconnected $\bar{\rho}'\cap B$ by an embedded path in $B$.
\end{definition}

\subsection{Obtaining an action on a fine hyperbolic graph from a relatively geometric action}
\label{S: relgeom background}

For the following subsection, assume that the relatively hyperbolic pair $(G,\mc{P})$ acts relatively geometrically on a CAT(0) cube complex $\tilde{X}$. 

The main goal of this section is to show that subgroups with convex cores in $\tilde{X}$ have quasi-convex cocompact cores in a fine hyperbolic graph that witnesses the relative hyperbolicity of $(G,\mc{P})$. 
\begin{proposition}\label{P: qc invt subraph}
Let $H\le G$ be a subgroup that stabilizes a convex sub-complex $\tilde{Y}\subseteq\tilde{X}$. Then $(G,\mc{P})$ acts on a fine hyperbolic graph $\Gamma$ so that:
\begin{enumerate}
\item every $P\in\mc{P}$ fixes a vertex of $\Gamma$,
\item every edge stabilizer is finite, 
\item $\leftQ{\Gamma}{G}$ is compact, and
\item there exists a quasi-convex $H$--invariant connected sub-graph $\Gamma_H\subseteq \Gamma$. 
\end{enumerate}
Additionally, if $\tilde{Y}$ is $H$--cocompact, $\Gamma_H$ is $H$--cocompact.
\end{proposition}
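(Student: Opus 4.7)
The plan is to obtain $\Gamma$ by applying the complete electrification construction of Definition~\ref{D: complete electrification} to $\tilde{X}\oskel$. By Example~\ref{E: relgeom is genfine}, the $1$-skeleton $\tilde{X}\oskel$ is generalized fine with respect to $(G,\mc{P})$, so the peripheral subgraphs $\Gamma_{P^g}$ are compact, connected, and pairwise disjoint, and every cell of $\tilde{X}\oskel$ with infinite stabilizer lies in some $\Gamma_{P^g}$. Taking $\mc{B}=\{\Gamma_{P^g}\}_{P\in\mc{P},\,g\in G}$, I would set $\Gamma$ to be the complete electrification of $\tilde{X}\oskel$ with respect to $\mc{B}$, with collapsing map $\sigma\co\tilde{X}\oskel\to\Gamma$.

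Properties (1)--(3) are then essentially immediate from this construction. Each $P\in\mc{P}$ stabilizes $\Gamma_P$ and so fixes its collapsed image $v_{\Gamma_P}\in\Gamma$, giving (1). Edges of $\Gamma$ correspond precisely to edges of $\tilde{X}\oskel$ not contained in any $\Gamma_{P^g}$, and such edges have finite stabilizer by the relatively geometric hypothesis, giving (2). Since $\tilde{X}\oskel/G$ is compact and there are only finitely many $G$-orbits of peripheral subgraphs (each compact), $\leftQ{\Gamma}{G}$ is compact, giving (3). To check fineness, I would use the uniform diameter bound $M$ on the $\Gamma_{P^g}$ provided by $G$-cocompactness: any circuit of length $n$ in $\Gamma$ through an edge $e$ de-electrifies (Definition~\ref{D: path electrification}) to an embedded loop in $\tilde{X}\oskel$ through $e$ whose length is bounded in terms of $n$ and $M$, and by generalized fineness only finitely many such loops exist. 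Hyperbolicity of $\Gamma$ then follows by observing that the same uniform diameter bound makes $\sigma$ a quasi-isometry, while $\tilde{X}\oskel$ is $\delta$-hyperbolic since it is quasi-isometric to the coned-off Cayley graph of $(G,\mc{P})$ by Charney--Crisp.

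For part (4), I would take $\Gamma_H:=\sigma(\tilde{Y}\oskel)$. Since $\tilde{Y}$ is a convex subcomplex of the CAT(0) cube complex $\tilde{X}$, the $1$-skeleton $\tilde{Y}\oskel$ is connected and combinatorially convex in $\tilde{X}\oskel$, so $\Gamma_H$ is a connected $H$-invariant subgraph of $\Gamma$. Quasi-convexity of $\Gamma_H$ in $\Gamma$ then follows because $\sigma$ is a quasi-isometry between hyperbolic graphs and images of convex subsets under such a map are quasi-convex. Finally, if $\tilde{Y}$ is $H$-cocompact then $\leftQ{\tilde{Y}\oskel}{H}$ is compact, and the $G$-equivariant map $\sigma$ descends to a surjection onto $\leftQ{\Gamma_H}{H}$, so $\Gamma_H$ is $H$-cocompact.

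The main obstacle, as I see it, is the fineness verification for $\Gamma$: making the de-electrification argument precise requires showing that an embedded loop in $\Gamma$ lifts to an embedded loop passing through each peripheral subgraph without creating new self-intersections, so one has to couple the compactness of each $\Gamma_{P^g}$ with generalized fineness of $\tilde{X}\oskel$. A secondary delicate point is the quasi-convexity transfer under $\sigma$: this genuinely uses that $\sigma$ is a quasi-isometry, which in turn requires the uniform diameter bound on the peripheral subgraphs.
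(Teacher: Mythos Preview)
Your approach matches the paper's: build $\Gamma$ as the complete electrification of $\tilde{X}\oskel$ with respect to the peripheral subgraphs, verify properties (1)--(3) from the construction, and set $\Gamma_H=\sigma(\tilde{Y}\oskel)$, using that the collapse map is a $G$--equivariant quasi-isometry (Corollary~\ref{C: coarse inverse qi} in the paper) to transfer convexity of $\tilde{Y}\oskel$ to quasi-convexity of $\Gamma_H$.

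The one substantive difference is how you verify fineness of $\Gamma$. The paper (Proposition~\ref{P: electric is fine}) does this directly via \cite[Lemma~4.5]{BowditchRH}: since each maximal parabolic fixes a unique vertex of $\Gamma$, pair stabilizers are finite, and cocompactness gives finitely many edge orbits, so Bowditch's lemma applies. You instead invoke generalized fineness of $\tilde{X}\oskel$ and de-electrify circuits (essentially re-running the proof of Proposition~\ref{P: gen fine rel hyp}). That argument is correct, but be aware of a circularity in your citation chain: in the paper, Example~\ref{E: relgeom is genfine} is \emph{deduced from} Proposition~\ref{P: electric is fine} (via Proposition~\ref{fine vs gen fine}), so by citing it you are implicitly assuming the fineness of $\Gamma$ that you then re-prove. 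The paper's direct route through Bowditch's lemma is shorter and avoids this forward reference; if you want to keep the de-electrification argument self-contained, you would need an independent verification of condition~(4) in Definition~\ref{D: genfine} for $\tilde{X}\oskel$, which does not seem any easier than just applying Bowditch's lemma to $\Gamma$.
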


For each $P\in\cal{P}$ and $g\in G$, let $\tilde{X}_{P^g}$ denote the sub-graph of $\tilde{X}^{(1)}$ consisting of cells whose stabilizer is commensurable to $P^g$. Since infinite stabilizers in $\tilde{X}$ are commensurable with a unique $P^g$, the following is immediate from the definitions.

\begin{lemma}
If $P_1^{g_1} \ne P_2^{g_2}$ then $\tilde{X}_{P_1^{g_1}} \cap \tilde{X}_{P_2^{g_2}} = \emptyset$.
\end{lemma}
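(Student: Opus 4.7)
The plan is to argue by contradiction using the almost malnormality of $\mc{P}$, which is built into the definition of a relatively hyperbolic pair. Suppose for contradiction that there exists a cell $c$ lying in the intersection $\tilde{X}_{P_1^{g_1}} \cap \tilde{X}_{P_2^{g_2}}$. By the definition of $\tilde{X}_{P^g}$ recalled immediately before the lemma, this means $\Stab_G(c)$ is commensurable with $P_1^{g_1}$ and also commensurable with $P_2^{g_2}$.

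Next I would use that peripheral subgroups in a relatively hyperbolic pair are infinite, so $\Stab_G(c)$ is infinite, and commensurability is transitive on infinite subgroups, which forces $P_1^{g_1}$ to be commensurable with $P_2^{g_2}$. In particular, the intersection $P_1^{g_1} \cap P_2^{g_2}$ contains a finite-index subgroup of each, so it is infinite.

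The final step is to apply almost malnormality: for any $P_1, P_2 \in \mc{P}$ and $g_1, g_2 \in G$, if $P_1^{g_1} \cap P_2^{g_2}$ is infinite then $P_1^{g_1} = P_2^{g_2}$. This directly contradicts the hypothesis $P_1^{g_1} \ne P_2^{g_2}$, so no such $c$ can exist.

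There is no real obstacle to surmount here; the lemma is essentially a bookkeeping consequence of the sentence preceding it (``infinite stabilizers in $\tilde{X}$ are commensurable with a unique $P^g$''), and the whole argument is a direct chase through the definitions of $\tilde{X}_{P^g}$, commensurability, and almost malnormality of the peripheral structure.
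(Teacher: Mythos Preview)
Your proof is correct and is precisely the unpacking of what the paper means by ``immediate from the definitions'': the paper simply notes in the sentence preceding the lemma that infinite stabilizers in $\tilde{X}$ are commensurable with a \emph{unique} $P^g$, and your argument via almost malnormality is exactly why that uniqueness holds. There is no substantive difference in approach.
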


Before proving Proposition~\ref{P: qc invt subraph}, we investigate some of the properties of $\tilde{X}\oskel$ and its complete electrification with respect to the collection of sub-graphs of the form $\tilde{X}_{P^g}$. 

By \cite[Proposition 3.5]{RelGeom}, the sub-graph $\tilde{X}_{P^g}$ is the $1$--skeleton of a compact and convex sub-complex of a CAT(0) cube complex.
Then we obtain the following fact about $\tilde{X}_{P^g}$:

\begin{proposition}\label{stab subgraph nice}
The sub-graph $\tilde{X}_{P^g}$ is connected, convex and compact. 
\end{proposition}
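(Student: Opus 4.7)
The plan is to deduce all three conclusions directly from the previously cited input \cite[Proposition 3.5]{RelGeom}, which says that $\tilde{X}_{P^g}$ coincides with the $1$--skeleton of a compact convex subcomplex $\tilde{Y}_{P^g}$ of $\tilde{X}$. Once we have that identification, each of the three claimed properties is a standard consequence of the structure of convex subcomplexes of a CAT(0) cube complex; the only conceptual issue is to match up the notion of convexity for a subgraph of $\tilde{X}\oskel$ with convexity for the underlying subcomplex of $\tilde{X}$.

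First I would invoke \cite[Proposition 3.5]{RelGeom} to fix $\tilde{Y}_{P^g}\subseteq \tilde{X}$, the compact convex subcomplex whose $1$--skeleton is $\tilde{X}_{P^g}$. Compactness of $\tilde{X}_{P^g}$ is then immediate: the $1$--skeleton of a compact cube complex is a compact graph. For connectedness, I would note that a non-empty convex subcomplex of a CAT(0) cube complex is itself a CAT(0) cube complex, hence contractible, and a convex subcomplex is a full subcomplex, so its $1$--skeleton is connected; equivalently, any two $0$--cubes of $\tilde{Y}_{P^g}$ are joined by a combinatorial geodesic of $\tilde{X}$ that stays inside $\tilde{Y}_{P^g}$ by convexity.

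The remaining point is the convexity of $\tilde{X}_{P^g}$ as a subgraph of $\tilde{X}\oskel$. I would use the standard characterization: a subcomplex $\tilde{Y}\subseteq \tilde{X}$ is convex if and only if its $1$--skeleton is a convex subgraph of $\tilde{X}\oskel$ in the combinatorial edge-path metric, equivalently the set of $0$--cubes of $\tilde{Y}$ is an intersection of half-spaces of $\tilde{X}$. Applied to $\tilde{Y}_{P^g}$, this gives that any combinatorial geodesic between vertices of $\tilde{X}_{P^g}$ in $\tilde{X}\oskel$ lies entirely in $\tilde{X}_{P^g}$, which is what is needed.

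The main (and essentially only) obstacle is bookkeeping: making sure to cite the equivalence between convexity of a subcomplex of a CAT(0) cube complex and combinatorial convexity of its $1$--skeleton, so that no additional argument is required beyond invoking \cite[Proposition 3.5]{RelGeom}. All three properties then follow without any new calculation.
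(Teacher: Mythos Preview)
Your proposal is correct and matches the paper's approach exactly: the paper simply records this proposition as an immediate consequence of \cite[Proposition 3.5]{RelGeom} (that $\tilde{X}_{P^g}$ is the $1$--skeleton of a compact convex subcomplex) without giving any further argument. Your write-up just unpacks that deduction in more detail than the paper does.
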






\begin{proposition}\label{P: electric qi}
Let $\Gamma$ be the complete electrification of $\tilde{X}^{(1)}$ with respect to 
\[\mc{B} = \{\tilde{X}_{P^g}:\, P\in\mc{P},\,g\in G\}\] and let $\Gamma_0$ be the stable part of $\tilde{X}^{(1)}$. There exist a cocompact action of $G$ on $\Gamma$ and a $G$--equivariant quasi-isometry $f:\Gamma\to \tilde{X}^{(1)}$ so that $f|_{\Gamma_0}$ is the identity map. 
\end{proposition}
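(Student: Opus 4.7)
The plan is to transfer the $G$-action and coarse geometry from $\tilde{X}^{(1)}$ to $\Gamma$ through the electrification map $\sigma$, using the uniform diameter bound on members of $\mc{B}$ (guaranteed by Proposition~\ref{stab subgraph nice}) to control distortion.

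First, I would observe that $\mc{B}$ is $G$-invariant: for $h\in G$, $h\cdot \tilde{X}_{P^g}=\tilde{X}_{P^{hg}}\in \mc{B}$, since $h$ conjugates cell stabilizers commensurable to $P^g$ onto cell stabilizers commensurable to $P^{hg}$. Hence $\sigma\co \tilde{X}^{(1)}\to \Gamma$ is $G$-equivariant for the induced action on $\Gamma$, and cocompactness of $G\curvearrowright \Gamma$ follows from cocompactness of $G\curvearrowright \tilde{X}^{(1)}$: the image under $\sigma$ of any compact fundamental domain for $G\curvearrowright \tilde{X}^{(1)}$ is a compact fundamental domain for $G\curvearrowright \Gamma$.

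Next, I would construct a $G$-equivariant section $f\co \Gamma\to \tilde{X}^{(1)}$. Set $f$ to be the identity on $\Gamma_0$. For each $G$-orbit of $\mc{B}$, pick a representative $B$ and a point $p_B\in B$ fixed by $\Stab_G(B)$, then define $f(v_{hB})=h\cdot p_B$, which is well defined by $\Stab_G(B)$-invariance of $p_B$. Such a fixed point exists because almost malnormality of $\mc{P}$ forces $\Stab_G(B)$ to be commensurable with some $P^g$, which acts elliptically on $\tilde{X}$; combined with the compactness and CAT(0) convexity of $B$ given by Proposition~\ref{stab subgraph nice}, a CAT(0) circumcenter argument yields the desired fixed point (interior-of-edge fixed points are acceptable since $\tilde{X}^{(1)}$ is a topological graph, though passing to a cubical subdivision would land one on a vertex). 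Extend $f$ equivariantly over each $G$-orbit of edges of $\Gamma$ meeting a collapsed vertex by choosing edge-paths in $\tilde{X}^{(1)}$ of length at most $2\diam(B)+1$ between the two endpoint images.

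To verify $f$ is a quasi-isometry, let $D=\max_{P\in\mc{P}}\diam_{\tilde{X}^{(1)}}(\tilde{X}_P)$, which is finite by Proposition~\ref{stab subgraph nice} and finiteness of $\mc{P}$. Each edge of $\Gamma$ lifts under $f$ to a path of length at most $2D+1$, giving the upper bound $d_{\tilde{X}^{(1)}}(f(u),f(w))\le (2D+1)\,d_\Gamma(u,w)$. For the lower bound, $\sigma\circ f=\mathrm{id}_\Gamma$ and $\sigma$ is $1$-Lipschitz on vertices, so $d_\Gamma(u,w)\le d_{\tilde{X}^{(1)}}(f(u),f(w))$. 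Coarse surjectivity is immediate: each vertex of $\tilde{X}^{(1)}$ either lies in $\Gamma_0$ (fixed by $f$) or lies within distance $D$ of some $f(v_B)$. The main obstacle I expect is assembling the fixed-point data cleanly: verifying via almost malnormality that $\Stab_G(B)$ is really commensurable with $P^g$, and then extracting a genuinely $G$-equivariant (not merely coarsely equivariant) family of fixed points, so that the extension of $f$ across collapsed edges has no lingering choice ambiguity.
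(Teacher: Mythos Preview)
Your proposal is correct and follows essentially the same route as the paper: define $f$ to be the identity on the stable part, send each collapsed vertex $v_B$ to a chosen point of $B$, and use the uniform diameter bound on the $\tilde{X}_{P^g}$ (finitely many orbits of compact subgraphs) to verify both quasi-isometry inequalities and coarse surjectivity. The paper's argument is terser---it simply fixes an arbitrary $x_{P^g}\in\tilde{X}_{P^g}$ for each peripheral coset without arranging that it be a fixed point, which as written only yields a \emph{coarsely} $G$--equivariant map (any two admissible choices differ by at most $\max_{P,g}\diam\tilde{X}_{P^g}$). Your circumcenter step is exactly what upgrades this to the strict equivariance claimed in the statement, and your remark about passing to a cubical subdivision is the right fix for the possibility that the CAT(0) fixed point lands in the interior of a higher-dimensional cube rather than in $\tilde{X}^{(1)}$. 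The extension of $f$ over edges and the identity $\sigma\circ f=\mathrm{id}_\Gamma$ on vertices are convenient packaging but not essential; the paper simply notes that the collapse map is distance non-increasing to get the lower Lipschitz bound.
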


\begin{proof}
Observe that in $\tilde{X}^{(1)}$, $G\Gamma_0 \subseteq \Gamma_0$ because cells of $\tilde{X}^{(1)}$ in $\Gamma_0$ are precisely those with finite stabilizer. 
For each $g,h\in G$ and $P\in \mc{P}$, set $g\cdot v_{P^{h}} = v_{P^{gh}}$. 

The quotient $\leftQ{\Gamma}G$ differs from the quotient $\leftQ{\tilde{X}^{(1)}}G$ by collapsing finitely many edges of a finite graph, so $\leftQ{\Gamma}G$ is still compact. 

For each $\tilde{X}_{P^g}$ fix $x_{P^g}\in\tilde{X}_{P^g}$. 
We define $f:\Gamma\to \tilde{X}^{(1)}$ as follows:
\[f(x) = \begin{cases}
x & x\in\Gamma_0 \\ x_{P^g} & x = v_{P^g}
\end{cases}\]

There are finitely many $G$--orbits of $\tilde{X}_{P^g}$ and each $\tilde{X}_{P^g}$ is compact, so there exists $s\ge 0$ that uniformly bounds the diameter of all $\tilde{X}_{P^g}$. 
It follows immediately that $f$ is coarsely surjective.

Let $x,y$ be vertices of $\Gamma$ and choose a geodesic path $\gamma'$ connecting them in $\Gamma$. 
Let $D$ be the length of $\gamma'$ in $\Gamma$. 
There exists a de-electrification $\gamma$ of $\gamma'$ so that $\gamma'$ has length at most $D+Ds$, since $\gamma'$ encounters at most $D$ vertices of the form $v_{P^g}$ in its interior. 
Extend $\gamma'$ to a path $\gamma''$ joining $f(x)$ to $f(y)$ by adding segments of length at most $s$ to each end. 
The distance in $\tilde{X}^{(1)}$ between $f(x)$ and $f(y)$ is at most $D+Ds +2s$. 

We see that $f$ is distance non-decreasing because $\Gamma$ is formed from $\tilde{X}^{(1)}$ by collapsing sub-graphs to points. 
Hence $f$ is a quasi-isometry. 
\end{proof}

There is also a $G$--equivariant coarse inverse:
\begin{cor}\label{C: coarse inverse qi}
Let $c:\tilde{X}^{(1)}\to\Gamma$ be the map that collapses $X_{P^g}$ to $v_{P^g}$ and fixes $f(\Gamma_0)$. Then $c$ is a $G$--equivariant quasi-isometry that fixes the image of the stable part of $\Gamma_0$ in $\tilde{X}^{(1)}$. 
\end{cor}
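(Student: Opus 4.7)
The plan is to show that $c$ is a coarse inverse to the quasi-isometry $f$ from Proposition~\ref{P: electric qi}, and then extract quasi-isometry constants for $c$ from those of $f$. First I would check $G$-equivariance directly: the $G$-action on $\Gamma$ was defined so that $g \cdot v_{P^h} = v_{P^{gh}}$, and the action on $\tilde{X}^{(1)}$ satisfies $g \cdot \tilde{X}_{P^h} = \tilde{X}_{P^{gh}}$, so the collapse is compatible with both actions. Since $\Gamma_0$ is $G$-invariant (it is exactly the set of cells of $\tilde{X}^{(1)}$ with finite stabilizer) and $c$ restricts to the identity there, $G$-equivariance is immediate. The assertion that $c$ fixes the image of $\Gamma_0$ is built into the definition.

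Next I would verify that $c$ is $1$-Lipschitz on vertices. Two adjacent vertices $u,v$ of $\tilde{X}^{(1)}$ fall into one of two cases: either both lie in the same $\tilde{X}_{P^g}$ (so $c(u) = c(v) = v_{P^g}$, hence distance zero), or else they are sent either to adjacent vertices of $\Gamma_0$, or to a vertex of $\Gamma_0$ and some $v_{P^g}$, which are adjacent in $\Gamma$ by the quotient construction. Either way, $d_\Gamma(c(u), c(v)) \le 1$, so $c$ is $1$-Lipschitz and in particular coarsely Lipschitz. Surjectivity of $c$ on vertices is immediate because every vertex of $\Gamma$ is either a vertex of $\Gamma_0$ or of the form $v_{P^g}$, and both types are in the image.

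I would then check that $c$ and $f$ are mutual coarse inverses. By construction $c \circ f = \mathrm{id}_\Gamma$ exactly: for $v_{P^g} \in \Gamma$, $f(v_{P^g}) = x_{P^g} \in \tilde{X}_{P^g}$, which $c$ collapses back to $v_{P^g}$; on $\Gamma_0$ both $f$ and $c$ are the identity. For the other composition, if $x \in \Gamma_0$ then $f(c(x)) = x$, while if $x \in \tilde{X}_{P^g}$ then $f(c(x)) = x_{P^g}$, and $d_{\tilde{X}^{(1)}}(x, x_{P^g}) \le s$ using the uniform diameter bound $s$ for the subgraphs $\tilde{X}_{P^g}$ established in the proof of Proposition~\ref{P: electric qi}. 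So $f\circ c$ is at uniform distance at most $s$ from the identity.

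Finally, combining the $1$-Lipschitz upper bound $d_\Gamma(c(a), c(b)) \le d_{\tilde{X}^{(1)}}(a,b)$ with the triangle-inequality estimate
\[d_{\tilde{X}^{(1)}}(a,b) \le 2s + d_{\tilde{X}^{(1)}}(f(c(a)), f(c(b))) \le 2s + L\, d_\Gamma(c(a), c(b)) + C,\]
where $(L,C)$ are the quasi-isometry constants for $f$, yields matching linear lower bounds on $d_\Gamma(c(a), c(b))$. Together with coarse surjectivity this shows $c$ is a quasi-isometry. I do not anticipate any real obstacle; the only point requiring care is ensuring that $c$ is genuinely $1$-Lipschitz at the edge level, which reduces to the observation that every edge of $\tilde{X}^{(1)}$ either lies inside some $\tilde{X}_{P^g}$ or has both endpoints surviving to adjacent vertices of $\Gamma$.
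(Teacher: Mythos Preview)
Your proposal is correct and matches the paper's intended approach: the paper states the corollary without proof, prefacing it only with ``There is also a $G$--equivariant coarse inverse,'' so the implicit argument is exactly that $c$ is the coarse inverse to the quasi-isometry $f$ of Proposition~\ref{P: electric qi}. Your write-up simply makes this explicit, and the details (equivariance, $1$-Lipschitz bound, $c\circ f=\mathrm{id}_\Gamma$, $f\circ c$ within $s$ of the identity) are all sound.
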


\begin{proposition}\label{P: electric is fine}
The graph $\Gamma$ in Proposition~\ref{P: electric qi} is a fine hyperbolic graph. 
\end{proposition}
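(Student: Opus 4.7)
The plan is to show hyperbolicity and fineness separately, leveraging the quasi-isometry established in Proposition~\ref{P: electric qi}.

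For hyperbolicity, I would apply Proposition~\ref{P: electric qi}, which provides a $G$-equivariant quasi-isometry $f\co \Gamma\to \tilde{X}^{(1)}$. By \cite[Theorem 5.1]{CharneyCrisp}, $\tilde{X}^{(1)}$ is quasi-isometric to the coned-off Cayley graph of $(G,\mc{P})$, and this coned-off Cayley graph is $\delta$-hyperbolic because $(G,\mc{P})$ is a relatively hyperbolic pair (in the Farb/Bowditch sense). Since hyperbolicity is a quasi-isometry invariant of geodesic metric spaces (equivalently, of connected graphs with the edge-path metric), $\Gamma$ is hyperbolic.

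For fineness, fix an edge $e$ of $\Gamma$ and $n \in \naturals$. First I would observe two local finiteness facts: edges of $\Gamma$ correspond bijectively to edges of $\tilde{X}^{(1)}$ lying outside every $\tilde{X}_{P^g}$, so by condition (2) of the relatively geometric action these edges all have finite stabilizer; and each vertex of $\Gamma_0$ has finite stabilizer, hence (by cocompactness of $G\acts\tilde{X}^{(1)}$ and finiteness of edge orbits) finite valence in $\tilde{X}^{(1)}$, and so in $\Gamma$. The only source of trouble is that peripheral vertices $v_{P^g}$ can have infinite valence in $\Gamma$, because many finite-stabilizer edges of $\tilde{X}^{(1)}$ may join $\tilde{X}_{P^g}$ to its complement.

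To tame the peripheral vertices, I would compare $\Gamma$ with the coned-off Cayley graph $\hat{\Gamma}$ of $(G,\mc{P})$, which is fine by Bowditch's definition of relative hyperbolicity (Definition~\ref{D: fine relhyp}). The plan is to build a $G$-equivariant Lipschitz map $\psi\co \Gamma \to \hat{\Gamma}$ sending each $v_{P^g}$ to the coset vertex $v_{gP}$ and each non-peripheral vertex $v$ to a group element within uniformly bounded distance (via the Milnor--\v{S}varc quasi-isometry between $\Gamma_0$ and the Cayley graph of $G$, choosing orbit representatives equivariantly). Then a circuit of length $n$ in $\Gamma$ through $e$ produces a closed path of length at most $Kn$ in $\hat{\Gamma}$ through the image of $e$, and I would invoke fineness of $\hat{\Gamma}$ to bound the number of such closed paths.

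The main obstacle will be showing that this correspondence is uniformly finite-to-one so that fineness transfers back from $\hat{\Gamma}$. Distinct circuits in $\Gamma$ can yield the same closed path in $\hat{\Gamma}$ only when two different entry/exit vertices of some $\tilde{X}_{P^g}$ get identified with the same group element in $gP$ under $\psi$. Using that each $\tilde{X}_{P^g}$ is compact (Proposition~\ref{stab subgraph nice}) and that $P^g$ acts cocompactly on it, the number of such entry/exit choices is uniformly bounded in the $G$-orbit, so the multiplicity is bounded. Combining this bounded multiplicity with the fineness of $\hat{\Gamma}$ yields finiteness of the circuits of length $n$ through $e$ in $\Gamma$.
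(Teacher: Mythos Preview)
Your hyperbolicity argument matches the paper's exactly.

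For fineness, the paper takes a much shorter route that you missed. Once $\Gamma$ is known to be hyperbolic, the paper simply observes that each maximal parabolic $P^g$ stabilizes exactly the single vertex $v_{P^g}$ in $\Gamma$, while all other vertex stabilizers are finite; hence the stabilizer of any pair of distinct vertices is finite. Since $\leftQ{\Gamma}{G}$ is finite, Bowditch's \cite[Lemma 4.5]{BowditchRH} immediately gives that $\Gamma$ is fine. No comparison with the coned-off Cayley graph is needed.

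Your approach of transferring fineness from $\hat\Gamma$ is in principle workable, but there is a genuine error in the execution: there is no Milnor--\v{S}varc quasi-isometry between $\Gamma_0$ and the Cayley graph of $G$. The space $\Gamma_0$ is generally disconnected, and even with the subspace metric from $\Gamma$ it is quasi-isometric to (part of) the \emph{coned-off} Cayley graph, not to $G$ itself; moreover $G$ does not act properly on $\Gamma$, so Milnor--\v{S}varc does not apply. What you actually need is only that the orbit map $G\to\Gamma_0^{(0)}$ has a coarse section with bounded displacement on adjacent vertices, which follows from cocompactness and finiteness of edge stabilizers without any quasi-isometry statement. With that correction your Lipschitz map $\psi$ can be built, but you then still owe the reader (i) why fineness of $\hat\Gamma$ controls closed paths rather than just circuits (this needs \cite[Proposition 2.1 (F2)]{BowditchRH} on arcs, not the raw definition of fineness), and (ii) a precise version of the bounded-multiplicity claim. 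All of this is avoidable by the pair-stabilizer argument above.
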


\begin{proof}
By \cite[Theorem 5.1]{CharneyCrisp}, $\tilde{X}^{(1)}$ is quasi-isometric to the coned-off Cayley graph for $(G,\mc{P})$, so $\tilde{X}^{(1)}$ is a hyperbolic graph.
By Proposition~\ref{P: electric qi}, $\Gamma$ is hyperbolic. 

Vertex stabilizers in $\Gamma$ are maximal parabolic and each maximal parabolic stabilizes exactly one point in $\Gamma$, so $\Gamma$ has finite pair stabilizers. 
By cocompactness, $\leftQ{\Gamma}G$ is finite, so by \cite[Lemma 4.5]{BowditchRH}, $\Gamma$ is a fine graph. 
\end{proof}

We are ready to prove \Cref{P: qc invt subraph}.
\begin{proof}[Proof of Proposition~\ref{P: qc invt subraph}]
By Proposition~\ref{P: electric qi}, the action of $(G,\mc{P})$ provides a cocompact action of $G$ on $\Gamma$ where each $P^g$ fixes $v_{P^g}$. 
Since $H$ stabilizes a convex sub-complex $\tilde{Y}\subseteq \tilde{X}$, $\tilde{X}^{(1)}\cap \tilde{Y} = \tilde{Y}^{(1)}$ is a convex sub-graph of $\tilde{X}^{(1)}$. When $\tilde{Y}$ is $H$--cocompact, $\tilde{Y}^{(1)}$ is also $H$--cocompact. 
The collapse $c:\tilde{X}^{(1)}\to \Gamma$ takes $\tilde{Y}^{(1)}$ to a sub-graph of $\Gamma_H$ of $\Gamma$. 
Since $c$ is a quasi-isometry by \Cref{C: coarse inverse qi}, $\Gamma_H$ is quasi-convex in $\Gamma$. 
\end{proof}

\section{Generalized Fine Actions}
\label{S: genfine}

The behavior in Section~\ref{S: relgeom background} is more general than relatively geometric actions on CAT(0) cube complex and is captured by the definition of generalized fine graphs (Definition~\ref{D: genfine}). 

\begin{hypotheses}\label{H: genfine}
Let $(K,\mc{D})$ be a relatively hyperbolic pair and let $K$ act on a graph $\Sigma$. For each $D\in\mc{D}$ and $k\in K$, let $\Sigma_{D^k}$ be the sub-graph of $\Sigma$ consisting of cells whose stabilizer is commensurable to $D^k$.
\end{hypotheses}

The main goal of this section is to characterize generalized fine actions as follows: 
\begin{enumerate}
\item If electrifying $\Sigma$ with respect to the $\Sigma_{D^k}$ results in a fine hyperbolic graph with an appropriate action, the action of $K$ on $\Sigma$ is generalized fine (Proposition~\ref{fine vs gen fine}). 
\item If the action of $(K,\mc{D})$ on $\Sigma$ is generalized fine, then electrifying the $\Sigma_{D^k}$ results in a fine hyperbolic graph (Proposition~\ref{P: gen fine rel hyp}).
\end{enumerate}

\begin{proposition}\label{fine vs gen fine}
Assume Hypotheses~\ref{H: genfine}. Suppose that
\begin{enumerate}
\item $K$ acts cocompactly on $\Sigma$,
\item every maximal parabolic $D\in\mc{D}$ stabilizes a vertex of $\Sigma$, and
\item the sub-graph $\Sigma_{D^k}$ is connected and compact.
\end{enumerate}
If the complete electrification with respect to $\{\Sigma_{D^k}:\,D\in \mc{D},\,k\in K\}$ is a fine hyperbolic graph, then $\Sigma$ is generalized fine with respect to the action of $(K,\mc{D})$. 
\end{proposition}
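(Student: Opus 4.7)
My plan is to verify the four conditions of Definition~\ref{D: genfine} for the action of $(K,\mc{D})$ on $\Sigma$. Conditions (1) and (3) are immediate from the cocompactness and compactness/connectedness hypotheses, and almost malnormality of $\mc{D}$ is part of $(K,\mc{D})$ being relatively hyperbolic. For the $\delta$--hyperbolicity of $\Sigma$ required by Definition~\ref{D: genfine}, I would mimic the argument of Proposition~\ref{P: electric qi}: compactness of the subgraphs $\Sigma_{D^k}$ together with cocompactness of $K\acts \Sigma$ (hence finitely many $K$--orbits of them) yields a uniform bound on their diameters, so the electrification $\sigma\co \Sigma\to\Sigma'$ is a $K$--equivariant quasi-isometry, and hyperbolicity of $\Sigma'$ transfers to $\Sigma$.

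For Condition (2) on infinite cell stabilizers, I would argue via a loxodromic element. Suppose a cell $c$ of $\Sigma$ has infinite stabilizer not commensurable to any peripheral conjugate $D^k$, so $c$ lies in no $\Sigma_{D^k}$. Then in the relatively hyperbolic pair $(K,\mc{D})$, the subgroup $\Stab_K(c)$ must contain a loxodromic element $g$. Such a $g$ acts loxodromically on $\Sigma'$, and hence on $\Sigma$ via the quasi-isometry above, so $g$ fixes no cell of $\Sigma$, contradicting $g\in \Stab_K(c)$.

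The heart of the argument is Condition (4). Fix a finite-stabilizer edge $e$ of $\Sigma$ and $n\in\naturals$. Any circuit $c$ of length $n$ through $e$ electrifies to a closed edge-walk $\sigma(c)$ of length at most $n$ through the surviving edge $e'=\sigma(e)$ in $\Sigma'$; since $c$ is embedded and only peripheral edges are collapsed, $\sigma(c)$ has no repeated edges (it is a closed trail). I would count such $c$ in two stages. Given a candidate closed trail $\sigma(c)$ in $\Sigma'$, its lifts to circuits of $\Sigma$ are finite in number: each visit of $\sigma(c)$ to a peripheral vertex $v_{D^k}$ corresponds to a maximal subarc of $c$ in $\Sigma_{D^k}$ whose entry and exit are determined by the unique preimages of the two adjacent edges of $\sigma(c)$ (these preimages are unique because the edges survive the electrification and so have finite stabilizer), and whose interior path is one of finitely many arcs in the compact, connected subgraph $\Sigma_{D^k}$.

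The main obstacle is the first stage: bounding the number of closed trails of length at most $n$ through $e'$ in the fine graph $\Sigma'$. Such a trail decomposes as an edge-disjoint union of circuits of $\Sigma'$ of length at most $n$ whose union is a connected subgraph containing $e'$. Fineness of $\Sigma'$ immediately bounds the options for the circuit through $e'$. For each subsequent circuit I would use an inductive count: every such circuit shares a vertex with the partial structure, and I would exploit the fact that a vertex of $\Sigma'$ in the stable part $\Sigma_0$ has finite valence (its stabilizer is finite by Condition (2), and cocompactness yields finitely many $K$--orbits of incident edges, each contributing finitely many edges) to bound the recursive choices at such vertices. Attachments at a peripheral vertex $v_{D^k}$ are the most delicate case, and here I would leverage the specific structure of the lift together with finiteness of $\Sigma_{D^k}$ to reduce back to circuits through the finitely many adjacent stable vertices, allowing the induction to terminate.
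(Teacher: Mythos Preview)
Your outline is more thorough than the paper's proof, which addresses only condition~(4) and does so by restricting to circuits ``without peripheral backtracking'', so that the electrification $\sigma(\gamma)$ is again a genuine circuit in $\Sigma'$; fineness of $\Sigma'$ then applies directly, and the de-electrifications are bounded via compactness of the $\Sigma_{D^k}$. You correctly observe that a general circuit in $\Sigma$ electrifies only to a closed trail in $\Sigma'$, and that some further argument is needed.

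However, your proposed further argument has a genuine gap: a fine graph can have infinitely many closed trails of bounded length through a given edge. Consider the ``book of triangles'' with a central vertex $c$, pairs $u_i,v_i$ for $i\in\integers$, and edges $cu_i$, $cv_i$, $u_iv_i$. Every edge lies in a unique circuit (its own triangle), so the graph is fine, yet for each $i\neq 0$ the walk $u_0,v_0,c,u_i,v_i,c,u_0$ is a closed trail of length~$6$ through $u_0v_0$. This graph even arises in the setting of the proposition (take $\Sigma=\Sigma'$, $K=\integers$ shifting the triangles, $\mc{D}=\{\integers\}$, $\Sigma_D=\{c\}$), so your ``first stage'' cannot be completed using fineness of $\Sigma'$ alone. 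Your inductive scheme fails precisely where you anticipate: at a peripheral vertex the next circuit in the decomposition can be chosen in infinitely many ways, and the fix you sketch---``reduce back to circuits through the finitely many adjacent stable vertices''---does not parse, since a peripheral vertex of $\Sigma'$ has infinitely many neighbours. The constraint that actually rescues the count is that an \emph{embedded} circuit $\gamma$ in $\Sigma$ meets each finite vertex set $\Sigma_{D^k}^{(0)}$ in at most $|\Sigma_{D^k}^{(0)}|$ components, which excludes most closed trails in $\Sigma'$; but this has to be built into the counting from the start rather than appended after an (unachievable) bound on all closed trails.
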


\begin{proof}
Fix $n\ge 0$ and let $e$ be an edge of $\Sigma$ with finite stabilizer.  
Let $\gamma$ be a length $n$--circuit in $\Sigma$ without peripheral backtracking. 
Let $\Gamma$ be the complete electrification of $\Sigma$. 
Since each of the $\Sigma_{D^k}$ is compact and there are finitely many $K$--orbits of $\Sigma_{D^k}$, there is a constant $L(n)$ that bounds the number of length at most $n$ embedded paths in $\Sigma_{D^k}$ from above. 

The map $\sigma:\Sigma\to \Gamma$ that collapses the sub-graphs $\Sigma_{D^k}$ to points $v_{D^k}$ is injective on $e$ because $e$ has finite stabilizer.
The electrification $\gamma'$ of $\gamma$ is a circuit in $\Gamma$ containing $\sigma(e)$. 
Since $\Gamma$ is a fine graph, there exists some $T\ge 0$ so that $\gamma'$ is one of $T$ circuits passing through $\sigma(e)$. 
Since $\gamma$ intersects at most $n$ of the $v_{P^g}$, $\gamma$ can be obtained as a de-electrification of $\gamma'$ where each of the $v_{P^g}$ that $\gamma$ intersects is replaced by an embedded path in the corresponding $\Sigma_{D^k}$.
Hence there are at most $T(L(n))^n$ possibilities for $\gamma$.
Thus every edge with finite stabilizer in $\Sigma$ is contained in only finitely many circuits. 
\end{proof}

Proposition~\ref{P: electric is fine} and Proposition~\ref{fine vs gen fine}  imply that a relatively geometric action gives rise to a generalized fine action. 
\begin{example}\label{E: relgeom is genfine}
If $(G,\mc{P})$ acts relatively geometrically on a CAT(0) cube complex $\tilde{X}$, $\tilde{X}^{(1)}$ is generalized fine with respect to the action of $(G,\mc{P})$. 
\end{example}

The proof of Proposition~\ref{P: electric qi} does not use the fact that $\tilde{X}$ is a CAT(0) cube complex, so we obtain the following natural analogue of Corollary~\ref{C: coarse inverse qi}:
\begin{proposition}\label{P: electric qi generalized}
Assume Hypotheses~\ref{H: genfine}. If $\Gamma$ is the complete electrification of $\Sigma$ with respect to $\{\Sigma_{D^k}:\, D\in\mc{D},\,k\in K\}$ then the electrification map $c:\Sigma\to \Gamma$ is a $K$--equivariant quasi-isometry and $c$ restricts to the identity on the stable part of $\Gamma$ embedded in $\Sigma$.
\end{proposition}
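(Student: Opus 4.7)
The plan is to follow the proof of Proposition~\ref{P: electric qi} essentially verbatim, replacing $\tilde{X}^{(1)}$ by $\Sigma$ and replacing the structural facts coming from relatively geometric actions on CAT(0) cube complexes by the hypotheses in force here. Since the statement refers to the complete electrification, the implicit assumption (as in the preceding discussion of generalized fine actions) is that the $\Sigma_{D^k}$ are pairwise disjoint, connected, and compact, with finitely many $K$-orbits of such subgraphs; these are exactly the ingredients required below.

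First, I would define the $K$-action on $\Gamma$: away from the collapsed subgraphs the action is inherited from $\Sigma$, and on the new vertices I set $k\cdot v_{D^h} = v_{D^{kh}}$. This is well-defined because $k\Sigma_{D^h} = \Sigma_{D^{kh}}$ by the commensurability description of the $\Sigma_{D^h}$, and with this definition the electrification map $c\co\Sigma\to\Gamma$ is tautologically $K$-equivariant. The fact that $c$ restricts to the identity on the stable part $\Sigma_0\subseteq\Sigma$ (as embedded in $\Gamma$) is immediate from the definition of the complete electrification.

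Next I would verify the two quasi-isometry inequalities. For the upper bound on $d_\Gamma(c(x),c(y))$, note that every edge of $\Sigma$ either survives as an edge of $\Gamma$ or is collapsed inside some $\Sigma_{D^k}$ to the single vertex $v_{D^k}$; consequently $c$ is $1$-Lipschitz. For the lower bound, use that cocompactness of the $K$-action on $\Sigma$ together with compactness of each $\Sigma_{D^k}$ and the existence of only finitely many $K$-orbits of such subgraphs yields a uniform constant $s\ge 0$ bounding $\diam(\Sigma_{D^k})$ for all $D\in\mc{D}$ and $k\in K$. Given any geodesic $\gamma'$ in $\Gamma$ of length $L$ from $c(x)$ to $c(y)$, I de-electrify it by replacing each interior vertex $v_{D^k}$ of $\gamma'$ with an embedded path of length at most $s$ inside $\Sigma_{D^k}$ connecting the appropriate attaching points of the two incident edges; if $x$ or $y$ is a coset vertex, attach an initial or terminal path of length at most $s$ to reach an actual preimage. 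This produces a path in $\Sigma$ of length at most $L+(L+2)s$, so
\[
d_\Sigma(x,y) \;\le\; (s+1)\,d_\Gamma(c(x),c(y)) + 2s.
\]
Finally, $c$ is surjective on vertices (each vertex of $\Gamma$ is either in $\Sigma_0$ or is some $v_{D^k}$), so it is in particular coarsely surjective.

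There is no serious obstacle here; the only point requiring care is the derivation of the uniform diameter bound $s$ on the subgraphs $\Sigma_{D^k}$, and this is precisely what the generalized fine setup is designed to supply. All remaining steps are formal and parallel the arguments already written for $\tilde{X}^{(1)}$ in Proposition~\ref{P: electric qi} and Corollary~\ref{C: coarse inverse qi}.
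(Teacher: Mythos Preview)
Your proposal is correct and takes essentially the same approach as the paper, which simply notes that the proof of Proposition~\ref{P: electric qi} does not use the CAT(0) cube complex structure and hence carries over verbatim. The only cosmetic difference is that you argue directly with the collapse map $c$ rather than first building the coarse inverse $f\co\Gamma\to\Sigma$ as in Proposition~\ref{P: electric qi} and then invoking Corollary~\ref{C: coarse inverse qi}; the underlying estimate (de-electrifying a $\Gamma$-geodesic of length $L$ to a $\Sigma$-path of length at most $(s+1)L+2s$ using the uniform diameter bound $s$) is identical.
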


Generalized fine actions witness relative hyperbolicity:
\begin{proposition}\label{P: gen fine rel hyp}
Assume Hypotheses~\ref{H: genfine}. If $\Sigma$ is generalized fine with respect to the action of $(K,\mc{D})$, then the complete electrification $\Sigma'$ with respect to $\mc{B} = \{\Sigma_{D^k}:\,D\in\mc{D},k\in K\}$ is a fine (hyperbolic) graph. Therefore, $(K,\mc{D})$ is a relatively hyperbolic pair. 
\end{proposition}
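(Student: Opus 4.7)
My plan is to verify that $\Sigma'$ is a fine hyperbolic graph with finite edge stabilizers, finitely many $K$-orbits of edges, and with infinite vertex stabilizers represented (up to conjugacy) by $\mc{D}$; the conclusion then follows from Bowditch's Definition~\ref{D: fine relhyp}.

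Hyperbolicity and cocompactness are quickly dispatched using Proposition~\ref{P: electric qi generalized}: the electrification map $c \co \Sigma \to \Sigma'$ is a $K$-equivariant quasi-isometry, and because $\Sigma$ is $\delta$-hyperbolic by hypothesis, $\Sigma'$ is hyperbolic; moreover $\leftQ{\Sigma'}{K}$ is obtained from the compact quotient $\leftQ{\Sigma}{K}$ (condition~(1)) by collapsing the images of the finitely many $K$-orbit representatives of the $\Sigma_{D^k}$, so $K \acts \Sigma'$ is also cocompact. Each edge of $\Sigma'$ lifts to a unique edge of $\Sigma$ not contained in any $\Sigma_{D^k}$, hence has finite stabilizer by condition~(2) of Definition~\ref{D: genfine}, and cocompactness then yields finitely many $K$-orbits of edges. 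The vertices of $\Sigma'$ with infinite stabilizer are precisely the $v_{D^k}$; their stabilizers coincide with the setwise stabilizers of $\Sigma_{D^k}$ in $\Sigma$, and almost malnormality of $\mc{D}$ forces these to be commensurable to $D^k$, so $\mc{D}$ represents the conjugacy classes of infinite vertex stabilizers.

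The substantive step is fineness. Fix $n \in \naturals$ and an edge $e'$ of $\Sigma'$ corresponding to an edge $e$ of $\Sigma$. Since $K \acts \Sigma$ is cocompact and each $\Sigma_{D^k}$ is compact (condition~(3)), there are finitely many $K$-orbits of peripheral subgraphs and hence a uniform bound $M$ on $\diam(\Sigma_{D^k})$. Given any circuit $\gamma'$ of length $n$ in $\Sigma'$ through $e'$, I would build a complete de-electrification $\gamma$ of $\gamma'$ (Definition~\ref{D: path electrification}) by joining successive lifted edges whose endpoints lie in a common $\Sigma_{D^k}$ via an embedded path in that (connected) $\Sigma_{D^k}$. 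Because $\gamma'$ is embedded it visits each $v_{D^k}$ at most once, and distinct peripheral subgraphs are disjoint, so the inserted paths are pairwise disjoint and meet the other lifted edges only at their required endpoints; hence $\gamma$ is itself an embedded circuit in $\Sigma$ through $e$ of length at most $n(M+1)$. Electrification recovers $\gamma'$ from $\gamma$, so electrification gives a surjection from the set of circuits of length at most $n(M+1)$ through $e$ in $\Sigma$ onto the set of length-$n$ circuits through $e'$ in $\Sigma'$. The domain of this surjection is finite by applying condition~(4) to each length $1,\ldots,n(M+1)$, so the codomain is finite too.

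The main obstacle is to ensure that a de-electrification of an embedded circuit in $\Sigma'$ can again be chosen to be an embedded circuit in $\Sigma$; this is precisely what connectedness of each $\Sigma_{D^k}$, pairwise disjointness of distinct peripheral subgraphs, and the embeddedness of $\gamma'$ jointly guarantee. Assembling these pieces, $\Sigma'$ satisfies all the hypotheses of Definition~\ref{D: fine relhyp}, which in particular concludes that $(K, \mc{D})$ is a relatively hyperbolic pair.
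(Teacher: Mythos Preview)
Your proposal is correct and follows essentially the same strategy as the paper: de-electrify a length-$n$ circuit $\gamma'$ through $e'$ to a circuit $\gamma$ through $e$ of length at most $n(M+1)$, then invoke condition~(4) of Definition~\ref{D: genfine} to bound the number of such $\gamma$, and hence of such $\gamma'$. You are in fact more explicit than the paper on two points: you cite Proposition~\ref{P: electric qi generalized} to obtain hyperbolicity of $\Sigma'$ (the paper uses this implicitly), and you justify carefully why the de-electrification can be taken to be an embedded circuit, which the paper simply asserts. One small phrasing issue: your ``surjection'' is really an injection in the other direction (assigning to each $\gamma'$ its chosen de-electrification $\gamma$, with electrification as a left inverse), but the finiteness conclusion is of course the same.
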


\begin{proof}
Let $e'$ be an edge in $\Sigma'$ with finite stabilizer and let $\gamma'$ be a circuit of length $n$ in $\Sigma'$. Since $\gamma'$ is an embedded loop, $\gamma'$ is without peripheral backtracking. Let $\Sigma_0$ be the stable part of $\Sigma$. 
Let $e$ be the unique edge of $\Sigma$ whose interior is $e'\cap\Sigma_0$. 
The complete electrification of any de-electrification of $\gamma'$ returns $\gamma'$.

Therefore, there exists a circuit $\gamma$ containing $e$ so that $\gamma'$ is a complete electrification of $\gamma$. 
Since there are finitely many $K$--orbits of $\Sigma_{D^k}$, there is a uniform bound $L\ge 0$ on the diameters of the $\Sigma_{D^k}\in\mc{B}$. 
Hence the length of $\gamma$ is at most $n + nL$.
Since $\Sigma$ is generalized fine, and $\gamma$ contains $e$, an edge with finite stabilizer, there are only finitely many possibilities for $\gamma$.  Thus there are only finitely many possibilities for $\gamma'$. 

There is a natural action of $K$ on $\Sigma'$ defined as follows: let $x\in \Sigma'$ and $k\in K$.
When $x\in\Sigma_0$, then $k\cdot x$ is defined according to the action of $K$ on $\Sigma_0\subseteq \Sigma$.
Otherwise, $x\ = v_{D^{k_0}}$ for some $D\in\mc{D}$ and some $k_0\in K$, so define $k\cdot x = v_{D^{kk_0}}$.
There are finitely many $K$--orbits of edges because $K$ acts cocompactly on $\Sigma'$, and every edge has finite stabilizer because electrification collapses every edge with infinite stabilizer. 
By Definition~\ref{D: fine relhyp}, the hyperbolicity and fineness of $\Sigma'$ implies $K$ is hyperbolic relative to any finite set of conjugacy class representatives of the infinite vertex stabilizers.  The set $\mc{D}$ is such a finite set.
\end{proof}

\section{Quasi-convex sub-graphs of generalized fine graphs}
\label{S: rqc technical}

In this section, we prove Theorem~\ref{Thm: rqc criterion}. We first formally state the definition of a quasi-convex cocompact core:

\begin{definition}\label{D: core}
Let $G$ act on a hyperbolic graph $\Gamma$ by isometries and let $H\le G$ be a subgroup of $G$.
A \textbf{(quasi-)convex core of $H$ in $\Gamma$} is  a connected sub-graph $\Gamma_H$ so that:
\begin{enumerate}
\item the quotient $\leftQ{\Gamma_H}{H}$ is compact, and
\item $\Gamma_H$ is (quasi-)convex in $\Gamma$. 
\end{enumerate}
\end{definition}

Set the following hypotheses:
\begin{hypotheses}\label{H: gen fine G-graph}
Let $(G,\mc{P})$ be a relatively hyperbolic pair and suppose that $G$ acts on a connected hyperbolic graph $\Gamma$ so that $\Gamma$ is generalized fine with respect to the action of $(G,\mc{P})$.

For $P\in\mc{P}$ and $g\in G$, let $\Gamma_{P^g}$ be the sub-graph stabilized by $P^g$.
Let $H\le G$ and let $\Gamma_H$ be a quasi-convex cocompact core for $H$ in $\Gamma$. If $\Gamma$ is not fine (only generalized fine), we make the following additional assumption: 
\begin{equation}\label{item:assump} 
\mbox{If $\Gamma_H\cap \Gamma_{P^g} \ne\emptyset$ then $|H\cap P^g| =\infty$.} \tag{$\dagger$}
\end{equation}
\end{hypotheses}

Here is a rough outline of the proof of Theorem~\ref{Thm: rqc criterion}: we prove that the action of $H$ on $\Gamma_H$ implies $H$ is hyperbolic relative to a finite collection of vertex stabilizers $\mc{D}$. Then $H$ admits a geometrically finite convergence group action on the Bowditch boundary of $(H,\mc{D})$. 
We then show that the inclusion $\Gamma_H\to \Gamma$ induces an equivariant inclusion on Bowditch boundaries whose image is the limit set of $H$ so that the induced action of $H$ on its limit set in the Bowditch boundary of $(G,\mc{P})$ is a geometrically finite convergence group action. 
We now recall Yaman's dynamical characterization \cite{Yaman} of relative hyperbolicity:
\begin{definition}[{As stated in \cite[Definition 3.1 (RH-1)]{Hruska2010}}]\label{D: dynamic rel hyp}
Suppose $(G,\mc{P})$ has a geometrically finite convergence group action on a compact, metrizable space $M$, Then $(G,\mc{P})$ is a {relatively hyperbolic pair}.
\end{definition}
Yaman also proves (see, for example, \cite[Theorem 5.2]{Hruska2010}) that the space $M$ from Definition~\ref{D: dynamic rel hyp} is equivariantly homeomorphic to the Bowditch boundary of the pair $(G,\mc{P})$.

For details about geometrically finite actions, see \cite[Section 3.1]{Hruska2010}. If $H\le G$, recall that the \textbf{limit set of $H$} in $M$, denoted $\Lambda H$, is the smallest closed $H$--invariant subset of $M$.  
We use Definition~\ref{D: dynamic rel hyp} in conjunction with the following definition for relative quasi-convexity:
\begin{definition}[{\cite[Definition 6.2 (QC-1)]{Hruska2010}}]\label{QC1}
Let $(G,\mc{P})$ be a relatively hyperbolic group that acts on a compact metrizable space as a geometrically finite convergence group. A subgroup $H\le G$ is \textbf{relatively quasi-convex} if the induced convergence action of $H$ on the limit set $\Lambda H\subseteq M$ is geometrically finite. 
\end{definition}

\begin{proposition}
Assume Hypotheses~\ref{H: gen fine G-graph}.
Let $\mc{D}$ be a (finite) collection of $H$--conjugacy representatives of vertex stabilizers for the action of $H$ on $\Gamma_H$.
Then $(H,\mc{D})$ is a relatively hyperbolic pair. 
\end{proposition}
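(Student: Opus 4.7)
The plan is to verify Bowditch's characterization of relative hyperbolicity (Definition~\ref{D: fine relhyp}) directly for the pair $(H,\mc{D})$ by constructing an appropriate fine hyperbolic graph on which $H$ acts. The construction uses the complete electrification $\Gamma'$ of $\Gamma$ with respect to the family $\{\Gamma_{P^g} : P\in\mc{P},\,g\in G\}$, which by Proposition~\ref{P: gen fine rel hyp} is a fine hyperbolic graph witnessing the relative hyperbolicity of $(G,\mc{P})$. Let $\sigma\co \Gamma\to \Gamma'$ denote the electrification map, and take $\Gamma'_H := \sigma(\Gamma_H)$ as the candidate fine hyperbolic graph for the action of $H$.

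The first step is to verify the finiteness conditions of Definition~\ref{D: fine relhyp} for $H\acts \Gamma'_H$. Cocompactness of $\Gamma'_H$ under $H$ is immediate from the $H$-cocompactness of $\Gamma_H$ and the equivariance of $\sigma$. Since every edge of $\Gamma'$ has finite stabilizer in $G$ (electrification collapses every infinite-stabilizer cell), every edge of $\Gamma'_H$ has finite stabilizer in $H$. Finiteness of the quotient $\leftQ{\Gamma'_H}{H}$ gives finitely many $H$-orbits of edges, and fineness of $\Gamma'_H$ is inherited as a subgraph of the fine graph $\Gamma'$. The hyperbolicity of $\Gamma'_H$ follows because $\sigma$ is a $G$-equivariant quasi-isometry by Proposition~\ref{P: electric qi generalized}, so the image of the quasi-convex subgraph $\Gamma_H\subseteq \Gamma$ is quasi-convex in the hyperbolic graph $\Gamma'$, and a connected quasi-convex subgraph of a hyperbolic graph is hyperbolic with its induced path metric.

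The last step is to identify the infinite vertex stabilizers of $H\acts \Gamma'_H$ with $\mc{D}$. The vertices of $\Gamma'_H$ are of two types: vertices of $\Gamma_H$ that lie in the stable part of $\Gamma$, and cone points $v_{P^g}$ for those $P^g$ with $\Gamma_H\cap \Gamma_{P^g}\ne\emptyset$. The former have finite stabilizer in $G$ (hence in $H$). For the latter, $\Stab_{\Gamma'}(v_{P^g})=P^g$, so $\Stab_H(v_{P^g})=H\cap P^g$; this subgroup is infinite by assumption~(\ref{item:assump}) (or automatic when $\Gamma$ is already fine and no electrification is needed). Cocompactness of $\Gamma_H$ ensures that only finitely many $H$-orbits of peripheral subgraphs $\Gamma_{P^g}$ meet $\Gamma_H$, yielding finitely many $H$-conjugacy classes of such $H\cap P^g$. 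The system $\mc{D}$ from the statement consists of the stabilizers $\Stab_H(v)$ for infinite-stabilizer vertices $v\in\Gamma_H$, which are commensurable to the $H\cap P^g$ above, so invoking the standard invariance of relative hyperbolicity under finite-index modification of peripherals completes the identification, and Definition~\ref{D: fine relhyp} gives that $(H,\mc{D})$ is relatively hyperbolic.

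The main obstacle is the hyperbolicity-and-quasi-convexity transfer through electrification: one must check carefully that $\Gamma'_H=\sigma(\Gamma_H)$ inherits quasi-convexity in $\Gamma'$ from quasi-convexity of $\Gamma_H$ in $\Gamma$, which hinges on the fact that a $\Gamma'$-geodesic between two points of $\Gamma'_H$ admits a de-electrification that fellow-travels a $\Gamma$-geodesic, and hence stays close to $\Gamma_H$. The assumption~(\ref{item:assump}) (or fineness of $\Gamma$) plays its essential role in the final step, guaranteeing that the cone points introduced into $\Gamma'_H$ have the infinite stabilizers needed for $\mc{D}$ to be the correct peripheral structure.
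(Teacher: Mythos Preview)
Your proof is correct and is essentially the paper's argument, just with the two key operations performed in the opposite order. The paper first checks that $\Gamma_H$ is itself generalized fine for the $H$--action (using~\eqref{item:assump} to see that the edges of $\Gamma_H$ with finite $H$--stabilizer are exactly those with finite $G$--stabilizer) and then invokes Proposition~\ref{P: gen fine rel hyp}, which electrifies; you instead electrify $\Gamma$ to $\Gamma'$ first and work directly with $\Gamma'_H=\sigma(\Gamma_H)$, but since Proposition~\ref{P: gen fine rel hyp} proves relative hyperbolicity precisely by passing to the electrification, the two routes coincide, and assumption~\eqref{item:assump} is exactly what ensures that ``restrict to $H$'' and ``electrify'' commute.
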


\begin{proof}
If $\Gamma$ is fine then $\Gamma_H$ is fine, so by Definition~\ref{D:  fine relhyp}, $(H,\mc{D})$ is a relatively hyperbolic pair. 

Otherwise, let $\Gamma_0$ be the stable part of $\Gamma$. 
Since $\Gamma_H\cap \Gamma_{P^g}\ne \emptyset$ implies $H\cap P_g$ is infinite by \eqref{item:assump}, the edges of $\Gamma_H$ with finite stabilizer in $H$ are precisely those that have finite stabilizer in $G$. It is now straightforward to verifry that $\Gamma_H$ is generalized fine with respect to the $H$--action. 
By Proposition~\ref{P: gen fine rel hyp}, $(H,\mc{D})$ is a relatively hyperbolic pair. 
\end{proof}

For clarity and completeness, we repeat Bowditch's construction of the Bowditch boundary from a fine hyperbolic graph: 
\begin{definition}[{\cite[Section 9]{BowditchRH}}]\label{D: fine boundary}
Let $(G,\mc{P})$ be a relatively hyperbolic pair and suppose that $\Gamma$ is a graph that is generalized fine with respect to the action of $(G,\mc{P})$. 
Let $\Gamma'$ be a complete electrification of $\Gamma$ with respect to the $\Gamma_{P^g}$ and for all $P\in\mc{P}$ and $g\in G$, let $v_{P^g}$ be the vertex of $\Gamma'$ stabilized by $P^g$.
Let $\triangle \Gamma' = \partial\Gamma' \sqcup V(\Gamma')$ endowed with the following topology:
If $A$ is any finite subset of the vertices of $\Gamma'$ and $a\in \triangle \Gamma'$, define $N(a,A)$ to be the set of $b\in \triangle \Gamma'$ so that every geodesic from $a$ to $b$ avoids $A\,\setminus\, a$. A subset $U\subseteq \triangle \Gamma'$ is open if for every $a\in U$, there exists a finite set of vertices $A\subseteq V(\Gamma')$ so that $N(a,A)\subseteq U$.

Define $\Pi_{\Gamma'} = \{v_{P^g}:\,P\in\mc{P},\,g\in G\}$, the \textbf{peripheral points} of the Bowditch boundary. 

Let $\partial_B\Gamma' = \partial \Gamma'\cup \Pi_{\Gamma'}$ where $\partial \Gamma'$ is the visual boundary of the hyperbolic graph $\Gamma'$. We  refer to the points of $\partial\Gamma'$ as the \textbf{conical limit points} of the Bowditch boundary. 
The topology on $\partial_B\Gamma'$ is the subspace topology induced by the topology on $\triangle \Gamma'$.   
\end{definition}

\begin{remark}
In this section, we explicitly use the notation $\partial_{B}\Gamma'$ to denote the construction of the Bowditch boundary of $(G,\mc{P})$ from the graph $\Gamma'$. Outside of Section~\ref{S: rqc technical}, we use the notation $\partial_{\mc{P}}G$ to refer to the Bowditch boundary of $G$ with respect to $\mc{P}$. 
\end{remark}

 Assuming Hypotheses~\ref{H: gen fine G-graph}, let $\Gamma_H'$ be the image of $\Gamma_H$ in $\Gamma'$. 
Following Definition~\ref{D: fine boundary}, we can define $\partial_B \Gamma_H'$ and $\partial_B \Gamma_H'$ embeds in $\partial_B \Gamma'$.
\begin{proposition}\label{P: subbdd closed}
Assuming Hypotheses~\ref{H: gen fine G-graph}, $\partial_B \Gamma_H'$ is closed in $\partial_B\Gamma'$. 
\end{proposition}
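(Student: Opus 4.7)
The plan is to show that the complement $\partial_B \Gamma' \setminus \partial_B \Gamma_H'$ is open in the Bowditch topology from Definition~\ref{D: fine boundary}. That is, for each point $a$ in this complement, I will produce a finite set $A \subseteq V(\Gamma')$ with $N(a, A) \cap \partial_B \Gamma_H' = \emptyset$; equivalently, every $b \in \partial_B \Gamma_H'$ admits some $(a,b)$-geodesic in $\Gamma'$ passing through $A \setminus \{a\}$.

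The first step is to verify that $\Gamma_H'$ is a quasi-convex connected subgraph of the fine hyperbolic graph $\Gamma'$. Since the electrification $\sigma \colon \Gamma \to \Gamma'$ is a $G$-equivariant quasi-isometry (Proposition~\ref{P: electric qi generalized}) and $\Gamma_H$ is quasi-convex in $\Gamma$, the image $\Gamma_H' = \sigma(\Gamma_H)$ is quasi-convex in $\Gamma'$, say with constant $K \ge 0$. The subgraph $\Gamma_H'$ is also $H$-cocompact, and edges of $\Gamma_H'$ have finite $H$-stabilizers, since such edges come from edges of $\Gamma_H$ outside every $\Gamma_{P^g}$, which have finite stabilizers in $G$.

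With this in hand, I would split into cases on the type of $a$. If $a = v_{P^g}$ is a peripheral point with $v_{P^g} \notin V(\Gamma_H')$, then hypothesis~\eqref{item:assump} or fineness of $\Gamma$ gives that $H \cap P^g$ is finite, so indeed $v_{P^g}$ is not a peripheral point of $\partial_B \Gamma_H'$. Any $\Gamma'$-geodesic from $v_{P^g}$ to a point of $\partial_B \Gamma_H'$ must enter the $K$-neighborhood of $\Gamma_H'$; I would take $A$ to be a suitable finite set of vertices containing all ``first-entry'' vertices of such geodesics, with finiteness guaranteed by the fineness of $\Gamma'$ applied to edges incident to $v_{P^g}$. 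If instead $a \in \partial \Gamma'$ is conical with $a \notin \partial \Gamma_H'$, then since $\Gamma_H'$ is quasi-convex the visual boundary $\partial \Gamma_H'$ is closed in $\partial \Gamma'$, and I would build $A$ as a finite barrier separating the direction of $a$ from $\Gamma_H'$, again using fineness of $\Gamma'$ to keep $A$ finite, and taking care to exclude the peripheral points of $\Gamma_H'$ that might lie close to $a$ in the Bowditch topology.

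The main obstacle is the peripheral case: producing a genuinely finite $A$ that blocks every geodesic from $v_{P^g}$ to each of the possibly uncountably many points of $\partial_B \Gamma_H'$. This will require a careful combination of fineness of $\Gamma'$ (to control the number of geodesic directions leaving $v_{P^g}$ that lead into the $K$-neighborhood of $\Gamma_H'$) with quasi-convexity of $\Gamma_H'$ (to force every such geodesic to enter $A$).
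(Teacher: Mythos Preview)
Your overall plan---show the complement is open by producing, for each $a\notin\partial_B\Gamma_H'$, a finite vertex set $A$ with $N(a,A)\cap\partial_B\Gamma_H'=\emptyset$, splitting into the conical and peripheral cases and leaning on quasi-convexity of $\Gamma_H'$ together with fineness of $\Gamma'$---is exactly the paper's strategy. What is missing is the actual construction of $A$, and your sketched candidates (``first-entry vertices into the $K$--neighbourhood'' in the peripheral case, an unspecified ``finite barrier'' in the conical case) do not obviously yield finite sets: the $K$--neighbourhood of $\Gamma_H'$ has infinitely many frontier vertices, and fineness applied to edges at $v_{P^g}$ does not by itself bound how many of them a geodesic can first meet.

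The paper's key device, which you have not identified, is to anchor everything to a single reference geodesic. Fix $h_0\in\Gamma_H'$ and a geodesic $\tau$ from $h_0$ to $a$; choose two vertices $y_0,y_1$ on $\tau$ (far from $\Gamma_H'$ when $a$ is conical, with $y_0=a$ when $a$ is peripheral) a controlled distance apart, and take $A$ to be the set of all vertices lying on \emph{arcs of bounded length} between $y_0$ and $y_1$. Finiteness of $A$ then comes from Bowditch's characterisation of fineness (his Proposition~2.1(F2): there are only finitely many arcs of a given length between two fixed vertices), not from anything about the neighbourhood of $\Gamma_H'$. A thin-triangle/thin-quadrilateral argument then shows that any geodesic from $a$ to a point of $\partial_B\Gamma_H'$ must pass within $\delta$ of both $y_0$ and $y_1$, hence contributes a vertex to some short arc between them and so meets $A$. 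Incidentally, the conical case is the longer of the two in the paper, not the peripheral one; your assessment of where the difficulty lies is reversed.
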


We prove Proposition~\ref{P: subbdd closed} by showing that the complement of $\partial_B \Gamma_H'$ is open in $\partial_B \Gamma'$. 
Specifically, if $y\in \partial_B\Gamma'\,\setminus\, \partial_B\Gamma_H'$ we find an open neighborhood of $y$ that does not contain any points of $\partial_B\Gamma_H'$. 
For the topology introduced in Definition~\ref{D: fine boundary}, it suffices to prove that there exists a finite set of vertices that any geodesic from $y$ to a point in $\partial_B\Gamma_H'$ must pass through.

\begin{proof}
Since $\Gamma_H$ is convex in $\Gamma$, $\Gamma'_H$ is $s$--quasi-convex in $\Gamma'$ for some $s\ge 0$. 
Set $\delta>1$ so that $\Gamma_H'$ has $\delta$--thin triangles.

Let $y\in \partial_B \Gamma'\,\setminus\,\partial_B \Gamma_H'$.
To prove that $\partial_B\Gamma' \,\setminus\, \partial_B\Gamma_H'$ is open, we find a finite $A_y\subseteq V_\Gamma$ so that if $h\in \partial_B \Gamma_H'$, any geodesic from $y$ to $h$ passes through $A_y$. If so, then $N(y,A_y)\subseteq \partial_B\Gamma'\,\setminus\, \partial_B \Gamma_H'$. We now fix a geodesic $\gamma$ from $y$ to some $h\in \partial_B\Gamma_H'$ and split the proof into two cases depending on whether $y$ is a conical limit point or a peripheral point.

\begin{figure}
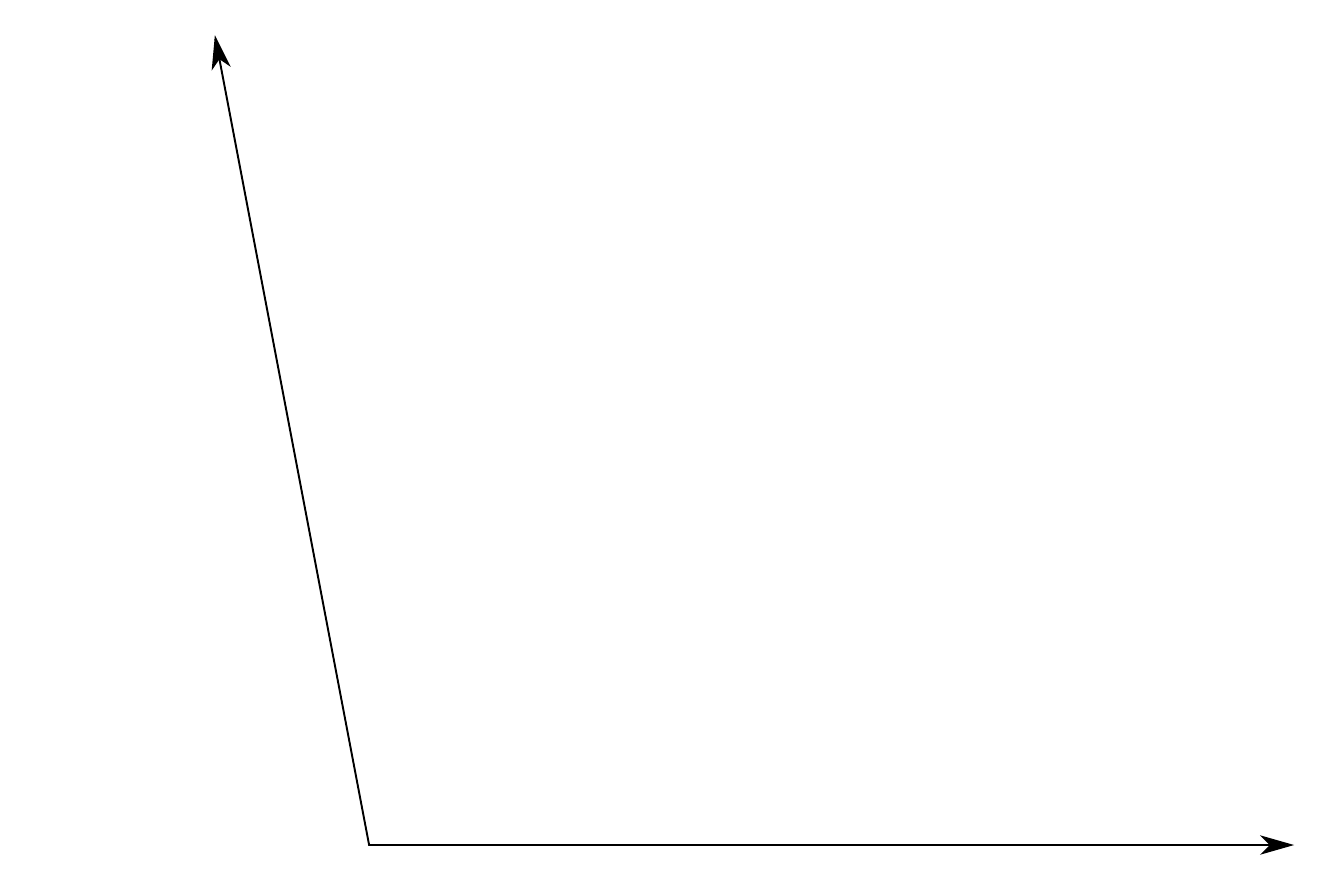
\caption{The situation in Proposition~\ref{P: subbdd closed} in Case 1 where $h\notin \Pi_{\Gamma'}$.}
\end{figure}

\textbf{Case 1: $y\in \partial\Gamma'$.} 
 Fix a base point $h_0\in \Gamma_H'$. 
Let $\tau$ be a geodesic ray from $h_0$ to $y$ and choose a vertex $y_0\in \tau$ so that $d(y_0,\Gamma_H')>2s+100\delta$.
Choose a second vertex $y_1\in V(\Gamma')$ on $\tau$ so that $9\delta<d(y_0,y_1) <10\delta$ and $y_0$ lies between $y_0$ and $h_0$.

Let 
\[A_y = \{v\in V(\Gamma'):\, v \text{ lies on an arc from $y_0$ to $y_1$ of length at most $22\delta$}\}\]
By \cite[Proposition 2.1 (F2)]{BowditchRH} and the fineness of $\Gamma'$, there are only finitely many arcs from $y_0$ to $y_1$ of length at most $22\delta$, so $A_y$ is finite.
We show that $\gamma$ intersects $A_y$.

We claim that $d(y_0,\gamma),d(y_1,\gamma)<3\delta$. 
First suppose $h\in \partial \Gamma'$, so we can parameterize $\gamma:(-\infty,\infty)\to \Gamma'$ where $\lim_{t\to \infty}\gamma(t) = y$ and $\lim_{t\to -\infty} \gamma(t) = h$. 
There exists $M$ so that for $t>0$ with $t$ sufficiently large, we have both $d(\gamma(t),\tau)<M$ and $d(\gamma(-t),\Gamma_H') <M$. 
Then there are geodesics $\eta_t$ whose endpoints are $x_{t,\tau}\in \tau$ and $x_{t,h}\in\Gamma_h'$ with $d(x_\tau,\gamma)<M$ and $d(x_h,\gamma)<M$. 
We claim that there are $x_{t,0},x_{t,1}\in \eta_t$ so that $d(y_0,x_{t,0})<\delta$ and $d(y_1,x_{t,1})<\delta$.
Indeed, when $t$ is large, there is a geodesic triangle with vertices $h_0,x_{t,\tau},x_{t,h}$ so that one side is $\eta_t$, one side lies in $\tau$ and the other side lies in $\mc{N}_s(\Gamma_H')$. 
Since triangles are $\delta$--thin and $d(y_0,\Gamma'_H),d(y_1,\Gamma'_H)>2s+90\delta$, $d(y_0,\eta_t),d(y_1,\eta_t)<\delta$.

Again, for large enough $t>0$, each of the quantities
\[
d(x_{t,\tau},x_{t,0}), \quad d(x_{t,\tau},x_{t,1}), \quad d(x_{t,h},x_{t,0}), \quad d(x_{t,h},x_{t,1})
\]
can be made arbitrarily large.  In particular, they can all be made to exceed the constant $M+2\delta$. 
A standard hyperbolic geometry argument using a $2\delta$-slim quadrilateral then implies that $d(x_{t,0},\gamma),d(x_{t,1},\gamma)<2\delta$. 
Then for appropriately large $T>0$, we have points $z_{T,0},z_{T,1}\in\gamma$ so that $d(z_{T,0},x_{T,0}),d(z_{T,1},x_{T,1})<2\delta$ which implies that $d(y_0,z_{T,0})<3\delta$ and $d(y_1,z_{T,1})<3\delta$. 

In the case that $h\in\Pi_{\Gamma'},$ a similar argument where $x_{t,h}$ is replaced by $h$ proves the claim that $d(y_0,\gamma),d(y_1,\gamma)<2\delta$ and that we can choose $z_{T,0},z_{T,i}$ in $\gamma$ so that $d(y_i,z_{T,i})<2\delta$ for $i=0,1$.

Since $9\delta<d(y_0,y_1)<10\delta$, then $3\delta<d(z_{T,1},z_{T,2})<16\delta$ by the triangle inequality.
Construct an arc $\sigma$ consisting of:
\begin{itemize}
\item a path of length at most $3\delta$ from $y_0$ to $z_{T,0}$,
\item a sub-path of $\gamma$ from $z_{T,0}$ to $z_{T,1}$ whose length is more than $1<3\delta$ but has length at most $16\delta$,
\item a path of length at most $3\delta$ from $y_1$ to $z_{T,1}$.
\end{itemize} 
The sub-path of $\gamma$ is long enough that it contains a vertex of $\Gamma'$.
Therefore, $\sigma$ is a path between $y_0$ and $y_1$ with length at most $22\delta$ and intersects $\gamma$ in a vertex.

\textbf{Case 2: $y\in \Pi_{\Gamma'}$.} 
Fix a base point $h_0\in\Gamma_H'$ and let $\tau$ be a geodesic from $h_0$ to $y$. 
Let $y_0 = y$. Let $y_1\in V(\Gamma')$ be a vertex on $\tau$ such that $\delta< d(y_0,y_1)< 2\delta$ or if no such vertex exists, set $y_1=h_0$. 
Let 
\[A_y = \{v\in V(\Gamma'):\,\text{$v$ lies on a path of length at most $10\delta$ from $y_0$ to $y_1$ }\}.\]
As in the proof of Case 1, $A_y$ is finite by \cite[Proposition 2.1 (F2)]{BowditchRH}.
Let $h\in \partial_B \Gamma_H'$ and suppose $\gamma$ is a geodesic from $y$ to $h$. 
Let $z\in \gamma$ be a vertex of $\Gamma_H'$ so that $\delta<d(z,y)<2\delta$, or if no such vertex exists, then $h\in\Pi_{\Gamma'}$ and we can set $z=h$ with $0<d(z,y)<\delta$. 

Consider a geodesic triangle with vertices $z,y,h_0$ and let $\eta$ be the side joining $z$ to $h_0$. 
Since $d(y,z)<2\delta$ there must exist some point $x\in \eta$ and $y_2\in\tau$ with $d(x,y_2)<\delta$ and $d(x,z)<2\delta$ by $\delta$--thinness of geodesic triangles. 
Then $d(y,y_2)<5\delta$ and $\tau$ is geodesic, so $d(y_2,y_1)<5\delta$. 
Therefore, there is a path from $y_0$ to $y_1$ of length at most $10\delta$ that intersects $\gamma$ in the vertex $z$, and so $\gamma\cap A_y\ne\emptyset$. 

We have showed that for all $y\in \partial_B\Gamma'\,\setminus\, \partial_B\Gamma_H'$, there exists a finite collection of vertices $A_y$ so that $N(y,A_y)\subseteq \partial_B\Gamma'\,\setminus\, \partial_B\Gamma'_H$. Thus $\partial_B\Gamma'\,\setminus\, \partial_B\Gamma'_H$ is open in $\partial_B\Gamma'$. 
\end{proof}


\begin{proposition}
The limit set of $H$, $\Lambda H$, in $\partial_B\Gamma'$ is $\partial_B\Gamma_H'$. 
\end{proposition}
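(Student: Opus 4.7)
The plan is to prove the two inclusions $\Lambda H \subseteq \partial_B\Gamma_H'$ and $\partial_B\Gamma_H'\subseteq \Lambda H$ separately.

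The first inclusion is essentially immediate from the preceding Proposition~\ref{P: subbdd closed}: that result shows $\partial_B\Gamma_H'$ is closed in $\partial_B\Gamma'$, and $H$-invariance is inherited from the $H$-invariant sub-graph $\Gamma_H'$. Assuming $H$ is infinite (the finite case makes both sides empty), $\partial_B\Gamma_H'$ is a nonempty closed $H$-invariant subset of $\partial_B\Gamma'$, and the minimality of $\Lambda H$ yields $\Lambda H \subseteq \partial_B\Gamma_H'$.

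For the reverse inclusion I would use the decomposition $\partial_B\Gamma_H' = \partial\Gamma_H' \sqcup \Pi_{\Gamma_H'}$ and handle peripheral and conical points separately. A peripheral point $p\in \Pi_{\Gamma_H'}$ is a vertex of $\Gamma_H$ whose $H$-stabilizer $H_p$ is infinite; this infinite subgroup of $H$ also fixes $p$ as a point of $\partial_B\Gamma'$, so $p$ is a parabolic fixed point of $H$ and lies in $\Lambda H$. For a conical limit point $p\in \partial\Gamma_H'$, fix a basepoint $x_0\in \Gamma_H'$ and a geodesic ray $\rho$ in $\Gamma_H'$ with $\rho(n)\to p$ in $\partial\Gamma_H'$. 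Cocompactness of $H\acts\Gamma_H$ (and hence of its electrification $\Gamma_H'$) provides $h_n \in H$ and a compact set $K \subseteq \Gamma_H'$ with $h_n^{-1}\rho(n)\in K$, so $d_{\Gamma'}(h_n x_0,\rho(n))$ stays uniformly bounded. Because $\Gamma_H$ is quasi-convex in $\Gamma$, its electrification $\Gamma_H'$ is quasi-convex in $\Gamma'$ (using Proposition~\ref{P: electric qi generalized}), making the inclusion $\Gamma_H'\hookrightarrow \Gamma'$ a quasi-isometric embedding. It follows that $\rho(n)$, and therefore $h_n x_0$, converges to $p$ in $\partial_B\Gamma'$, placing $p\in \Lambda H$.

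\emph{Main obstacle.} The delicate step is transferring visual convergence $\rho(n)\to p$ from the sub-graph $\Gamma_H'$ into convergence in the Bowditch topology on $\partial_B\Gamma'$ from Definition~\ref{D: fine boundary}. Concretely, for every finite $A\subseteq V(\Gamma')$ one must argue that eventually every geodesic from $\rho(n)$ to $p$ avoids $A\setminus\{p\}$; this combines quasi-convexity of $\Gamma_H'$ with the fineness of the electrified graph $\Gamma'$ supplied by Proposition~\ref{P: gen fine rel hyp}. Once this is in place, the peripheral case is comparatively routine: assumption \eqref{item:assump} (or fineness of $\Gamma$) already ensures that peripheral points of $\partial_B\Gamma_H'$ correspond to vertices of infinite $H$-stabilizer that map to peripheral points of $\partial_B\Gamma'$.
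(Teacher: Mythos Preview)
Your argument for $\Lambda H \subseteq \partial_B\Gamma_H'$ via closedness, $H$--invariance, and minimality is exactly the paper's argument for that inclusion. For the reverse inclusion $\partial_B\Gamma_H' \subseteq \Lambda H$, however, the paper simply writes ``Immediately, $\partial_B\Gamma_H' \subseteq \Lambda H$'' with no further justification, whereas you split into peripheral and conical cases and argue each by hand.

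Your detailed treatment is correct, and the step you flag as the ``main obstacle'' is the genuine content: one needs that the inclusion $\triangle\Gamma_H' \hookrightarrow \triangle\Gamma'$ is continuous, so that accumulation points of an $H$--orbit computed in $\triangle\Gamma_H'$ remain accumulation points in $\triangle\Gamma'$. The paper's one-word justification is presumably leaning on the fact, established just before, that $(H,\mc{D})$ is itself relatively hyperbolic with Bowditch boundary $\partial_B\Gamma_H'$; Bowditch's theory then gives that $H$ acts on $\partial_B\Gamma_H'$ as a minimal convergence group, so every point of $\partial_B\Gamma_H'$ is already a limit point of $H$ there, and hence (granted continuity of the boundary embedding) also in $\partial_B\Gamma'$. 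Your approach trades this appeal to the general theory for an explicit cocompactness-plus-quasiconvexity argument; it is longer but more self-contained, and it makes visible precisely the topological compatibility that the paper's ``immediately'' suppresses.
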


\begin{proof}
Immediately, $\partial_B \Gamma'_H\subseteq \Lambda H$. 
By Proposition~\ref{P: subbdd closed}, $\partial_B\Gamma'_H$ is closed and $H$--invariant, so $\Lambda H\subseteq \partial_B \Gamma'_H$ because the limit set of $H$ is the smallest closed $H$--invariant subset of $\partial M$. 
\end{proof}

\qccriterion*

\begin{proof}
The action of $H$ on the fine hyperbolic sub-graph $\Gamma_H'$ constructed by electrifying with respect to the $\Gamma_{P^g}$ shows that $H$ is hyperbolic relative to the infinite vertex stabilizers. By \cite[Theorem 1.1]{Hruska2010}, the induced convergence group action of $H$ on $\partial_B \Gamma'_H$ is geometrically finite. 
Since $\partial_B\Gamma_H' = \Lambda H$, the subgroup $H$ has a geometrically finite convergence group action on $\Lambda H$ and hence $H$ satisfies Definition~\ref{QC1} for relative quasi-convexity. 
\end{proof}

We can also rephrase Theorem~\ref{Thm: rqc criterion} as a criterion for relative quasi-convexity in fine hyperbolic graphs. This special case recovers the following result of Martinez-Pedroza and Wise:

\finecriterion*

\begin{proof}[Proof Sketch]
One direction follows immediately from Theorem~\ref{Thm: rqc criterion}.
If $H$ is relatively quasi-convex, the join of $\Lambda H$ in $\Gamma$ provides the quasi-convex core for $H$, see \cite[end of Section 5]{BowditchRH}.
\end{proof}

\begin{remark}
For a relatively hyperbolic pair $(G,\mc{P})$, we use the notation $\partial_{\mc{P}}G$ to denote the Bowditch Boundary of $G$ with respect to $\mc{P}$. 
When $\Gamma'$ is a fine hyperbolic graph that witnesses the relative hyperbolicity of $(G,\mc{P})$, we henceforth conflate $\partial_{\mc{P}}G$ with $\partial_B \Gamma'$. 
\end{remark}

We can also prove that hyperplane stabilizers are relatively quasi-convex:
\begin{cor}\label{C: hypstab rel qc}
Let $(G,\mc{P})$ act relatively geometrically on a CAT(0) cube complex $\tilde{X}$. Let $H$ be the stabilizer of a hyperplane $W$ of $\tilde{X}$. 
Then $H$ is relatively quasi-convex in $(G,\mc{P})$. 
\end{cor}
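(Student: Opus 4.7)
My plan is to reduce this to the fine-hyperbolic-graph criterion in Corollary~\ref{C: fine hyperbolic rqc criterion}, using the machinery already set up in Section~\ref{S: background}, specifically Proposition~\ref{P: qc invt subraph}. The key geometric input is that the hyperplane $W$ has a carrier $N(W)$ in $\tilde{X}$ which is a convex subcomplex, and $H = \Stab_G(W)$ acts on it cocompactly.

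First I would recall that in a CAT(0) cube complex the carrier $N(W)$ of any hyperplane $W$ is a convex subcomplex, and $\Stab_G(W)$ acts on $N(W)$ with the same cocompactness behavior as its action on $W$. Since the relatively geometric action of $(G,\mc{P})$ on $\tilde{X}$ is cocompact, there are only finitely many $G$--orbits of hyperplanes, so for any fixed $W$ the stabilizer $H = \Stab_G(W)$ acts cocompactly on $W$ and hence on its carrier $N(W)$. In particular, $\tilde{Y} := N(W)$ is a convex $H$--cocompact subcomplex of $\tilde{X}$.

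Next I would apply Proposition~\ref{P: qc invt subraph} directly to the subgroup $H$ and the convex $H$--cocompact subcomplex $\tilde{Y} = N(W)$. This yields a fine hyperbolic graph $\Gamma$ on which $(G,\mc{P})$ acts with finite edge stabilizers, finitely many orbits, and each $P\in\mc{P}$ fixing a vertex, together with a connected $H$--cocompact quasi-convex subgraph $\Gamma_H \subseteq \Gamma$. In other words, $H$ has a quasi-convex cocompact core in the fine hyperbolic graph $\Gamma$ witnessing the relative hyperbolicity of $(G,\mc{P})$.

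Finally, I would invoke Corollary~\ref{C: fine hyperbolic rqc criterion} (the Martinez-Pedroza--Wise criterion) to conclude that $H$ is relatively quasi-convex in $(G,\mc{P})$. There is no serious obstacle here once the convexity and cocompactness of the carrier are in hand; the only point deserving a line of justification is the cocompactness of the $H$--action on $N(W)$, which is the standard consequence of the cocompact $G$--action on $\tilde{X}$ combined with the finite number of $G$--orbits of hyperplanes. Notably, this route avoids having to verify assumption \eqref{item:assump} for $\tilde{X}^{(1)}$ directly, since we are working inside a genuinely fine graph.
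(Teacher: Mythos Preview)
Your argument is correct. The carrier $N(W)$ is convex, $H=\Stab_G(W)$ stabilizes it, and $H$ acts cocompactly on $N(W)$ (your justification is a bit terse but the conclusion is standard: pairs consisting of a cube of $\tilde{X}$ together with an edge in it fall into finitely many $G$--orbits, and any $g\in G$ taking one edge dual to $W$ to another must lie in $H$). Proposition~\ref{P: qc invt subraph} then produces a quasi-convex cocompact core for $H$ in a fine hyperbolic graph, and Corollary~\ref{C: fine hyperbolic rqc criterion} finishes the job.

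The paper takes a different route. It subdivides $\tilde{X}$ once so that $W$ itself becomes a convex subcomplex, works in the \emph{generalized} fine graph $\tilde{X}_W^{(1)}$, and applies Theorem~\ref{Thm: rqc criterion} directly there. This requires verifying condition~\eqref{item:assump}: if $\Gamma_{P^g}$ meets $W\cap\tilde{X}_W^{(1)}$ then some finite-index subgroup of $P^g$ stabilizes an edge dual to $W$, whence $P^g\le H$. Your approach sidesteps both the subdivision and the check of~\eqref{item:assump} by first electrifying down to a genuinely fine graph via Proposition~\ref{P: qc invt subraph}; the paper's approach instead showcases the generalized fine criterion in situ and makes explicit the interaction between $\Gamma_{P^g}$ and the hyperplane, which is thematically closer to the later separation arguments in Section~\ref{S: relgeom separation}.
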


\begin{proof}
Subdivide $\tilde{X}$ cubically once to a complex $\tilde{X}_W$ so that $W$ is a sub-complex. The action of $(G,\mc{P})$ on $\tilde{X}_W$ is still relatively geometric.  
Recall from Example~\ref{E: relgeom is genfine} that $\tilde{X}_W\oskel$ is  generalized fine with respect to the action of $(G,\mc{P})$. 
If $\Gamma_{P^g}$ intersects $W\cap \tilde{X}_W\oskel$, then a finite index subgroup of $P^g$ stabilizes an edge of $\tilde{X}$ dual to $W$. 
Hence $P^g\le \Stab_G (W)$.  
Since $W$ is convex and cocompact in $\tilde{X}_W$, $W \cap \tilde{X}_W\oskel$ is an $H$--invariant $H$--cocompact connected convex sub-graph of $\tilde{X}_W\oskel$. The relative quasi-convexity of $H$ now follows from Theorem~\ref{Thm: rqc criterion}.
\end{proof}

	\section{A Separation Criterion for the Bowditch Boundary}
	\label{S: separation criterion}
	
	A construction of Sageev \cite{Sageev95} shows that group actions on CAT(0) cube complexes arise naturally from groups with collections of `codimension-1' subgroups. Building on work of Bergeron and Wise \cite{BergeronWise} for hyperbolic cubulations, the first and second author gave a boundary criterion \cite[Theorem 2.6]{RelCannon} for relatively geometric actions that guarantees the existence of a relatively geometric action of a relatively hyperbolic pair $(G,\mc{P})$ on a CAT(0) cube complex whenever $G$ contains a sufficient collection of full relatively quasi-convex subgroups that separate points in the Bowditch boundary. 
	The main theorem of this section, Theorem~\ref{T: separation criterion} helps to show that stabilizers of quasi-convex cores in fine hyperbolic graphs that exhibit `hyperplane like' behavior and provide a source of codimension-1 subgroups that may be used with \cite[Theorem 2.6]{RelCannon}.

	\subsection{Hypersets and Hypercarriers}
	
	Let $\Gamma$ be a graph.
	A \textbf{hyperset} $L$ in $\Gamma$ is a collection of edge midpoints and vertices of $\Gamma$ so that $\Gamma\,\setminus\, L$ has two components. A \textbf{(hyperset) carrier} $J$ is the minimal sub-graph of $\Gamma$ containing $L$ that has the following property: if $v_1,v_2\in J$ and $v_1,v_2$ are joined by an edge in $\Gamma$, then $J$ contains the edge between $v_1,v_2$.
	
	Hypersets and carriers arise naturally in the one-skeleton of a CAT(0) cube complex.  We will see that they are particularly helpful in the setting of relatively geometric actions:
	\begin{example}\label{hypersets in CCC}
	Let $(G,\mc{P})$ act relatively geometrically on a CAT(0) cube complex $\tilde{C}$, and let $W$ be a hyperplane. 
Then $L = W\cap \tilde{C}^{(1)}$ is a hyperset. 
A hyperset carrier $J$ for $L$ is the intersection of the hyperplane carrier of $W$ with $\tilde{C}^{(1)}$.  	
	\end{example}
	In this situation, we refer to $L$ as the \textbf{hyperset associated to $W$} and $J$ as \textbf{the (hyper)carrier (of the hyperset associated to $W$)}.
	
	
	
	We observe the following useful fact in the setting of generalized fine hyperbolic graphs:
	\begin{obs}
	\label{P: collapsing hypersets}
	Let $\Sigma$ be generalized fine with respect to the action of $(K,\mc{D})$ and let $\Sigma_{D^k}$ be the sub-graph of cells whose stabilizer is commensurable to $D^k$ for $D\in\mc{D}$, $k\in K$. 
	Let $\sigma:\Sigma\to \Gamma$ be the electrification map that collapses the $\Sigma_{D^k}$. 
	Let $S = \cup \{\Sigma_{D^k}:\, \Sigma_{D^k}\cap L \ne \emptyset\}$. 
	If $L$ is a hyperset and $J$ is a quasi-convex hyperset carrier, then $\sigma(L)$ is a hyperset in $\Gamma$, $\sigma(J)$ is a hyperset carrier and the components of $\Gamma\,\setminus\, \sigma(L)$ are images of the components of $\Gamma\,\setminus\, (S\cup L)$.   
	\end{obs}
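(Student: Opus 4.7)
The observation makes three claims about the electrification $\sigma$: that $\sigma(L)$ is a hyperset in $\Gamma$, that $\sigma(J)$ is a hyperset carrier, and that the components of $\Gamma\setminus\sigma(L)$ are $\sigma$-images of the components of $\Sigma\setminus(S\cup L)$ (reading the second $\Gamma$ in the statement as $\Sigma$). The plan is to treat $\sigma$ as a quotient map with connected fibers and transfer the separation structure from $\Sigma$ to $\Gamma$.

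First I would compute the saturation $\sigma^{-1}(\sigma(L))=L\cup S$. Since $\sigma$ is the identity on the stable part of $\Sigma$ and collapses each peripheral subgraph $\Sigma_{D^k}$ to a single vertex $v_{D^k}$, the vertex $v_{D^k}$ lies in $\sigma(L)$ precisely when $\Sigma_{D^k}\cap L\ne\emptyset$, i.e.\ exactly when $\Sigma_{D^k}\subseteq S$. Restricting $\sigma$ to the open saturated set $\Sigma\setminus(L\cup S)$ then gives a quotient map onto $\Gamma\setminus\sigma(L)$ whose non-trivial equivalence classes are exactly the peripheral subgraphs $\Sigma_{D^k}$ disjoint from $L$. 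Each such class is connected and entirely contained in $\Sigma\setminus(L\cup S)$, so the quotient preserves connected components: each component of $\Sigma\setminus(S\cup L)$ has connected image, distinct components yield disjoint images, and every component of $\Gamma\setminus\sigma(L)$ arises this way. This establishes the third assertion.

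For the hyperset claim, write $\Sigma\setminus L=C_1\sqcup C_2$. Any peripheral subgraph $\Sigma_{D^k}\notin S$ is connected and disjoint from $L$, hence contained entirely in one $C_i$; therefore $\Sigma\setminus(L\cup S)=(C_1\setminus S)\sqcup(C_2\setminus S)$, and by the preceding paragraph $\sigma(C_1\setminus S)$ and $\sigma(C_2\setminus S)$ are disjoint and together cover $\Gamma\setminus\sigma(L)$. The remaining technical point --- and the main obstacle of the proof --- is to show that each $C_i\setminus S$ is connected. For this I would use that every $\Sigma_{D^k}\in S$ is anchored to $L$ by its connectedness and non-empty intersection with $L$, together with the quasi-convex carrier $J$, to reroute any path in $C_i$ that enters some $\Sigma_{D^k}\in S$ along edges of $J\cap C_i$ adjacent to $\Sigma_{D^k}\cap L$; the output path then stays in $C_i\setminus S$.

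Finally, $\sigma(J)$ is a sub-graph of $\Gamma$ containing $\sigma(L)$. Its induced-subgraph property is verified by lifting: any edge $e'$ of $\Gamma$ between two vertices of $\sigma(J)$ lifts to an edge $e$ of $\Sigma$ whose endpoints can be taken to lie in $J$ (after replacing an endpoint, if necessary, by an equivalent vertex within a peripheral subgraph meeting $J$), so $e\subseteq J$ by the induced-subgraph property of $J$, giving $\sigma(e)=e'\subseteq\sigma(J)$. Minimality of $\sigma(J)$ among induced subgraphs containing $\sigma(L)$ is inherited from the minimality of $J$ by a similar lifting argument.
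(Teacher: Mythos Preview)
The paper records this statement as an Observation and supplies no proof; the authors evidently regard it as a direct unwinding of the electrification map. Your argument is therefore already more detailed than anything in the paper, and the core mechanism you isolate---computing the saturation $\sigma^{-1}(\sigma(L))=L\cup S$ and then using that a quotient with connected fibres induces a bijection on components---is precisely the right way to obtain the third assertion.

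That said, two steps in your write-up are not complete. First, the rerouting sketch for the connectedness of each $C_i\setminus S$ is where the real content lies, and as stated it does not go through: quasi-convexity of $J$ alone does not produce the detour you describe, and without further structure (for instance the two-sided carrier property introduced immediately after this Observation) one can arrange peripheral subgraphs $\Sigma_{D^k}\subseteq S$ whose removal genuinely disconnects $C_i$. The paper never confronts this because the Observation is only ever applied in the hyperplane setting, where that extra structure is present. Second, your lifting argument for the induced-subgraph property of $\sigma(J)$ has a gap: an edge $e'$ of $\Gamma$ between two vertices of $\sigma(J)$ lifts to an edge $e$ of $\Sigma$ with \emph{fixed} endpoints, and those endpoints may sit in a peripheral subgraph meeting $J$ without themselves belonging to $J$. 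You cannot ``replace an endpoint'' of an edge and still have an edge of $\Sigma$, so $e$ need not lie in $J$, and $\sigma(J)$ need not be the full induced subgraph on its vertex set.
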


\subsection{The separation criterion: }

We set the following assumptions for the remainder of this subsection:
\begin{hypotheses}\label{separation setup}
Let $(K,\mc{D})$ be a relatively hyperbolic pair and let $K$ act on a fine $\delta$--hyperbolic graph $\Gamma$ with the following properties:
\begin{itemize}
\item The action of $K$ is cocompact,
\item edge stabilizers are finite, and
\item each $D\in\mc{D}$ stabilizes a single vertex. 
\end{itemize}
Let $L$ be a hyperset with connected quasi-convex carrier $J$. 
\end{hypotheses}

Our goal is to decide whether two points in the Bowditch boundary $\partial_{\mc{D}}K$ lie in complementary components of the limit set of $\Stab_G(L)$. 
The hyperset $L$ separates $\Gamma$ into two complementary components, but it is not immediately apparent that the limit set $\Lambda \Stab_G(L)$ partitions the Bowditch boundary into multiple components with respect to the topology described in Definition~\ref{D: fine boundary}.

\begin{definition}
	With the setup in Hypotheses~\ref{separation setup}, $J$ has the \textbf{two-sided carrier property} if there exist connected quasi-convex subsets $J^+$ and $J^-$ so that 
	\[
		J^+\cap J^- \, \subseteq \,  L \,  \subseteq \,  J^+\cup J^-,
	\]
	where $J^+\cap J^- =: J$ is the hypercarrier of $L$.
	every path in $\Gamma$ between vertices in the two distinct components of $\Gamma\,\setminus\, L$ must intersect both $J^+$ and $J^-$ and if $v$ is a vertex of $J^-\cap J^+$ with infinite stabilizer, then $v\in \Lambda\Stab_K(L)$.  
	\end{definition}
	
	The two-sided carrier property arises naturally in our intended application to relatively geometric actions. 
	
	\begin{example}
	When $(K,\mc{D})$ acts relatively geometrically on a CAT(0) cube complex $\tilde{X}$, and $W$ is a hyperplane with associated hyperset $L$ and carrier $J$, there are two natural sides $J^+$ and $J^-$ of the carrier $J$. 
	Note that the two sides are slightly larger than the combinatorial hyperplanes on either side of $W$ because $L$ needs to be contained in their union.
	Recall that $\tilde{X}\oskel$ usually fails to be fine, but Proposition~\ref{P: electric is fine} implies that collapsing compact sub-graphs of $\tilde{X}\oskel$ yields a fine hyperbolic graph.  
	The images of $L$ and $J$ remain a hyperset and hypercarrier respectively, but $J^+$ and $J^-$ may have overlapping images. 
	As we will see, this may only happen at vertices that are already parabolic points in the limit set of the stabilizer of $W$. 
	\end{example}
	
	Recall that under Hypotheses~\ref{separation setup}, the points of the Bowditch boundary are either \textbf{conical limit points} that lie in $\partial \Gamma$, the visual boundary of $\Gamma$ or are \textbf{parabolic vertices} (also called \textbf{peripheral vertices}) of $\Gamma$, which are stabilized by maximal parabolics. 
	
\begin{definition}\label{D: separation}
Assuming Hypotheses~\ref{separation setup}, we say that $L$ \textbf{separates $x,y\in\partial_{\mc{D}}K$} if $x,y\notin \Lambda \Stab_K(L)$ and one of the following holds:
	\begin{itemize}
	\item $x,y$ are both conical limit points, and there exists some geodesic $\gamma:(-\infty,\infty)\to \Gamma$ with $\lim_{t\to \infty}\gamma(t) = x$ and $\lim_{t\to-\infty}\gamma(t) = y$ so that there exists $T>0$ so that for all $t_- < -T <0 <T<t_+$, $\gamma(t_+)$ and $\gamma(t_-)$ are in distinct components of $\Gamma\,\setminus\, L$, 
	\item $x$ is a parabolic vertex in $\Gamma$, $y$ is a conical limit point, and there exists some geodesic $\gamma:[0,\infty)\to \Gamma$ from $x=\gamma(0)$ to $y$ so that for $t$ sufficiently large, $\gamma(t)$ and $x$ are in distinct components of $\Gamma\,\setminus\, L$,
	\item $x,y$ are both parabolic vertices in $\Gamma$, and $x,y$ lie in distinct components of $\Gamma\,\setminus\, L$. 
	\end{itemize}
	In the first two cases, we say that $\gamma$ \textbf{witnesses that $L$ separates $x$ and $y$}. 
	\end{definition}


The remainder of this section is devoted to proving Theorem~\ref{T: separation criterion}:
\begin{theorem}\label{T: separation criterion}
Assume Hypotheses~\ref{separation setup} and assume the setup satisfies the two-sided carrier property. If $L$ separates $x,y\in \partial_{\mc{D}}K\,\setminus\, \Lambda\Stab_K(L)$, then there exists a subgroup ${K_L} \le \Stab_K(L)$ of index at most 2 so that $x,y$ are in ${K_L}$--distinct components of $\partial_{\mc{D}}K\,\setminus\, \Lambda {K_L}$. 
\end{theorem}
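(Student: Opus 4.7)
Define $K_L \le \Stab_K(L)$ to be the subgroup consisting of those elements of $\Stab_K(L)$ that preserve each of the two components of $\Gamma\setminus L$. Since the only other possibility is to swap the two sides, $[\Stab_K(L):K_L]\le 2$. Because $K_L$ has finite index in $\Stab_K(L)$, their limit sets agree, so $\Lambda K_L = \Lambda \Stab_K(L)$. The plan is to produce a $K_L$--equivariant partition
\[
\partial_{\mc{D}}K \setminus \Lambda K_L \;=\; U^+ \sqcup U^-
\]
into disjoint open subsets, with $x\in U^{\Phi(x)}$ and $y\in U^{\Phi(y)}$ and $\Phi(x)\neq \Phi(y)$. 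This will immediately give that $x$ and $y$ lie in $K_L$--distinct components.

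Let $\Gamma^+,\Gamma^-$ denote the closures of the two components of $\Gamma\setminus L$. I will define a \emph{side function} $\Phi \co \partial_{\mc{D}}K \setminus \Lambda K_L \to \{+,-\}$ as follows. For a parabolic vertex $v \notin \Lambda K_L$, the two-sided carrier hypothesis together with the assumption on vertices of $J^+\cap J^-$ with infinite stabilizer implies $v\notin L$, so $v$ lies in exactly one of $\Gamma^\pm \setminus L$, which determines $\Phi(v)$. For a conical limit point $z\notin \Lambda K_L$, fix a basepoint $x_0 \in J$ and a geodesic ray $\gamma$ from $x_0$ to $z$; since $z$ is not a limit point of the quasi-convex set $J = J^+\cap J^-$, the ray $\gamma$ returns to $J$ only boundedly often, so after some finite prefix it stays in one of $\Gamma^+$ or $\Gamma^-$, and $\Phi(z)$ is that side. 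Independence of the choice of $\gamma$ follows from $\delta$--hyperbolicity: two asymptotic rays stay within bounded Hausdorff distance, so if they ultimately landed on opposite sides they would each cross $L$ infinitely often, forcing $z\in\Lambda K_L$. The hypothesis that $L$ separates $x$ and $y$ directly witnesses that $\Phi(x)\neq \Phi(y)$, and $K_L$--equivariance of $\Phi$ is immediate from the definition of $K_L$.

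The main obstacle is showing that $\Phi$ is locally constant in the Bowditch topology of Definition~\ref{D: fine boundary}. Given $z$ with $\Phi(z) = +$, I must exhibit a \emph{finite} set $A_z \subseteq V(\Gamma)$ such that $N(z,A_z) \cap (\Phi^{-1}(-) \cup \Lambda K_L) = \emptyset$. The strategy mirrors the two-case argument in the proof of Proposition~\ref{P: subbdd closed}, but with $\Gamma'_H$ replaced by $J^-$ (respectively $J^+$). Roughly, if $w\in N(z,A_z)$ had $\Phi(w) = -$, any geodesic from $z$ to $w$ would have to cross $L$ and, by the two-sided carrier property, pass through both $J^+$ and $J^-$. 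Since $z$ is far from $J^-$ in a quantitative sense controlled by how far along $\gamma$ the ray leaves $J^+\cap J^-$, the $\delta$--thin triangles argument of Proposition~\ref{P: subbdd closed} (splitting into the two cases $z \in \partial\Gamma$ and $z$ a peripheral vertex) localizes the crossing point to a small neighborhood of a specific vertex on $\gamma$. Fineness of $\Gamma$ then bounds the finitely many candidate vertices, giving $A_z$. The case $w\in \Lambda K_L$ is handled analogously since any geodesic from $z$ to a point of $\Lambda K_L$ must approach $J^+\cap J^-$ and hence cross the same finite set.

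The tricky sub-case is when $z$ itself is a parabolic vertex adjacent (in $\Gamma$) to $L$ but not contained in $L$; here I would use that the stabilizer of $z$ intersects $\Stab_K(L)$ finitely (else $z\in\Lambda K_L$), so only finitely many edges from $z$ meet the $K_L$--orbit of $J$, and these can be absorbed into $A_z$. With local constancy established, $U^\pm := \Phi^{-1}(\pm)$ are disjoint open $K_L$--invariant subsets of $\partial_{\mc{D}}K \setminus \Lambda K_L$ with $x$ and $y$ on opposite sides, completing the proof.
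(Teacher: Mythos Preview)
Your approach is essentially the same as the paper's: define a side function on $\partial_{\mc{D}}K\setminus\Lambda\Stab_K(L)$, show it is locally constant in the Bowditch topology using fineness and quasi-convexity of $J^{\pm}$, and pass to an index--$\le 2$ subgroup preserving the sides. The paper packages the key finiteness step (your analogue of Proposition~\ref{P: subbdd closed}) as a stand-alone Lemma~\ref{L: coarse projection rel Bowditch}, which says: for any quasi-convex $J_0\subseteq J$ and any $z\notin J_0$ with $z\notin\Lambda\Stab_K(J)$, there is a finite vertex set $V_z$ that every geodesic from $z$ to $J_0$ must hit. The paper then applies this with $J_0=J$ when $z\notin J$, and with $J_0=J^-$ (say) when $z$ is a parabolic vertex in $J^+\setminus J^-$; this is exactly your ``replace $\Gamma_H'$ by $J^-$'' move.

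There is one genuine soft spot. In your ``tricky sub-case'' (parabolic $z$ adjacent to $L$ but not in $L$) you propose to argue that $\Stab_K(z)\cap\Stab_K(L)$ is finite and hence only finitely many edges from $z$ meet $J$. This does not follow: $J$ is already $K_L$--invariant, so its $K_L$--orbit is just $J$, and a parabolic vertex $z\in J$ may be incident to infinitely many edges of $J$ even when $\Stab_K(z)\cap K_L$ is finite (cocompactness only gives finitely many $\Stab_K(z)$--orbits of edges at $z$, not finitely many edges landing in a fixed quasi-convex subset). The correct fix is the one you already sketched two sentences earlier: since $z\notin\Lambda\Stab_K(L)$, the two-sided carrier property forces $z\in J^+\setminus J^-$ (up to relabeling), and every geodesic from $z$ to a point on the other side must pass through $J^-$; now run the Proposition~\ref{P: subbdd closed}--style argument with $J^-$ in place of $\Gamma_H'$. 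The paper's Lemma~\ref{L: coarse projection rel Bowditch} does exactly this, and its Case~2 treats with some care the situation where the parabolic vertex $z$ is within bounded distance of $J_0$. Once you drop the stabilizer-intersection heuristic and rely solely on the $J^{\pm}$ argument, your proof aligns with the paper's.
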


Recall from Definition~\ref{D: fine boundary} that if $x\in\partial_{\mc{D}}K$ and $A$ is a set of vertices in $\Gamma$, the set $N(x,A)$ consisting of $y\in \partial_{\mc{D}}K$ so that some geodesic from $x$ to $y$ avoids $A\,\setminus\, \{x\}$ is an open neighborhood of $x$ in $\partial_{\mc{D}}K$. The following lemma helps us control certain open neighborhoods of points in $\partial_{\mc{D}} K\,\setminus\, \Lambda \Stab_K(L)$. 
\begin{lemma}\label{L: coarse projection rel Bowditch}
Asssume Hypotheses~\ref{separation setup}. Let $J_0$ be a quasi-convex subset of $J$, and  
let $x\in \partial_\mc{D} K \,\setminus\, \Lambda \Stab_K(J)$ so that $x\notin J_0$. There exists a finite collection of vertices $V_x$ so that for any vertex $j\in J_0$, and any geodesic $\gamma_{x,j}$ between $x$ and $j$, the intersection $\gamma_{x,j} \cap (V_x\,\setminus\, x)$ is not empty.    
\end{lemma}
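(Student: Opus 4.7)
The plan is to adapt the strategy in the proof of Proposition~\ref{P: subbdd closed}, with the quasi-convex set $J_0$ playing the role that $\Gamma_H'$ played there. If $J_0 = \emptyset$ we simply take $V_x = \emptyset$, so assume $J_0 \ne \emptyset$ and fix a vertex $j_0 \in J_0$. Let $\sigma$ be a quasi-convexity constant for $J_0$ in $\Gamma$ and fix a geodesic $\tau$ from $j_0$ to $x$. The crucial preliminary step is to show that $\tau$ eventually leaves every fixed neighborhood of $J_0$: since $J_0 \subseteq J$ and $\Stab_K(J)$ acts cocompactly on $J$, the limit points of $J_0$ in $\partial_B\Gamma$ lie in $\Lambda\Stab_K(J)$, so the hypothesis $x \notin \Lambda\Stab_K(J)$ rules out $x$ being a limit of any neighborhood of $J_0$ along $\tau$.

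Now I would split into two cases exactly as in Proposition~\ref{P: subbdd closed}. In the conical case $x \in \partial\Gamma$, I would pick a vertex $y_0 \in \tau$ with $d(y_0, J_0) > 2\sigma + 100\delta$ and then $y_1 \in \tau$ between $j_0$ and $y_0$ with $9\delta < d(y_0, y_1) < 10\delta$, and define
\[
V_x \;=\; \bigl\{\,v \in V(\Gamma) : v \text{ lies on an arc from $y_0$ to $y_1$ of length at most $22\delta$}\,\bigr\}.
\]
By fineness of $\Gamma$ and \cite[Proposition 2.1 (F2)]{BowditchRH}, $V_x$ is finite. For any $j \in J_0$ and any geodesic $\gamma_{x,j}$ from $x$ to $j$, I would form a geodesic triangle on $x, j, j_0$ whose $jj_0$-side lies in $\mathcal{N}_\sigma(J_0)$ by quasi-convexity. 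The $\delta$-thinness of this triangle, combined with the large distance of $y_0, y_1$ from $\mathcal{N}_\sigma(J_0)$, forces both $y_0$ and $y_1$ to lie within $\delta$ of $\gamma_{x,j}$; tracking nearby points along $\gamma_{x,j}$ and closing up as in Case 1 of Proposition~\ref{P: subbdd closed} produces the desired arc of length at most $22\delta$ intersecting $\gamma_{x,j}$ in a vertex.

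In the parabolic case $x \in \Pi_\Gamma$ I would set $y_0 = x$ and take $y_1 \in \tau$ with $\delta < d(y_0, y_1) < 2\delta$ (or $y_1 = j_0$ if no such vertex exists), then define $V_x$ as above with $22\delta$ replaced by $10\delta$; fineness again gives finiteness. For any $j \in J_0$ and geodesic $\gamma_{x,j}$, the triangle on $x, j, j_0$ together with $\delta$-thinness provides a vertex $z$ on $\gamma_{x,j}$ at distance at most $2\delta$ from $y_0 = x$, and $\delta$-closeness of points of $\tau$ to the $j_0z$-side of the triangle allows one to build an arc from $y_0$ to $y_1$ through $z$ of length at most $10\delta$, putting $z$ in $V_x \setminus \{x\}$.

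The main obstacle I anticipate is the preliminary step: turning the algebraic hypothesis $x \notin \Lambda\Stab_K(J)$ into the geometric statement that $\tau$ eventually escapes every neighborhood of $J_0$. The rest of the argument is essentially a transcription of the two-case thin-triangle analysis already carried out in Proposition~\ref{P: subbdd closed}, and should go through with only bookkeeping changes to the constants.
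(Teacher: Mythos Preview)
Your approach is correct and takes a genuinely different route from the paper. You transcribe the two--case analysis of Proposition~\ref{P: subbdd closed} essentially verbatim, which works here because $j$ is an honest vertex rather than a boundary point, so the triangles have at most one ideal vertex and the limiting argument from that proposition is unnecessary. The paper's proof proceeds differently. In the conical case, rather than fixing two nearby points $y_0,y_1$ on $\tau$ and collecting vertices on short arcs between them, the paper fixes a single vertex $x_0$ on $\tau$ far from $J_0$ and shows that any $\gamma_{x,j}$ contains a vertex lying on a $(1,4\delta)$--quasi-geodesic arc from $j_0$ to $x_0$; finiteness then comes from \cite[Lemma~8.2]{BowditchRH} rather than from \cite[Proposition~2.1(F2)]{BowditchRH}. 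In the parabolic case the paper splits on whether $d(x,J_0)$ exceeds $R+\delta+1$: the far sub-case reduces to the conical argument, while the near sub-case uses the connectedness and quasi-convexity of $J_0$ to route part of the bounding arc \emph{through} $J_0$, picking up a nearby $j_1\in J_0$ and a quasi-geodesic in $J_0$ from $j_1$ back to $j_0$. Your version avoids this split and never touches the internal structure of $J_0$, which is cleaner; the paper's version makes the role of $J_0$ more explicit. Regarding the preliminary step you flag, the paper handles it exactly as you do: it simply asserts that a point on $\tau$ far from $J_0$ exists because $x\notin\Lambda\Stab_K(J)$, leaving the passage from the algebraic to the geometric statement implicit.
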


\begin{proof}
Fix some $j_0\in J_0$, and fix $R$ so $J_0$ is $R$--quasi-convex.
Consider a (possibly ideal) geodesic triangle with vertices $x,j_0,j$ and sides $\gamma_{x,j},\gamma_{x,j_0},\gamma_{j,j_0}$. 

\textbf{Case 1: $x$ is a conical limit point. }

Let $x_0$ be a vertex on $\gamma_{x,j_0}$ so that $d(x_0,J_0)> R+\delta+1$. Such a vertex exists because $x\notin\Lambda \Stab_K(J)$.  
Since $\gamma_{j,j_0}\subseteq \mc{N}_R(J_0)$ by quasi-convexity, $d(x_0,\gamma_{j,j_0})>\delta$. 
Therefore, by hyperbolicity, there exists a vertex $y_0\in \gamma_{x,j}$ so that $d(x_0,y_0)<\delta$. 
Then there exists vertices $y_1\in \gamma_{x,j}$ and $x_1\in \gamma_{x,j_0}$ so that $y_1$ lies between $y_0$ and $j$ on $\gamma_{x,j}$, $x_1$ lies between $x_0$ and $j_0$ on $\gamma_{x,j_0}$, and $d(y_1,x_1)\le \delta$.
Hence there exists a $(1,4\delta)$--quasi-geodesic arc from $j_0$ to $x_0$ following:
\begin{itemize}
\item $j_0$ to $x_1$ via $\gamma_{x,j_0}$,
\item $x_1$ to $y_1$ via a geodesic of length at most $\delta$ 
\item $y_1$ to $y_0$ via $\gamma_{x,j}$
\item $y_0$ to $x$ via a geodesic of length at most $\delta$
\end{itemize}  
Then $y_1$ lies on a $(1,4\delta)$--quasi-geodesic arc between $x_0$ and $j_0$. 
Let $V_{x_0,j_0}$ be the sub-graph of $(1,4\delta)$--quasi-geodesic arcs between $x_0$ and $j_0$. By \cite[Lemma 8.2]{BowditchRH}, $V_{x_0,j_0}$ is locally finite. The length of any such arc is uniformly bounded above by $d(x_0,j_0)+4\delta$, so this sub-graph has finite diameter and is therefore finite. 
Hence $y_1$ is one of finitely many vertices in $\Gamma$. 

\textbf{Case 2: $x$ is a parabolic point. }

If $d(x,J_0)> R+\delta+1$, carry out the same proof as in the preceding case (the choice $x_0 = x$ suffices).

Hence assume $d(x,J_0)\le R +\delta+1$.   
The geodesic triangle with vertices $x,j,j_0$ is $\delta$--thin. Therefore, there exists a vertex $j_1\in J_0$ and a sub-path of $\gamma_{x,j}$ of length at most $R+\delta+2$ between $x$ and a vertex $y_1\in \gamma_{x,j}$ so that $y_1\ne x$ and $y_1,j_1$ have the following properties:
\begin{itemize}
\item either $y_1$ is the vertex on $\gamma_{x,j}$ in $J_0$ that is closest to $x$ in which case we set $j_1=y_1$, or
\item $d(y_1,j_1) < R + \delta+1$, a shortest path from $y_1$ to $j_1$ does not backtrack along $\gamma_{x,j}$ and there is an arc from $x$ to $j_1$ passing through $y_1$ that does not contain any vertices of $J_0$ other than $j_1$.
\end{itemize}  
In the first case, $y_1\ne x$ because $x\notin J_0$. 
In the second case, one might worry that eliminating backtracking could force us to choose $y_1=x$. 
If $d(x,j)\ge R+\delta+1$, this is not a problem. If $d(x,j)< R+\delta+1$, we can ensure we are in the first case by letting $y_1=j_1$ be the vertex on $\gamma_{x,j}$ in $J_0$ that is closest to $x$. 
In both cases, we obtain an arc $\sigma$ from $x$ to $j_1$ that contains $y_1$ and no vertices of $J_0$ other than $j_1$. 

In all of these above cases: 
\[d(j_1,j_0)\le d(j_0,x)+d(j_1,y_1)+d(y_1,x)\le 3R+3\delta+4.\]

Since $J_0$ is connected and quasi-convex, and $\Gamma$ is hyperbolic there exist $\lambda\ge 1$ and $\epsilon \ge 0$ so that some $(\lambda,\epsilon)$--quasi-geodesic arc $\rho$ in $J_0$ connects $j_0$ and $j_1$.
Thus $\rho$ has length at most $\lambda(3R+3\delta+4) +\epsilon$. 
Note that $\sigma$ cannot backtrack along $\rho$ at $j_1$ because every edge of $\rho$ has both endpoints in $J_0$ while $j_1$ is the only vertex on $\sigma$ that lies in $J_0$. 
Hence $y_1$ is on an arc from $x_0$ to $j_0$ of length at most 
\[\lambda(3R+3\delta+4) +\epsilon +d(j_1,y_1)+d(y_1,x)\le \lambda(3R+3\delta+4)+\epsilon +3R+3\delta+4\]
 and there are finitely many such arcs by \cite[Proposition 2.1 (F2)]{BowditchRH}, since $\Gamma$ is fine. 
Then there are finitely many possibilities for $y_1$. 
\end{proof}

	\begin{proposition}\label{separation divider}
	Suppose $L$ separates $x,y\in \partial_{\mc{D}}K\,\setminus\, \Lambda \Stab_K(L)$. 
	If $z\in\partial_{\mc{D}}K\,\setminus\, \Lambda \Stab_K(L)$ and $L$ does not separate $x,z$, then $L$ separates $y,z$. 
	\end{proposition}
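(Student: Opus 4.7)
The plan is to introduce a ``side'' function $s \co \partial_{\mc{D}}K \setminus \Lambda\Stab_K(L) \to \{+,-\}$, where $s(w)$ records which of the two components of $\Gamma \setminus L$ the point $w$ is associated to, and then to prove that $L$ separates two points $w,w'$ if and only if $s(w) \neq s(w')$. The proposition then follows immediately: $L$ separating $x,y$ forces $s(x) \neq s(y)$, $L$ not separating $x,z$ forces $s(x) = s(z)$, and hence $s(y) \neq s(z)$, yielding that $L$ separates $y,z$.

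First I would define $s(w)$. For a parabolic point $w \notin \Lambda\Stab_K(L)$, the two-sided carrier property (any vertex of $J = J^+ \cap J^-$ with infinite stabilizer lies in $\Lambda\Stab_K(L)$) combined with a similar check on $L$ itself places $w$ in a unique component of $\Gamma \setminus L$, which I declare to be $s(w)$. For a conical $w$, I would fix a base vertex $p$ and a geodesic ray $\gamma \co [0,\infty) \to \Gamma$ from $p$ to $w$. The assumption $w \notin \Lambda\Stab_K(L)$ forces $\gamma$ to eventually leave any bounded neighborhood of $J$. Combining this with the standard hyperbolic fact that a geodesic enters any fixed neighborhood of a quasi-convex subgraph over a single connected interval of parameters (up to bounded additive error), $\gamma$ cannot re-enter $J$ after its final excursion. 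Since any path between the two components of $\Gamma \setminus L$ must cross $J$ by the two-sided carrier property, the tail of $\gamma$ stabilizes in one component, which I set as $s(w)$. Independence of the base point and of the geodesic representative comes from the Morse lemma applied to $J^{\pm}$ and the usual thin-triangle argument.

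Next I would verify that separation corresponds to distinct sides by examining each case of Definition~\ref{D: separation}. If $s(w) = s(w')$, any proposed witnessing geodesic fellow-travels the rays (or vertices) representing $w$ and $w'$ at its two ends via thin triangles; using the quasi-convexity of $J^+$ and $J^-$ to ensure that tracking outside a neighborhood of $J$ preserves the component, both tails of the candidate geodesic must eventually lie in the common side, so no parameter $T$ satisfies the separation condition. Conversely, if $s(w) \neq s(w')$, the geodesic obtained by concatenating the relevant tails of the rays (or joining the vertices, in the parabolic-parabolic case) directly witnesses separation, since its two ends lie in distinct components of $\Gamma \setminus L$.

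The main technical obstacle will be establishing that the side of a conical limit point is well-defined: concretely, showing that a geodesic ray to $w \notin \Lambda\Stab_K(L)$ eventually leaves $J$ for good rather than merely intermittently. This requires identifying the limit set of the quasi-convex subgraph $J$ with $\Lambda\Stab_K(L)$---plausible from the cocompactness of $\Stab_K(L)$ on $J$ together with the two-sided carrier property---and uses the quasi-convexity of $J^+$ and $J^-$ to control the excursions of geodesic rays near $J$. Once the side function is in hand, the remainder of the argument is a routine thin-triangle computation in the $\delta$-hyperbolic graph $\Gamma$, using the fineness of $\Gamma$ in the spirit of Lemma~\ref{L: coarse projection rel Bowditch} only if needed to handle finitely many candidate geodesics near the peripheral vertices.
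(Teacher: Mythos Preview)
Your approach is plausible but takes a substantially more elaborate route than the paper, and in doing so you import hypotheses and technical obstacles that the paper's argument entirely avoids.

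The paper's proof is a direct thin-triangle argument on the single triple $x,y,z$. One fixes a geodesic triangle with sides $\gamma_{xy},\gamma_{yz},\gamma_{xz}$, where $\gamma_{xy}$ witnesses that $L$ separates $x,y$. Since $L$ does not separate $x,z$, all but a bounded portion of $\gamma_{xz}$ lies in a single component $C$ of $\Gamma\setminus L$; since $z\notin\Lambda\Stab_K(L)$ and $L$ is quasi-convex (from the quasi-convexity of $J$ in Hypotheses~\ref{separation setup}), $d(\gamma_{xz}(t),L)\to\infty$. Hyperbolicity then forces the $z$-end of $\gamma_{yz}$ into $C$ as well, while the $y$-end of $\gamma_{yz}$ tracks $\gamma_{xy}$ into the other component $C'$. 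A short case split on whether $y$ and $z$ are conical or parabolic finishes the argument. No global side function, no two-sided carrier property, and no identification of $\Lambda\Stab_K(L)$ with the limit set of $J$ is needed.

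By contrast, your route builds the global side map $s$ first and then deduces the proposition as a formal consequence. This reverses the logical flow of the paper: the equivalence relation with two classes is stated \emph{after} Proposition~\ref{separation divider} precisely because it follows from it. More importantly, your construction of $s$ invokes the two-sided carrier property, which is not among the standing hypotheses for this proposition (it is only added for Theorem~\ref{T: separation criterion}), and it requires the identification of the limit set of $J$ with $\Lambda\Stab_K(L)$, which you correctly flag as the main technical obstacle. That obstacle is real for your approach and is exactly what the paper's local thin-triangle argument sidesteps. Your strategy would likely succeed under the stronger hypotheses, but it proves a harder auxiliary statement than the proposition demands.
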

	
	\begin{proof}
		Consider a geodesic triangle with vertices $x,y,z$ and sides $\gamma_{xy},\gamma_{yz},\gamma_{xz}$ where the ordered subscripts indicate the endpoints and orientation. Further, assume $\gamma_{xy}$ witnesses that $L$ separates $x,y$. 
		
		If $z$ is a conical limit point then all but a finite length of $\gamma_{xz}$ lies in a single component $C$ of $\Gamma\,\setminus\, L$ because $L$ does not separate $x$ from $z$.  By hypothesis $z \notin \Lambda \Stab_K(L)$, so, as $t \to \infty$, the quasi-convexity of $L$ ensures that $d(\gamma_{xz}(t),L)\to \infty$.  Moreover, for all $t > 0$ large enough $\gamma_{yz}(t)$ lies in $C$ by hyperbolicity.  
		If $y$ is a parabolic point then $y$ lies in the other component $C'\ne C$ of $\Gamma\,\setminus\, L$, otherwise $y$ is a conical limit point and, for all $t > 0$ sufficiently large, $\gamma_{zy}(t) = \gamma_{yz}(-t)$ lies in $C'$.  In either case, $L$ separates $y$ from $z$.  
		
		If $z$ is a parabolic vertex in $\gamma$, then $z \in C$ by hypothesis.  As above, whether $y$ is a parabolic or conical limit point, $L$ separates $y$ from $z$.
	\end{proof}
	
	By Proposition~\ref{separation divider}, there is an equivalence relation $\sim$ on $\partial_\mc{D}K \,\setminus\, \Lambda \Stab_K(L)$ defined by $x\sim y$ if and only if $x=y$ or $L$ does not separate $x,y$. There are two equivalence classes. 
	
	\begin{proposition}\label{P: separation criterion}
	If $x,y\in\partial_{\mc{D}}(K)\,\setminus\, \Lambda\Stab_K(L)$ and $x\not\sim y$, then $x,y$ lie in distinct components of $\partial_{\mc{D}}K\,\setminus\, \Lambda\Stab_K(L)$. By passing to an index at most $2$ subgroup ${K_L}$ of $\Stab_K(L)$, these components are ${K_L}$--distinct. 
	\end{proposition}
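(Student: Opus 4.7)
My plan is to show that each of the two $\sim$-equivalence classes in $\partial_{\mc{D}}K\,\setminus\, \Lambda\Stab_K(L)$ is open in the subspace topology induced from $\partial_{\mc{D}}K$. Since they partition the space, openness of both makes each clopen, so the connected components respect $\sim$; in particular $x$ and $y$, being in different classes, lie in distinct components of $\partial_{\mc{D}}K\,\setminus\, \Lambda\Stab_K(L)$.

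To produce an open neighborhood of a point $p\in\partial_{\mc{D}}K\,\setminus\, \Lambda\Stab_K(L)$ contained in its $\sim$-class, I would apply Lemma~\ref{L: coarse projection rel Bowditch} with $J_0 = J$. The hypotheses hold: since $J$ is the hypercarrier of $L$ we have $\Stab_K(L) = \Stab_K(J)$, so $p\notin \Lambda\Stab_K(J)$; and $p\notin J$, automatically when $p$ is a conical limit point, and by the contrapositive of the two-sided carrier property when $p$ is parabolic (a parabolic vertex of $J^+\cap J^- = J$ must lie in $\Lambda\Stab_K(L)$). The lemma produces a finite vertex set $V_p$ met by every geodesic from $p$ to every vertex of $J$.

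The heart of the argument is to show $N(p,V_p)\cap (\partial_{\mc{D}}K\,\setminus\,\Lambda\Stab_K(L))\subseteq [p]_\sim$. Suppose $q$ lies in this neighborhood but $L$ separates $p$ from $q$, with witness geodesic $\gamma_0$ whose eventual ends (or, for parabolic endpoints, the vertices themselves) lie in distinct components of $\Gamma\,\setminus\, L$. Let $\gamma$ be a geodesic from $p$ to $q$ avoiding $V_p\,\setminus\,\{p\}$. Since $p,q\notin \Lambda\Stab_K(J)$ and $J$ is quasi-convex, the ends of $\gamma_0$ leave every bounded neighborhood of $J$; stability of geodesics with common (possibly ideal) endpoints in a $\delta$--hyperbolic graph then forces $\gamma$ to sit uniformly close to $\gamma_0$ at both ends, so the ends of $\gamma$ lie in the same two components of $\Gamma\,\setminus\, L$ as those of $\gamma_0$. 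Hence $\gamma$ crosses between these components, and by the two-sided carrier property the crossing occurs through a vertex of $J^+\cap J^- = J$. But then an initial sub-segment of $\gamma$ is itself a geodesic from $p$ to a vertex of $J$, so by Lemma~\ref{L: coarse projection rel Bowditch} it meets $V_p\,\setminus\,\{p\}$, contradicting the choice of $\gamma$.

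For the group statement, take $K_L\le \Stab_K(L)$ to be the subgroup that setwise preserves each component of $\Gamma\,\setminus\, L$. It has index at most $2$ in $\Stab_K(L)$, and being finite index gives $\Lambda K_L = \Lambda\Stab_K(L)$. Since each $k\in K_L$ carries a witness configuration for $a\not\sim b$ to one of the same type for $ka\not\sim kb$, the $K_L$-action preserves each $\sim$-class, so the components containing $x$ and $y$ are $K_L$-distinct. I expect the main obstacle to be the stability-of-geodesics step: $\Gamma$ need not be proper, so the case analysis on whether $p$ and $q$ are conical or parabolic, and the precise control of the ends of $\gamma$ near the boundary, will require careful bookkeeping in the spirit of Proposition~\ref{P: subbdd closed}.
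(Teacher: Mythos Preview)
Your overall architecture matches the paper's: show each $\sim$--class is open by producing, for each $p\in\partial_{\mc D}K\setminus\Lambda\Stab_K(L)$, a finite vertex set $V_p$ such that any geodesic from $p$ crossing $L$ must meet $V_p$, and then take $U=N(p,V_p)$. The final step (passing to an index--$2$ subgroup preserving the two sides of $\Gamma\setminus L$) is also the same.

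There is, however, a genuine gap in how you invoke Lemma~\ref{L: coarse projection rel Bowditch}. You assert that for a parabolic point $p\notin\Lambda\Stab_K(L)$ one has $p\notin J$, on the grounds that ``a parabolic vertex of $J^+\cap J^- = J$ must lie in $\Lambda\Stab_K(L)$.'' This conflates $J^+\cap J^-$ with $J$. The two--sided carrier property gives $J^+\cap J^-\subseteq L\subseteq J^+\cup J^-=J$ (the equality $J^+\cap J^-=:J$ in the displayed definition is a typo, as the paper's own proof makes explicit), and only guarantees that infinite--stabilizer vertices of $J^+\cap J^-$ lie in $\Lambda\Stab_K(L)$. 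A parabolic vertex can perfectly well sit in $J^+\setminus J^-\subseteq J$ without belonging to $\Lambda\Stab_K(L)$; for such $p$ you have $p\in J=J_0$, so the hypothesis $x\notin J_0$ of Lemma~\ref{L: coarse projection rel Bowditch} fails and your application of the lemma breaks down.

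The paper's fix is exactly to exploit the two sides: if $p$ is a parabolic vertex in $J$, then $p\notin L$ forces $p\in J^+\setminus J^-$ (say), and one applies Lemma~\ref{L: coarse projection rel Bowditch} with $J_0=J^-$ rather than $J_0=J$. This is legitimate because any geodesic from $p$ that crosses $L$ must, by the two--sided carrier property, meet $J^-$, and $p\notin J^-$. Your third paragraph's detour through a comparison geodesic $\gamma_0$ and a stability argument is then unnecessary: once $V_p$ controls geodesics from $p$ into $J^-$ (or into $J$, in the case $p\notin J$), every geodesic from $p$ to a point it is $\sim$--inequivalent to already meets $V_p$.
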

	
	\begin{proof}
	We claim that if $z\in \partial_{\mc{D}}K \,\setminus\, \Lambda \Stab_K (L)$, then there exists an open neighborhood $U$ of $z\in \partial_{\mc{D}}K \,\setminus\, \Lambda \Stab_K (L)$ so that $u\sim z$ for all $u\in U$. 
	
To this end, we claim that there is a finite set of vertices $V_z$ so that any geodesic from $z$ that crosses $L$ must pass through $V_z$. 

If $z\notin J$, then Lemma~\ref{L: coarse projection rel Bowditch} with $J=J_0$ immediately provides $V_z$. 
Hence, we may assume $z$ is a parabolic vertex in $J$. 

The two-sided carrier property ensures that $J=J^+\cup J^-$ and $J^+\cap J^-\subseteq L$. Then $z\notin L$, so $z\in J^+\,\setminus\, J^-$ or $z\in J^-\,\setminus\, J^+$. Up to relabeling, we may assume $z\in J^+\,\setminus\, J^-$.  
 Since every path from $z$ to a vertex of the other component of $\Gamma\,\setminus\, L$ must pass through $J^-$, we can apply Lemma~\ref{L: coarse projection rel Bowditch} to show that there exists a finite set of vertices $V_z$ so that any geodesic $\gamma$ from $z$ to $J^-$ has $\gamma\cap V_z\ne \emptyset$.
 
Hence in all cases, if $L$ separates $z,w\in \partial_{\mc{D}}(K)\,\setminus\, \Lambda\Stab_K(L)$, any geodesic between $z$ and $w$ passes through $V_z$. 

Recall the set $N(z,V_z)$ from Definition~\ref{D: fine boundary} is an open neighborhood of $z$ in $\partial_{\mc{D}}(K)$. 
Therefore, if $U= N(z,V_z)$, then $z\sim u$ for all $u\in U$.

Thus we conclude that $[x]$ and $[y]$ are unions of components of $\partial_{\mc{D}} K\,\setminus\, \Stab_K(L)$ because they are open and partition $\partial_{\mc{D}} K\,\setminus\, \Stab_K(L)$.

Each $k\in \Stab_K(L)$ permutes the two components of $\Gamma\,\setminus\, L$, so $\Stab_K(L)$ acts on the equivalence classes of $\sim$. By passing to an index $2$ subgroup $K_L$ of $\Stab_K(L)$ if necessary, we can ensure that for any $k_L\in K_L$, $k_L\cdot[x]\ne [y]$, so $x$ and $y$ are in $K_L$--distinct components of $\partial_{\mc{D}} K\,\setminus\, \Stab_K(L)$. 
	\end{proof}

	Proposition~\ref{P: separation criterion} completes the proof of Theorem~\ref{T: separation criterion}.

\section{Separating points in the Bowditch boundary of a group acting relatively geometrically using hyperplane stabilizers}
\label{S: relgeom separation}
	
	For this section, let $(K,\mc{D})$ act relatively geometrically on a CAT(0) cube complex $\tilde{X}$. For $D\in\mc{D}$ and $k\in K$, let $\Sigma_{D^k}$ denote the complete sub-graph containing the vertices whose stabilizers are commensurable to $D^k$. Let $\Gamma$ be the complete electrification of $\tilde{X}$ with respect to 
	\[\{\Sigma_{D^k}:\,D\in\mc{D},\,k\in K\}.\] 
	Recall the electrification map $\beta: \tilde{X}^{(1)} \to \Gamma$ that collapses the $\Sigma_{D^k}$ to a single vertex is a continuous coarse inverse of the map in Proposition~\ref{P: electric qi}, and is $K$--equivariant. 
	
	Let $L$ be the hyperset associated to a hyperplane $W$ of $\tilde{X}$ as in Example~\ref{hypersets in CCC}. Let $J$ be the associated hyperset carrier. 
	Then $\beta(J)$ is a quasi-convex subset of $\Gamma$. Let $K_W$ be the stabilizer of $W$. 
	
	Since $J$ is associated to a hyperplane, $J\,\setminus\, L$ has two distinct components $J^+$ and $J^-$ that are connected and convex. Therefore $\beta(J^+)$ and $\beta(J^-)$ are both quasi-convex. 
	
	\begin{proposition}\label{P: hyperplane parabolic iff subgraph intersects}
	Let $W$ be a hyperplane of $\tilde{X}$, and let $D^k$ be a maximal peripheral subgroup. $W$ is dual to an edge of $\Sigma_{D^k}$ if and only if $D^k$ is commensurable to a subgroup of $\Stab_K (W)$. 
	\end{proposition}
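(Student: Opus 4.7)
My plan is to handle the two directions separately and exploit the CAT(0) projection onto the carrier of $W$ for the backward direction. The forward direction is a direct unpacking of definitions: if $e\in \Sigma_{D^k}$ is an edge dual to $W$, then $\Stab_K(e)$ is commensurable to $D^k$ by definition of $\Sigma_{D^k}$, and any cubical isometry fixing $e$ setwise preserves the equivalence class of edges parallel to $e$, hence stabilizes the dual hyperplane $W$. Thus $\Stab_K(e)\le \Stab_K(W)$ is the desired subgroup of $\Stab_K(W)$ commensurable to $D^k$.

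For the reverse direction, suppose $D^k$ is commensurable to a subgroup $H' \le \Stab_K(W)$. I would set $D_0 := D^k \cap H'$, a finite index subgroup of $D^k$ contained in $\Stab_K(W)$, and pass to the (at most index $2$) subgroup $D_0' \le D_0$ acting trivially on the set of two sides of $\tilde{X}\setminus W$. Since peripheral subgroups act elliptically in a relatively geometric action, $D^k$ (and hence $D_0'$) has a nonempty fixed set in $\tilde{X}$. Because $D_0'$ stabilizes the convex carrier $N(W)$, the CAT(0) closest-point projection $\pi\co \tilde{X}\to N(W)$ is $D_0'$--equivariant, and therefore applying $\pi$ to any $D_0'$--fixed point of $\tilde{X}$ produces a $D_0'$--fixed point $p\in N(W)$.

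The next step is to promote this CAT(0) fixed point to an edge of $\tilde{X}$ dual to $W$ whose stabilizer is commensurable to $D^k$. Using the identification $N(W)\cong W_0\times [0,1]$, with $W_0$ a combinatorial hyperplane, the side-preserving property forces the action of $D_0'$ to respect the product decomposition and act trivially on $[0,1]$. Writing $p=(p_W,t)$, the point $p_W\in W_0$ is then $D_0'$--fixed and lies in the interior of a unique cube $\sigma$ of $W_0$, so $D_0'$ stabilizes $\sigma$. Passing to the kernel $D_0''$ of the induced map $D_0'\to\operatorname{Aut}(\sigma)$ yields a finite index subgroup of $D^k$ fixing $\sigma$ pointwise. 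Then for any vertex $v$ of $\sigma$, the edge $\{v\}\times [0,1]$ is an edge of $\tilde{X}$ dual to $W$ and fixed by $D_0''$; since its stabilizer is infinite and $D^k$ is the unique maximal peripheral containing $D_0''$, that stabilizer is commensurable to $D^k$ (using the relatively geometric hypothesis that infinite cell stabilizers are finite index in some $D^g$). Hence the edge lies in $\Sigma_{D^k}$, completing the reverse direction.

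The main technical care is the two-step descent to a finite index subgroup with a vertex fixed point. First, I must rule out elements of $D_0$ that swap the two sides of $W$, which is what $D_0'$ does. Second, because the resulting CAT(0) fixed point $p_W$ on $W_0$ might only lie in the interior of some higher-dimensional cube, I must further descend to $D_0''$ to obtain a subgroup of $D^k$ fixing a genuine vertex of $W_0$ and therefore fixing an actual edge of $\tilde{X}$ dual to $W$.
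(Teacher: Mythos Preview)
Your proof is correct, and the forward direction matches the paper exactly. For the converse, however, you take a genuinely different route. The paper argues combinatorially in $\tilde{X}^{(1)}$: starting from a vertex $x\in\Sigma_{D^k}$ fixed by (a finite index subgroup of) $D^k$, it considers the set $M$ of edge midpoints of $W$ at minimal combinatorial distance from $x$, proves $M$ is finite by an induction on that distance using the generalized fineness of $\tilde{X}^{(1)}$ (circuits through an edge with finite stabilizer), and then deduces via orbit--stabilizer that the nearest dual edge has stabilizer commensurable to $D^k$. Your argument instead uses the CAT(0) geometry directly: you project a $D_0'$--fixed point to the convex carrier $N(W)\cong W_0\times[0,1]$, exploit that side-preserving elements act trivially on the interval factor, and descend through the finite automorphism group of a cube to produce a $D_0''$--fixed dual edge.

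Your approach is cleaner and avoids the inductive fineness argument entirely; it relies only on standard CAT(0) cube complex facts (convexity of carriers, equivariance of nearest-point projection, the product structure $N(W)\cong W_0\times[0,1]$) together with almost malnormality. The paper's approach has the virtue of staying within the $1$--skeleton and the generalized-fine framework that the surrounding sections develop, and the auxiliary statement that only finitely many midpoints of $W$ realize the minimal distance to a given vertex may be of independent use; but for this proposition alone your CAT(0) argument is more economical.
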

	
	\begin{proof}
	If $W$ is dual to an edge $e$ of $\Sigma_{D^k}$, then $\Stab_K(e)$ is commensurable to $D^k$. Since $W$ is dual to $e$, $\Stab_K(e)\le \Stab_K(W)$. 
	
	Conversely suppose $D^k$ is commensurable to a subgroup $D_0\le \Stab_K(W)$ that stabilizes some vertex $x\in \Sigma_{D^k}$. 
	Let $\bar{W}$ be the carrier of $W$.

	Let $v$ be any vertex of $\tilde{X}\oskel$, let $s =\min\{d_{\tilde{X}\oskel}(z,v),\, z\in W\cap \tilde{X}\oskel\}$ and let
	\[M = \{z\in W\cap \tilde{X}\oskel:\, d_{\tilde{X}\oskel}(z,v) = s\}.\]  
	
	Assume for now that $M$ is finite.
	Let $w\in W\cap \tilde{X}\oskel$ be an edge midpoint that minimizes the combinatorial distance between $W\cap\tilde{X}\oskel$ and $x$, and let $e_W$ be the edge containing $w$.
Then $\Stab_K(x)\cdot w$ is finite because $M$ is finite. 
Hence $\Stab_K(w)$ and $\Stab_K(e_W)$ are commensurable to $\Stab_K(x)$. 
Then, $\Sigma_{\Stab_K(x)}$ intersects $W$. Therefore, it suffices to show that $M$ is finite. 
	
We prove that $M$ is finite by induction on $s-\frac12.$ Suppose toward a contradiction that $M$ is infinite with elements $v_0,v_1,v_2,v_3\ldots v_m,\ldots$. 
	If $s=\frac12$, then $W$ self-intersects or self-osculates, which is impossible in a CAT(0) cube complex. 
	Now assume $s>\frac12$.
	By passing to an infinite subset, we may assume no two $v_i$ lie in the same $\Sigma_{D^k}$ because each $\Sigma_{D^k}$ is finite. 
	Therefore, up to reindexing, we may assume $v_0$ and $v$ are not in the same $\Sigma_{D^k}$. Fix $\sigma_0$, a combinatorial geodesic from $v$ to $v_0$. 
	Let $\sigma_i$ be  combinatorial geodesics from $v_i$ to $v$. 
	Since the $\sigma_i$ are geodesic, we may choose $\sigma_i$ so that $\sigma_i\cap \sigma_0$ is connected. Note $|\sigma_i|=s$ for all $i$.
	It also follows that $\sigma_0$ contains an edge with finite stabilizer since $v_0,v$ are not in the same $\Sigma_{D^k}$. 
	
	If infinitely many $\sigma_i\cap\sigma_0$ contain an edge $f_0$ adjacent to $v$, set $v'\ne v$ to be the vertex of $f_0$. 
	Then there are infinitely many points in $W$ that realize the minimal distance, $s-1$, between $v'$ and $W$, so the inductive hypothesis gives a contradiction.
	
		Thus we may assume that for $i\ge 1$, $\sigma_i\cap \sigma_0 = \{v\}$ because $\sigma_i\cap \sigma_0$ is connected. 		
		The minimality of $s$ ensures that $\sigma_i$ does not intersect $W$ except at $v_i$. 
		Since $W$ is a hyperplane, there exists a combinatorially geodesic path $\mu_i$ from $v_i$ to $v_0$ in $\bar{W}$ that does not intersect $\sigma_i$ or $\sigma_0$ except at $v_i,v_0$ and this path has length at most $2s$ because $|\sigma_i| = s$. 
	Hence for each $v_i$, there exists a loop $\rho_i$ based at $v$ that follows $\sigma_i,\,\mu_i,$ and $\sigma_0$ of length at most $4s$. Since $\sigma_i$ and $\sigma_0$ are disjoint for $i\ge 1$, $\rho_i$ is a circuit.
Recall that $\sigma_0$ must contain an edge that has finite stabilizer, so the existence of infinitely many $\rho_i$ contradicts the generalized fineness of $\tilde{X}^{(1)}$. 
It follows that $M$ is finite. 
	\end{proof}
	
	\Cref{P: hyperplane parabolic iff subgraph intersects} is particularly relevant for studying hypersets in $\Gamma$ the electrified fine hyperbolic graph.  In particular, peripheral points that lie in hypersets coming from images of hyperplanes in $\tilde{X}$ are visible in the subgroup structure of the hyperplane stabilizer.
	
	\begin{proposition}\label{P: peripheral crossover}
 If $x\in \beta(J^+)\cap \beta(J^-)$ is a vertex, then $x$ is a peripheral point whose stabilizer is commensurable to a subgroup of $\Stab_K(W)$. 
\end{proposition}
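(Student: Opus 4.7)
The plan is to first pin down the possible forms of a vertex $x \in \Gamma$ lying in $\beta(J^+) \cap \beta(J^-)$, and then exploit the connectedness of the peripheral subgraphs $\Sigma_{D^k}$ together with Proposition~\ref{P: hyperplane parabolic iff subgraph intersects}.

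First I would observe that the vertices of $\Gamma$ fall into two disjoint classes: stable vertices (i.e.\ vertices of $\tilde{X}^{(1)}$ with finite stabilizer that lie outside every $\Sigma_{D^k}$), and the collapsed vertices $v_{D^k}$. Since $L = W \cap \tilde{X}^{(1)}$ consists of edge midpoints of $\tilde{X}^{(1)}$, and the two-sided carrier property gives $J^+ \cap J^- \subseteq L$, the vertex sets of $J^+$ and $J^-$ (viewed inside $\tilde{X}^{(1)}$) are disjoint; concretely the vertex set of $J^\pm$ consists of the vertices of the combinatorial hyperplane on the corresponding side of $W$. By Proposition~\ref{P: electric qi generalized}, the electrification map $\beta$ restricts to the identity on the stable part, so if $x$ were a stable vertex, then $\beta^{-1}(x) = \{x\}$ would have to lie in both $J^+$ and $J^-$, contradicting disjointness. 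Hence $x = v_{D^k}$ for some $D \in \mathcal{D}$ and $k \in K$; in particular, $x$ is a peripheral point.

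Second, the fact that $v_{D^k} \in \beta(J^+) \cap \beta(J^-)$ means that the subgraph $\Sigma_{D^k} \subseteq \tilde{X}^{(1)}$ contains a vertex $v^+$ on the $+$ side of $W$ and a vertex $v^-$ on the $-$ side. By Proposition~\ref{stab subgraph nice}, $\Sigma_{D^k}$ is connected, so there is an edge path in $\Sigma_{D^k}$ from $v^+$ to $v^-$. Because a hyperplane in a CAT(0) cube complex separates $\tilde{X}$ into two halfspaces, any such path must contain at least one edge dual to $W$, and that edge lies in $\Sigma_{D^k}$. Proposition~\ref{P: hyperplane parabolic iff subgraph intersects} then immediately yields that $D^k$ is commensurable to a subgroup of $\Stab_K(W)$, which completes the proof since $\Stab_K(v_{D^k})$ is commensurable with $D^k$.

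The main subtlety I expect lies in the first step: one must carefully distinguish between $J^\pm$ themselves (which, as noted in the example preceding the proposition, are slightly larger than the combinatorial hyperplanes, so that $L \subseteq J^+ \cup J^-$) and the vertex sets of $J^\pm$, which remain precisely the two disjoint combinatorial hyperplane vertex sets on either side of $W$. Once one keeps track of the fact that only edge midpoints are shared between $J^+$ and $J^-$, the rest of the argument is just the hyperplane separation property in CAT(0) cube complexes combined with Proposition~\ref{P: hyperplane parabolic iff subgraph intersects}.
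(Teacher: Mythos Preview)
Your argument is correct and follows essentially the same route as the paper: first rule out stable vertices using disjointness of the vertex sets of $J^+$ and $J^-$ in $\tilde{X}^{(1)}$, then use connectedness of $\Sigma_{D^k}$ to produce an edge dual to $W$ and invoke Proposition~\ref{P: hyperplane parabolic iff subgraph intersects}. One small attribution issue: you justify $J^+ \cap J^- \subseteq L$ by appealing to the two-sided carrier property, but that property is formulated for $\beta(J)$ in $\Gamma$ and is in fact established in Proposition~\ref{P: hyperplane two-sided carrier} \emph{using} the present proposition; the disjointness of the vertex sets of $J^+$ and $J^-$ that you actually need comes directly from their definition in $\tilde{X}^{(1)}$ as the two sides of the hyperplane carrier (equivalently, the components of $J\setminus L$), exactly as the paper uses when it writes $J^+\cap J^-\cap\tilde{X}^{(0)}=\emptyset$.
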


\begin{proof}
Since $J^+\cap J^-\cap\tilde{X}^{(0)} = \emptyset$, there exist distinct vertices $y_+\in J^+\,\setminus\, L$ and $y_- \in  J^-\,\setminus\, L$ so that $\beta(y_+)=x = \beta(y_-)$. Since $\beta$ is the map that collapses the $\Sigma_{D^k}$, $y_+$ and $y_-$ both lie in some $\Sigma_{D_y^{k_y}}$ where $D_y\in\mc{D}$ and $k_y\in K$. Any combinatorial path between different sides of $W$ contains an edge dual $W$, so the connected sub-graph $\Sigma_{D_y^{k_y}}$ must contain an edge $e$ dual to $W$. 
The stabilizer of the edge $e$ stabilizes $W$ and must be commensurable to $D_{y}^{k_y}$ by \Cref{P: hyperplane parabolic iff subgraph intersects}. 
\end{proof}
	
	\begin{proposition}\label{P: hyperplane two-sided carrier}
	The hyperset carrier $\beta(J)$ has the two-sided carrier property.
	\end{proposition}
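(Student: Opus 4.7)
The plan is to take the two sides to be the images $\beta(J^+)$ and $\beta(J^-)$, where $J^+$ and $J^-$ are the two natural sides of the hyperplane carrier in $\tilde{X}^{(1)}$: each is the combinatorial hyperplane on one side of $W$, enlarged slightly to include the closed half-edges of edges dual to $W$ up to their midpoints (so that $J^+\cap J^-=L$ and $J^+\cup J^-=J$ in $\tilde{X}^{(1)}$). Both $\beta(J^\pm)$ are connected, being continuous images of connected subgraphs, and their quasi-convexity in $\Gamma$ was already observed at the beginning of Section~\ref{S: relgeom separation}.

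The inclusion $\beta(L)\subseteq \beta(J^+)\cup\beta(J^-)$ is immediate from $L\subseteq J^+\cup J^-$. For the reverse inclusion $\beta(J^+)\cap\beta(J^-)\subseteq\beta(L)$, I would split into two cases. Away from the collapsed subgraphs $\Sigma_{D^k}$ the map $\beta$ is injective on the stable part of $\tilde{X}^{(1)}$, so any non-vertex point in the intersection is forced to be the image of a point of $J^+\cap J^-=L$. For a vertex $v\in \beta(J^+)\cap \beta(J^-)$, Proposition~\ref{P: peripheral crossover} identifies $v=v_{D^k}$ as a peripheral vertex whose stabilizer is commensurable to a subgroup of $\Stab_K(W)$, and then Proposition~\ref{P: hyperplane parabolic iff subgraph intersects} guarantees that $W$ is dual to some edge of $\Sigma_{D^k}$; the midpoint of that edge lies in $L$ and is sent by $\beta$ to $v_{D^k}$, so $v\in \beta(L)$ as required.

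For the separation condition on paths, I would use Observation~\ref{P: collapsing hypersets}: the two components of $\Gamma\,\setminus\, \beta(L)$ are the $\beta$-images of the two components of $\tilde{X}^{(1)}\,\setminus\, (S\cup L)$, where $S$ is the union of those $\Sigma_{D^k}$ meeting $L$. Given a path $\gamma'$ in $\Gamma$ between vertices in distinct components, take a complete de-electrification to a path $\gamma$ in $\tilde{X}^{(1)}$ whose endpoints lie on opposite sides of the hyperplane $W$. Since $W$ is a hyperplane in a CAT(0) cube complex, $\gamma$ must traverse an edge dual to $W$; such an edge lies in both $J^+$ and $J^-$, so $\gamma'=\beta(\gamma)$ meets both $\beta(J^+)$ and $\beta(J^-)$. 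Finally, the parabolic condition is a direct consequence of the previous paragraph: if $v\in \beta(J^+)\cap \beta(J^-)$ has infinite stabilizer, then $v=v_{D^k}$ with $D^k$ commensurable to a subgroup of $\Stab_K(W)=\Stab_K(L)$. Consequently $D^k\cap \Stab_K(L)$ has finite index in $D^k$ and is therefore infinite, which places $v_{D^k}$ in $\Lambda \Stab_K(L)$.

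I expect the main obstacle to be the inclusion $\beta(J^+)\cap\beta(J^-)\subseteq \beta(L)$, because this is where one has to control the new identifications introduced by electrification. The naive inclusion $\beta(J^+\cap J^-)\subseteq\beta(L)$ is obvious, but $\beta(J^+)\cap\beta(J^-)$ can in principle be strictly larger, since distinct points of $J^+$ and $J^-$ may be collapsed to the same vertex of $\Gamma$. The earlier cube-complex propositions (Proposition~\ref{P: hyperplane parabolic iff subgraph intersects} and Proposition~\ref{P: peripheral crossover}) are exactly the tools needed to show that any such extra identification already lives in $\beta(L)$, so the proof reduces to assembling these facts.
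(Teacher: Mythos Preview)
Your proposal is correct and follows the paper's scaffolding: take $\beta(J^+)$ and $\beta(J^-)$ as the two sides and use Propositions~\ref{P: peripheral crossover} and~\ref{P: hyperplane parabolic iff subgraph intersects} to control their overlap. You are in fact more careful than the paper in isolating and verifying the inclusion $\beta(J^+)\cap\beta(J^-)\subseteq\beta(L)$; the paper's proof passes directly from $s\in\beta(J^+)\cap\beta(J^-)$ to the limit-set conclusion without separately recording that $s\in\beta(L)$.

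The one place your argument genuinely diverges is the path-separation clause. You de-electrify an arbitrary path in $\Gamma$ to a path in $\tilde{X}^{(1)}$ and invoke the hyperplane property of $W$ to find a dual edge. The paper instead argues entirely inside $\Gamma$: since $\beta(L)$ is a hyperset, any path $\rho$ between the two components already meets $\beta(L)$ at some point $\ell$, and $\ell$ is either the midpoint of an edge with endpoints in $\beta(J^+)$ and $\beta(J^-)$, or a collapsed peripheral vertex lying in both. Both routes are short and correct. Your de-electrification step needs the small observation that the path can be taken without peripheral backtracking (for instance by first passing to an embedded subpath), since Definition~\ref{D: path electrification} only supplies a de-electrification under that hypothesis; the paper's direct argument sidesteps this technicality.
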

	
	\begin{proof}
	We immediately see that $\beta(J) = \beta(J^+)\cup \beta( J^-)$. 
	If $s\in \beta(J^+)\cap \beta(J^-)$, then $s$ has stabilizer commensurable to a subgroup of $K_W$, so $\beta(\Sigma_D^k)$ is a vertex in $\Lambda \Stab_K(\beta(L))$ the limit set of the hyperset stabilizer. 
	Let $\rho$ be a path between components of $\Gamma\,\setminus\,\beta(L)$. Let $\ell\in \beta(L)\cap \rho$. Either $\ell$ is the midpoint of an edge whose endpoints are in $\beta(J^+)$ and $\beta(J^-)$ or $\ell$ is a vertex formed by collapsing an edge dual to a hyperplane.  In both cases $\ell\in \beta(J^+)\cap \beta(J^-)$. 
	Hence every path between vertices of $\Gamma\,\setminus\,\beta(L)$ intersects both $\beta(J^+)$ and $\beta(J^-)$. 
	\end{proof}
	
	


	
We are now ready to prove that any two points $x,y\in\partial_{\mc{D}}K$ can be separated by a hyperset associated to a hyperplane

As we have seen, relatively geometric actions on CAT$(0)$ cube complexes let us take advantage of both the cubical geometry of $\tilde{X}$ as well as the fine graph structure of $\Gamma$.  The following \Cref{D: upstairs separation} lets us study separating hypersets using the separating properties of hyperplanes.  Note that it need not be the case that every hyperset in $\Gamma$ is the image of a hyperplane in $\tilde{X}$.

\begin{definition}\label{D: upstairs separation}
	Let $W$ be a hyperplane in $\tilde{X}$ with associated hyperset $L$ as in Example~\ref{hypersets in CCC}.  We say that $W$ separates $x,y\in \partial_{\mc{D}}K\,\setminus\, \Lambda \Stab_K(W)$ if $\beta(L)$ separates $x,y$ in the sense of \Cref{D: separation}. 
\end{definition}
	
We are now ready to prove that any two points $x,y\in\partial_{\mc{D}}K$ can be separated by a hyperset (associated to a hyperplane).
Our strategy is to show that if $x,y\in \partial_{\mc{D}}K$, then some hyperplane $W$ separates $x,y$ in the sense of Definition~\ref{D: upstairs separation}. 
Then we apply Theorem~\ref{T: separation criterion} to show that $x,y$ lie in distinct complementary components of $\Lambda\Stab_K (W)$.  
	
	\begin{lemma}\label{L: stabilizer classification}
	Let $x$ be a parabolic point in $\partial_{\mc{D}}K$ with stabilizer $K_x$ and let $\gamma$ be a combinatorial geodesic with end--vertices $v,w$ in $\tilde{X}$ so that $v\in \Sigma_{K_x}$, and $\gamma$ is of minimal length among all combinatorial geodesics between $w$ and $\Sigma_{K_x}$. Then every hyperplane dual to an edge of $\gamma$ does not intersect $\Sigma_{K_x}$. 
	\end{lemma}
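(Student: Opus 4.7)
The plan is to translate the statement into the hyperplane combinatorics of the CAT(0) cube complex $\tilde{X}$ and invoke the standard theory of combinatorial projection onto a convex subcomplex. The essential structural input is Proposition~\ref{stab subgraph nice}, which guarantees that $\Sigma_{K_x}$ is the $1$--skeleton of a compact, convex subcomplex $C$ of $\tilde{X}$.

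I would then use two standard facts about CAT(0) cube complexes: (i) a combinatorial geodesic between two vertices crosses each hyperplane at most once and has length equal to the number of hyperplanes separating its endpoints; (ii) for a convex subcomplex $C$ and a vertex $w$, there is a unique \emph{gate} $\pi_C(w)\in C$ realizing $d(w,C)$, and the hyperplanes separating $w$ from $\pi_C(w)$ are precisely those with $C$ entirely contained in the half-space opposite $w$; in particular, none of these hyperplanes crosses $C$.

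Applying (i) and (ii), the minimality assumption on $\gamma$ forces $|\gamma|=d(w,\Sigma_{K_x})$, so $v$ must coincide with the gate $\pi_C(w)$. The hyperplanes dual to edges of $\gamma$ are then exactly the hyperplanes separating $w$ from $\Sigma_{K_x}$, and by (ii) none of these intersects $\Sigma_{K_x}$, completing the proof.

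As a more self-contained alternative, one could argue by contradiction: if a hyperplane $W$ dual to an edge of $\gamma$ also crossed $\Sigma_{K_x}$, then since $\gamma$ crosses $W$ the endpoints $v$ and $w$ lie in opposite half-spaces of $W$; but $W$ crossing the convex subcomplex $C$ supplies a vertex $v'\in\Sigma_{K_x}$ on the same side of $W$ as $w$, and routing inside the convex $C$ from $v$ to $v'$ would produce a strictly shorter combinatorial path from $w$ to $\Sigma_{K_x}$, contradicting minimality of $|\gamma|$. There is no genuine obstacle to the argument: Proposition~\ref{stab subgraph nice} encapsulates all of the non-trivial content, and the rest is a direct application of the gate/projection machinery for convex subcomplexes of CAT(0) cube complexes.
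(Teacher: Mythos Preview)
Your argument is correct but proceeds along a genuinely different line from the paper's proof. You invoke Proposition~\ref{stab subgraph nice} to realize $\Sigma_{K_x}$ as the $1$--skeleton of a convex subcomplex and then apply the standard gate/projection machinery: minimality forces $v=\pi_C(w)$, and the hyperplanes crossed by $\gamma$ are exactly those separating $w$ from $C$, none of which meet $C$. The paper, by contrast, never appeals to convexity of $\Sigma_{K_x}$ or to gates. Its base case uses the group action: if the hyperplane $W$ dual to the first edge $e$ met $\Sigma_{K_x}$ in an edge $f$, then commensurability of $\Stab_K(f)$ and $\Stab_K(v)$ with $K_x$ (and the fact that $\Stab_K(e)$ is not so commensurable) produces $k\in K_x$ fixing $v$ and $W$ but not $e$, forcing $W$ to self-osculate at $v$. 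The inductive step then pushes a hypothetical offending hyperplane $W_{i+1}$ one slot earlier by the usual ``no-interosculation implies corner-a-square'' edge swap. Your approach is shorter and leans on well-known cubical facts; the paper's approach is more hands-on, avoids citing the gate formalism, and makes visible how the parabolic stabilizer structure of the relatively geometric action is what rules out the first hyperplane. One small caveat: your alternative contradiction sketch (``rerouting inside $C$ to a vertex $v'$ on the $w$--side of $W$ gives a shorter path'') is not quite complete as written, since it is not immediate that $d(w,v')<d(w,v)$ without already invoking the gate characterization; but your primary argument via $\pi_C$ stands on its own.
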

	
	\begin{proof}
	Suppose $e$ is the edge of $\gamma$ with endpoint $v\in\Sigma_{K_x}$. By minimality, $e\not\subseteq \Sigma_{K_x}$. 
	Let $W$ be the hyperplane dual to $e$. 
	If $W$ intersects $\Sigma_{K_x}$, then $W$ is dual to an edge $f\in \Sigma_{K_x}$. Note that $\Stab_K(f)$ and $\Stab_K(v)$ are commensurable, so there exists a $k\in K_x$ so that $K$ fixes $f$, $v$ and not $e$. Since $k$ fixes the dual edge $f$, $k\cdot W = W$. Then $k\cdot e$ is adjacent to $v$ and is dual to $W$. Therefore the hyperplane $W$ self-osculates which is impossible in a CAT(0) cube complex (see for example \cite[Pages 20-21]{WiseBook}).
	
	The first paragraph shows that the first edge (counting from $v$) of any geodesic between $v$ and $w$ cannot be dual to a hyperplane that intersects $\Sigma_{K_x}$. 
	We now assume that the first $i$ edges of any minimal geodesic $\rho$ between $v$ and $w$ is dual to a hyperplane that does not intersect $\Sigma_{K_x}$ and prove that the $i+1$st edge of $\rho$ is dual to a hyperplane that does not intersect $\Sigma_{K_x}$. 
	Now let $e_1,e_2,\ldots,e_k$ be the edges of $\rho$ with corresponding dual hyperplanes $W_1,W_2,\ldots, W_k$. If $W_{i+1}$ intersects $\Sigma_{K_x}$, then there is a disk diagram $D$ enclosed by the path $e_1\ldots e_ie_{i+1}$, a curve in the carrier of $W_{i+1}$ and a path in $K_x$. Since $W_i$ does not intersect $\Sigma_{K_x}$, it must exit $D$ by crossing $W_{i+1}$. 
	
By \cite[Lemma 3.6]{WiseBook} $W_{i+1}$ and $W_i$ cannot interosculate. Therefore, $e_i$ and $e_{i+1}$ must corner a square. Let $e_i'$ and $e_{i+1}'$ be the edges opposite $e_i$ and $e_{i+1}$ respectively.
Then let
\[\sigma' = e_1e_2\ldots e_{i-1}e_{i+1}'e_i' e_{i+2}\ldots e_k.\] 
Now $W_{i+1}$ is the hyperplane dual to the $i$th edge of $\sigma'$ from $v$, which violates the inductive hypothesis. 
Hence $W_{i+1}$ cannot intersect $\Sigma_{K_x}$. 
\end{proof}

	Before continuing with the proof of \Cref{T: hyperplane separation theorem} we require one additional auxiliary fact.  It is clear that geodesics in $\Gamma$ lift to quasi-geodesics in $\tilde{X}$ via the de-electrification map in \Cref{P: electric qi}.  We need a way to show that long enough quasi-geodesics in $\tilde{X}$ escape any finite neighborhood of some hyperplane.
	Recall the \textbf{Ramsey number} $Ram(a,b)$ is the number of vertices such that any graph on $Ram(a,b)$ vertices either contains a complete graph of size $a$ or its complement contains a complete graph of size $b$ (see \cite{RamseyTheory} for more about Ramsey numbers).  We may associate to any CAT$(0)$ cube complex its \textbf{crossing graph} with vertex set corresponding to hyperplanes and two vertices are adjacent if and only if their associated hyperplanes cross (see for example \cite{Hagen} for more details on crossing and related graphs).

	\begin{lemma}
		\label{Ramsey distance}
		Let $\Omega$ be an arbirarty CAT$(0)$ cube complex with $d = \dim(\Omega)$.
		Let $\alpha$ be an arbitrary combinatorial geodesic in $\Omega$.
		For any positive integer $N > 0$, any collection $\cal{H}$ consisting of at least $R = Ram(d+1, N)$ distinct hyperplanes all dual to edges of $\alpha$ contains a subset $\{ \hyp_1, \dotsc, \hyp_{N} \} \subseteq \cal{H}$ that form a nested sequence of $N$--halfspaces $\hyp_1^+ \subsetneq \hyp_2^+ \subsetneq \cdots \subsetneq  \hyp_{N}^+$.
		
		In particular, if the geodesic $\alpha$ has length at least $N$, then there exists a hyperplane $\hyp$ dual to an edge of $\alpha$ and a vertex $\alpha(t)$ such that $d(\alpha(t), \hyp) \geq N - 1$.  
	\end{lemma}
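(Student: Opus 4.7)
The plan is to apply Ramsey's theorem to the $2$-coloring of unordered pairs of hyperplanes in $\mathcal{H}$ by ``crossing'' versus ``disjoint,'' using as the cubical input the standard fact that a \CAT(0) cube complex of dimension $d$ contains no $d+1$ pairwise crossing hyperplanes (equivalently, pairwise crossing hyperplanes are dual to the edges of a common cube, a Helly-type property). Since $|\mathcal{H}| \geq Ram(d+1, N)$, Ramsey produces either $d+1$ pairwise crossing hyperplanes in $\mathcal{H}$---impossible, as this would force $\dim(\Omega) \geq d+1$---or $N$ pairwise disjoint hyperplanes $\hyp_1, \ldots, \hyp_N \in \mathcal{H}$.

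To extract the nested chain of half-spaces, I would reindex so that $\alpha$ crosses the $\hyp_i$ in the order $\hyp_1, \hyp_2, \ldots, \hyp_N$ (each crossed exactly once because $\alpha$ is a combinatorial geodesic), and set $\hyp_i^+$ to be the half-space containing $\alpha(0)$. For $i < j$, $\alpha(0)$ lies in $\hyp_i^+ \cap \hyp_j^+$, the other endpoint of $\alpha$ lies in $\hyp_i^- \cap \hyp_j^-$, and any vertex of $\alpha$ strictly between the $i$-th and $j$-th crossings lies in $\hyp_i^- \cap \hyp_j^+$. Because $\hyp_i \cap \hyp_j = \emptyset$, exactly one of the four sectors of half-spaces is empty, and it must be the remaining sector $\hyp_i^+ \cap \hyp_j^-$, giving $\hyp_i^+ \subsetneq \hyp_j^+$. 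This assembles into the required chain $\hyp_1^+ \subsetneq \hyp_2^+ \subsetneq \cdots \subsetneq \hyp_N^+$.

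For the ``in particular'' part, apply the main conclusion to the collection of hyperplanes dual to edges of a sufficiently long initial segment of $\alpha$ to produce the chain above. Take $\hyp = \hyp_1$ and let $\alpha(t)$ be a vertex reached after $\alpha$ has crossed all of $\hyp_1, \ldots, \hyp_N$, so $\alpha(t) \in \hyp_i^-$ for every $i$. For each $i \geq 2$, since $\hyp_i$ does not cross $\hyp_1$, $\hyp_i$ is disjoint from every cube dual to $\hyp_1$ and hence from the carrier of $\hyp_1$; the carrier is connected and contains vertices lying in $\hyp_1^+ \subsetneq \hyp_i^+$, so it lies entirely in $\hyp_i^+$. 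Thus each of the $N-1$ hyperplanes $\hyp_2, \ldots, \hyp_N$ separates $\alpha(t)$ from every vertex of the carrier of $\hyp_1$, and since combinatorial distance in $\Omega\oskel$ counts separating hyperplanes, $d(\alpha(t), \hyp_1) \geq N - 1$. I expect no serious obstacle: the Helly-type bound on pairwise crossing hyperplanes is standard, and the main care is in tracking half-space orientations in the non-crossing case.
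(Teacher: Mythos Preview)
Your proposal is correct and follows essentially the same route as the paper: apply Ramsey's theorem to the crossing graph (no $(d+1)$--clique since $\dim\Omega=d$) to extract $N$ pairwise non-crossing hyperplanes among $\mathcal{H}$, order them by where $\alpha$ crosses them to obtain the nested chain, and then read off the distance bound from the number of separating hyperplanes. The paper is terser---it declares the nesting ``clear'' where you spell out the four-sector argument, and for the final clause it takes $\hyp=\hyp_N$ with the point $p=\alpha\cap\hyp_1$ rather than your symmetric choice of $\hyp=\hyp_1$ with a point past $\hyp_N$---but the content is the same.
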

	\begin{proof}
		Let $\cal{C}$ denote the crossing graph of $\Omega$.  
		Since $\tilde{X}$ has finite dimension, any collection of pairwise crossing hyperplanes in $\Omega$ has cardinality at most $d$, so $\mc{C}$ does not contain a complete graph on $d+1$ vertices.  
		Hyperplanes dual to edges of a combinatorial geodesic are distinct by work of Sageev \cite[Theorem~4.13]{Sageev95}, so the hyperplanes dual to edges of $\alpha$ correspond to an induced sub-graph of $\cal{C}$.  
		
		Hence, any collection of at least $R$ hyperplanes dual to $\alpha$ contains a subset $\{\hyp_1, \dotsc, \hyp_N\}$ that pairwise do not cross.  Each $\hyp_i$ is dual to an edge of $\alpha$, so can be totally ordered by picking an orientation on $\alpha$.  This orientation corresponds to a choice of halfspace $\hyp_i^+$, which are clearly nested.
		
		To see the last statement of \Cref{Ramsey distance}, observe that, if $\hyp_1^+ \subsetneq \hyp_2^+ \subsetneq \cdots \subsetneq  \hyp_N^+$ is a sequence of $N$ nested halfspaces, then any points $p \in \hyp_1$ and $q \in \hyp_N$ are distance $d(p,q) \geq N-1$ apart.  Thus, for $p = \alpha \cap \hyp_1$ we have $d(p, \hyp_N) \geq N-1$.
	\end{proof}

	As we saw in the proof of \Cref{Ramsey distance}, (combinatorial) geodesics in CAT$(0)$ cube complexes may only cross a given hyperplane at most once.  On the other hand, (infinite) quasi-geodesics may cross a given hyperplane (infinitely) many times even when the underlying complex is locally finite.  Relatively geometric actions on CAT$(0)$ cube complexes give up local finiteness, but requires that the cube complex also be $\delta$--hyperbolic.  In \Cref{biinfinite sidedness,ray sidedness} we will show that in $\delta$--hyperbolic CAT$(0)$ cube complexes there is a choice of hyperplane whose interactions with a given quasi-geodesic has many of the same useful properties of a hyperplane dual to an honest geodesic.    
	A subspace $Y$ of a geodesic metric space $X$ is call \textbf{Morse} when for every $A > 0$ and $B \geq 0$ there exists a contant $D = D(A,B) \geq 0$ such that any $(A,B)$--quasi-geodesic joining points in $Y$ is contained in the $D$-neighborhood of $Y$.  
	We call $D$ the \textbf{Morse constant} for the quasi-geodesic parameters $(A,B)$.

	\begin{lemma}\label{biinfinite sidedness}
	Let $\tilde{X}$ be a $\delta$--hyperbolic CAT(0) cube complex. Let $\gamma:(-\infty,\infty)\to \tilde{X}\oskel$ be a connected bi-infinite combinatorial $(\lambda,\epsilon)$--quasi-geodesic. Given $M>1$, there exist a hyperplane $W$ of $\tilde{X}$ and $t_M>0$ so that for all $t$ with $|t|>t_M$:
	\begin{enumerate}
	\item $\gamma(\pm t)$ lie in distinct complementary components of $W$,
	\item $\gamma$ crosses $W$ an odd number of times, and
	\item $d(\gamma(t),W)>M$.
	\end{enumerate}
	In particular, $\gamma(t)\in W$ implies $|t|\le t_M$. 
	\end{lemma}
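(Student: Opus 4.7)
The strategy is to combine Morse stability of quasi-geodesics in $\delta$--hyperbolic spaces with \Cref{Ramsey distance}. Let $D = D(\lambda, \epsilon, \delta)$ be the Morse constant so that any $(\lambda, \epsilon)$--quasi-geodesic segment lies within Hausdorff distance $D$ of any geodesic with the same endpoints. Fix $N$ large enough that both $\lfloor N/2 \rfloor - 1$ and $N - \lfloor N/2 \rfloor$ exceed $M + D + 1$, and set $R = Ram(d+1, N)$ where $d = \dim \tilde{X}$.

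The first step is to choose $T > 0$ large enough that $d(\gamma(-T), \gamma(T)) \geq R$; this is possible because $\gamma$ is a $(\lambda, \epsilon)$--quasi-geodesic, so $d(\gamma(-T), \gamma(T)) \geq 2T/\lambda - \epsilon$. Let $\alpha_T$ be a combinatorial geodesic in $\tilde{X}\oskel$ from $\gamma(-T)$ to $\gamma(T)$, so that $\alpha_T$ and $\gamma|_{[-T,T]}$ are within Hausdorff distance $D$. Apply \Cref{Ramsey distance} to obtain a nested sequence of halfspaces $\hyp_1^+ \subsetneq \cdots \subsetneq \hyp_N^+$ whose bounding hyperplanes are dual to edges of $\alpha_T$, and set $W := \hyp_k$ for $k = \lfloor N/2 \rfloor$. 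Since $\hyp_1, \ldots, \hyp_{k-1}$ all separate $\gamma(-T)$ from $W$, every path from $\gamma(-T)$ to $W$ has length at least $k-1 > M + D$, and similarly for $\gamma(T)$; moreover $\gamma(-T)$ and $\gamma(T)$ lie in opposite halfspaces of $W$.

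The key difficulty is producing a bound $t_M$ that does not depend on $T$; equivalently, showing that $A := \{ t \in \mathbb{R} : d(\gamma(t), W) \leq M\}$ is bounded. If $A$ were unbounded (say at $+\infty$), then the ideal endpoint $\gamma(+\infty) \in \partial \tilde{X}$ would lie in the limit set $\Lambda W$ of $W$. I would rule this out using the nested halfspace structure: any geodesic ray from $\gamma(0)$ to $\gamma(+\infty)$ Morse-fellow-travels $\gamma|_{[0, \infty)}$ and so eventually enters each of $\hyp_{k+1}^+, \ldots, \hyp_N^+$, placing $\gamma(+\infty)$ strictly inside $\hyp_N^+$. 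Combined with convexity of halfspaces in the CAT(0) cube complex and hyperbolicity, this forces $\gamma(+\infty) \notin \Lambda W$, contradicting unboundedness of $A$ at $+\infty$. A symmetric argument handles the $-\infty$ end, after which $t_M := \sup |A|$ witnesses the conclusion. This obstacle---passing from the bounded truncation $[-T, T]$ where Ramsey was applied to the full biinfinite quasi-geodesic---is the heart of the proof.

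Finally, the three conclusions follow from $t_M$: conditions $(1)$ and $(3)$ are immediate, and $(2)$ holds because $\gamma(-t_M - 1)$ and $\gamma(t_M + 1)$ lie on opposite sides of $W$ while the crossings of $\gamma$ with $W$ are confined to the bounded set $A$, so the crossing count is finite and odd.
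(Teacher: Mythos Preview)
Your setup matches the paper's: both apply \Cref{Ramsey distance} to a geodesic $\alpha_T$ between $\gamma(-T)$ and $\gamma(T)$ to produce a nested family of halfspaces, take $W$ to be the middle hyperplane, and obtain $d(\gamma(\pm T),W)>M+D$. The divergence is in how you pass from $[-T,T]$ to all of $\mathbb R$.

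The paper does not treat this as a separate difficulty: it simply takes $t_M:=T$ and argues directly. The point is that the $M$--neighbourhood of the convex set $W$ is Morse in the $\delta$--hyperbolic space $\tilde X$. If some $\gamma(t)$ with $t>t_M$ lay in $\mc N_M(W)$, then the concatenation $\Upsilon:=\gamma|_{(-\infty,-t_M)}\cup\alpha_T\cup\gamma|_{(t_M,\infty)}$ is still a $(\lambda,\epsilon)$--quasi-geodesic whose restriction between the $\alpha_T$--crossing of $W$ and $\gamma(t)$ has both endpoints in $\mc N_M(W)$; the Morse property then forces $\gamma(t_M)\in\mc N_{D+M}(W)$, contradicting the Ramsey bound. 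This one-line contradiction replaces your entire boundary argument.

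Your proposed route through $\Lambda W$ and $\gamma(+\infty)$ has a genuine gap. The assertion that ``convexity of halfspaces and hyperbolicity force $\gamma(+\infty)\notin\Lambda W$'' is not justified, and it is not clear how to justify it without essentially reproducing the Morse argument above. Note also that $\tilde X$ need not be locally finite (indeed in the relatively geometric setting it typically is not), so invoking the visual boundary and limit sets of subsets requires care you have not supplied. Finally, with your labelling $\hyp_1^+\subsetneq\cdots\subsetneq\hyp_N^+$, the hyperplane $W=\hyp_k$ already lies in $\hyp_{k+1}^+,\dots,\hyp_N^+$, so saying $\gamma(+\infty)$ lies in $\hyp_N^+$ does not separate it from $W$; the relevant separation is by $\hyp_1,\dots,\hyp_{k-1}$ on the $\gamma(T)$ side.

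Replace the boundary detour with the direct Morse contradiction and your $T$ already serves as $t_M$.
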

	
	\begin{proof}
		For any $t_1, t_2 \in \mathbb{R}$, we write $[\gamma(t_1),\gamma(t_2)]$ to mean any combinatorial geodesic connecting $\gamma(t_1)$ to $\gamma(t_2)$.  Note also that the $M$--neighborhood of a convex subset of $\tilde{X}$ is necessarily Morse.  
		
		Let $D = D(\lambda, \varepsilon) > 0$ be the Morse constant for the quasi-geodesic parameters of $\gamma$.
		Let $R$ be the constant from \Cref{Ramsey distance} with $N = 2(D + M + 1) + 1$.  
		Choose
		\[
		t_M >  \lambda(R + \varepsilon).
		\]
		Let $\gamma_0 = [\gamma(-t_M), \gamma(t_M)]$. 
		By choice of $t_M$, we have $\diam(\gamma_0) > R$.  
		\Cref{Ramsey distance} guarantees that there exist hyperplanes $\{\hyp_i :  -(D + M + 1) \leq i \leq D+M+1\}$ all dual to edges of $\gamma_0$ that form the following nested sequence of halfspaces: 
		\[
		\hyp_{-(D +M +  1)} \subsetneq \cdots \subsetneq \hyp_{-1} \subsetneq \hyp_0 \subsetneq \hyp_1 \subsetneq \cdots \subsetneq \hyp_{D + M + 1}.
		\]
		We will see that we may choose $W = \hyp_0$.
		Since the halfspaces of the $\hyp_i$ are nested:
		\begin{equation}\label{eq: dist}
		\min \{ d(\gamma(-t_M), \hyp_0), \, d(\gamma(t_M), \hyp_0) \} > D + M. \tag{$\dagger\dagger$}
		\end{equation}
		The concatenation,
		\[
		\Upsilon : = \gamma\vert_{(-\infty, -t_M)} \cup \gamma_0 \cup \gamma\vert_{(t_M, \infty)},
		\]
		is again a $(\lambda, \varepsilon)$--quasi-geodesic. Since $\gamma_0$ crosses $\hyp_0$, if $\gamma(t)\in\mc{N}_M(\hyp_0)$ for some $t$ with $t>t_M$ then $\gamma(t_M)\in \mc{N}_{D+M}(\hyp_0)$, contrary to \eqref{eq: dist}. We conclude that if $t>t_M$, $\gamma(t_M)\notin \mc{N}_M(\hyp_0)$. Similarly, if $t<-t_M$, $\gamma(t)\notin \mc{N}_M(\hyp_0)$.   
		Since $\Upsilon$ and $\gamma$ coincide for all $|t| > t_M$, we immediately have that $\gamma(t)$ and $\gamma(-t)$ lie in distinct components of $\tilde{X}\, \,\setminus\, \mc{N}_{M}(\hyp_0)$.  Since $\gamma_0$ crosses $\hyp_0$ once, $\gamma$ crosses $\hyp_0$ an odd number of times.  
	\end{proof}

	It remains to account for the situation where $\gamma$ joins a parabolic point to a conical limit point.
	Using a similar argument to \Cref{biinfinite sidedness}, it is possible to prove:
	
	\begin{lemma}\label{ray sidedness}
	Let $\gamma:[0,\infty)\to \tilde{X}$ be an infinite combinatorial $(\lambda,\epsilon)$--quasi-geodesic ray in a $\delta$--hyperbolic CAT(0) cube complex $\tilde{X}$ where $x = \gamma(0)$ is a vertex. Given $M>1$, there exists a hyperplane $W$ of $\tilde{X}$ and $t_M>0$ so that for all $t$ with $t>t_M$:
		\begin{enumerate}
		\item $W$ separates $\gamma(0)$ and $\gamma(t)$,
		\item $\Sigma_{\Stab_K(x)}\cap W = \emptyset$,
		\item $d(\gamma(t),W)>M$. 
		\end{enumerate}
		In particular, if $\gamma(t)\in W$ then $0<t\le t_M$. 
	\end{lemma}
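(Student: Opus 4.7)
The plan is to adapt the Ramsey-plus-Morse strategy used in Lemma~\ref{biinfinite sidedness} to the case of a ray starting at a vertex $x$, with the added constraint that $W$ must avoid $\Sigma_{\Stab_K(x)}$. I split into cases according to whether $K_x := \Stab_K(x)$ is finite or infinite. If $K_x$ is finite, then $\Sigma_{K_x}$ is either empty or equal to $\{x\}$, so condition~(2) is automatic because $W$ is a hyperplane and $x$ is a vertex; the proof of Lemma~\ref{biinfinite sidedness} (with the obvious one-sided adjustment) applied to a combinatorial geodesic from $x$ to $\gamma(n)$ for $n$ large then produces a hyperplane $W$ satisfying (1) and (3).

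Assume henceforth that $K_x$ is infinite, so that $x$ lies in the connected compact sub-graph $\Sigma_{K_x} \subseteq \tilde{X}^{(1)}$. For each $n$, fix a vertex $\gamma(t_n)$ with $t_n \to \infty$, and let $\alpha_n$ be a combinatorial geodesic from $\gamma(t_n)$ to $\Sigma_{K_x}$ of minimal length among all such geodesics; denote by $v_n \in \Sigma_{K_x}$ its endpoint in $\Sigma_{K_x}$. Lemma~\ref{L: stabilizer classification} then guarantees that no hyperplane dual to an edge of $\alpha_n$ crosses $\Sigma_{K_x}$, which is the key input for condition~(2). Because $\Sigma_{K_x}$ is bounded and $\gamma$ is a quasi-geodesic ray, $|\alpha_n| = d(\gamma(t_n),\Sigma_{K_x}) \to \infty$. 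For $n$ sufficiently large, Lemma~\ref{Ramsey distance} applied to $\alpha_n$ with $N := 2(D + M + 1) + 1$ (where $D$ is the Morse constant for $(\lambda,\epsilon)$--quasi-geodesics in $\tilde{X}$) produces a nested family of halfspaces
\[
\hyp_{-(D+M+1)}^+ \subsetneq \cdots \subsetneq \hyp_0^+ \subsetneq \cdots \subsetneq \hyp_{D+M+1}^+
\]
coming from hyperplanes dual to edges of $\alpha_n$. I take $W := \hyp_0$. Condition~(2) holds by construction, and since $\Sigma_{K_x}$ is connected and disjoint from $W$, both $x$ and $v_n$ lie on the $\hyp_0^+$ side; simultaneously, the nested structure places $\gamma(t_n)$ on the $\hyp_0^-$ side at distance at least $D + M + 1$ from $W$, establishing (1) and (3) at the sample time $t = t_n$.

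The main obstacle is extending (1) and (3) from the discrete samples $t_n$ to all $t > t_M$ for a single $t_M$, given that $\tilde{X}^{(1)}$ is not locally finite. I handle this exactly as in Lemma~\ref{biinfinite sidedness}: let $\mu$ be a shortest path in the connected graph $\Sigma_{K_x}$ from $x$ to $v_n$, and form the auxiliary ray $\Upsilon := \mu \cdot \alpha_n \cdot \gamma|_{[t_n,\infty)}$. This $\Upsilon$ is a quasi-geodesic ray starting at $x$ with parameters depending only on $\lambda$, $\epsilon$, and $\diam(\Sigma_{K_x})$. If $\gamma(t^*) \in \mc{N}_M(W)$ for some $t^* > t_n$, then applying the Morse property to the sub-quasi-geodesic of $\Upsilon$ from $x$ to $\gamma(t^*)$ forces $\gamma(t_n)$ within $D + M$ of $W$, contradicting the nested distance bound. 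Setting $t_M$ to $t_n$ plus an additive constant depending only on $\lambda$, $\epsilon$, $\delta$, $M$, and $\diam(\Sigma_{K_x})$ then completes the verification of (1) and (3); the final clause is immediate since $d(\gamma(t),W) > M > 0$ for $t > t_M$ precludes $\gamma(t) \in W$.
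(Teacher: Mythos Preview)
Your approach is sound and obtains condition~(2) by a genuinely different route from the paper.  The paper does not invoke Lemma~\ref{L: stabilizer classification} at all; instead it takes a combinatorial geodesic from $x=\gamma(0)$ to $\gamma(t)$ for $t$ large, extracts the nested family exactly as in Lemma~\ref{biinfinite sidedness}, and chooses $W=\hyp_0$ in the middle.  Condition~(2) then follows from a pure distance estimate: after enlarging $N$ so that the nesting forces $d(x,W)>\max_{D\in\mc{D}}\diam\Sigma_D$, any hyperplane meeting $\Sigma_{K_x}$ would be dual to an edge of $\Sigma_{K_x}$ and hence within $\diam\Sigma_{K_x}$ of $x$, a contradiction.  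This is shorter: no minimal geodesic to $\Sigma_{K_x}$, no auxiliary path $\mu$, and no concatenated ray $\Upsilon$ with altered quasi-geodesic constants.  Your route, by contrast, buys the stronger fact that \emph{every} hyperplane dual to an edge of $\alpha_n$ avoids $\Sigma_{K_x}$, at the cost of a more elaborate Morse step.

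One inconsistency to repair: you fix $D$ as the Morse constant for $(\lambda,\epsilon)$--quasi-geodesics and use it to set $N=2(D+M+1)+1$, but the Morse argument is then applied to a sub-segment of $\Upsilon$, which you correctly note has strictly worse constants depending on $\diam(\Sigma_{K_x})$.  The conclusion ``$\gamma(t_n)$ within $D+M$ of $W$'' therefore uses the wrong constant, and the inequality $d(\gamma(t_n),W)>D+M$ need not yield a contradiction.  The fix is straightforward: let $D$ be the Morse constant for the parameters of $\Upsilon$ from the outset (these depend only on $\lambda,\epsilon,\delta$ and $\diam\Sigma_{K_x}$, hence are uniform in $n$), and choose $N$ accordingly.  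Also note that the relevant sub-segment of $\Upsilon$ with endpoints in $\mc{N}_M(W)$ runs from the $W$--crossing point on $\alpha_n$ to $\gamma(t^*)$, not from $x$; this is what places $\gamma(t_n)$ on the segment.
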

	
	The same strategy used in Lemma~\ref{biinfinite sidedness} works to prove \Cref{ray sidedness} with the following adjustments:
	\begin{itemize}
	\item the hyperplane $W$ should be the hyperplane dual to an edge in the middle of a geodesic joining $\gamma(0)$ and $\gamma(t)$ for some suitable $t>>0$. 
	\item When $t>> \max_{D\in\mc{D}}\{\diam \Sigma_D\}$, then $\Sigma_{\Stab_K(x)}\cap W = \emptyset$.
	\end{itemize}
	
	

We summarize the discussion above in the context of separating points in the Bowditch boundary as follows:
	
	\begin{proposition}\label{P: hyperplane separation}
	Let $x,y \in \partial_{\mc{D}}K$ be distinct. There exists a hyperplane that separates $x,y$ in the sense of Definition~\ref{D: upstairs separation} with respect to the action of $K$ on $\tilde{X}$. 
	\end{proposition}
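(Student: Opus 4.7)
The plan is to split into three cases matching Definition~\ref{D: separation}, depending on whether each of $x,y$ is a conical limit point of $\Gamma$ or a parabolic vertex. In each case the strategy is the same: realize a geodesic between $x$ and $y$ in $\Gamma$, de-electrify via Proposition~\ref{P: electric qi} to a combinatorial quasi-geodesic in $\tilde{X}\oskel$, and then use one of Lemma~\ref{biinfinite sidedness} or Lemma~\ref{ray sidedness} (or, when both endpoints are parabolic, a direct Ramsey-type argument built from Lemma~\ref{Ramsey distance}) to produce a hyperplane $W$ whose hyperset $L$ cuts the quasi-geodesic cleanly and stays far from its ends. Observation~\ref{P: collapsing hypersets} then translates this $\tilde{X}\oskel$-level sidedness into $\Gamma$: the image $\beta(L)$ is a hyperset in $\Gamma$, and for $|t|$ large the corresponding parameters on the electrified geodesic land in distinct complementary components of $\Gamma\,\setminus\,\beta(L)$.

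For the case-by-case construction: if $x$ and $y$ are both conical, take any bi-infinite geodesic in $\Gamma$ from $y$ to $x$ and de-electrify to a bi-infinite combinatorial quasi-geodesic $\gamma$ in $\tilde{X}\oskel$; Lemma~\ref{biinfinite sidedness} applied with large $M$ supplies $W$. If $x$ is parabolic with stabilizer $K_x$ and $y$ is conical, choose a geodesic ray in $\Gamma$ from $v_{K_x}$ to $y$ and de-electrify to a quasi-geodesic ray $\gamma$ in $\tilde{X}\oskel$ based at a vertex of $\Sigma_{K_x}$; Lemma~\ref{ray sidedness} then gives $W$ disjoint from $\Sigma_{K_x}$ that separates $\gamma(0)$ from its tail. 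If both are parabolic, take a doubly-minimal combinatorial geodesic $\gamma$ in $\tilde{X}\oskel$ between $\Sigma_{K_x}$ and $\Sigma_{K_y}$; applying Lemma~\ref{L: stabilizer classification} from each endpoint forces every hyperplane dual to an edge of $\gamma$ to miss both parabolic subgraphs, and Lemma~\ref{Ramsey distance} then extracts a nested sequence of halfspaces in the middle of $\gamma$ so that a central one $W$ is far from both endpoints.

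The main obstacle is verifying the first clause of Definition~\ref{D: separation}, namely that $x,y\notin\Lambda K_W$, where $K_W=\Stab_K(W)=\Stab_K(\beta(L))$. Since $K_W$ is relatively quasi-convex in $(K,\mc{D})$ by Corollary~\ref{C: hypstab rel qc} and acts cocompactly on $W$, every conical point of $\Lambda K_W$ is the endpoint of a quasi-geodesic ray that stays in a uniform neighborhood of a $K_W$-translate of $W$, while every peripheral point of $\Lambda K_W$ is, by Proposition~\ref{P: hyperplane parabolic iff subgraph intersects}, the electrification of some $\Sigma_{D^k}$ that actually meets $W$. Choosing $M$ in Lemma~\ref{biinfinite sidedness} or Lemma~\ref{ray sidedness} larger than the Morse constant associated to the $(\lambda,\epsilon)$-quasi-geodesic parameters of $\gamma$ forces the ideal endpoints of $\gamma$ out of $\Lambda K_W$ in the conical cases; in the parabolic cases, the disjointness of $W$ from $\Sigma_{K_x}$ and $\Sigma_{K_y}$ combined with Proposition~\ref{P: hyperplane parabolic iff subgraph intersects} rules out $v_{K_x}$ and $v_{K_y}$ as peripheral points of $\Lambda K_W$. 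Once this is in hand, the geodesic condition of Definition~\ref{D: separation} is immediate from sidedness together with Observation~\ref{P: collapsing hypersets}, so $W$ separates $x$ and $y$ in the sense of Definition~\ref{D: upstairs separation}.
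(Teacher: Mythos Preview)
Your proposal is correct and follows the same three-case strategy as the paper: de-electrify a geodesic in $\Gamma$ to a combinatorial quasi-geodesic in $\tilde X^{(1)}$, apply Lemma~\ref{biinfinite sidedness} or Lemma~\ref{ray sidedness} in the conical cases, and handle the doubly-parabolic case via a minimal geodesic between $\Sigma_{K_x}$ and $\Sigma_{K_y}$ together with Lemma~\ref{L: stabilizer classification} and Proposition~\ref{P: hyperplane parabolic iff subgraph intersects}.

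Two small points where the paper is leaner. In the both-parabolic case the paper does not invoke Lemma~\ref{Ramsey distance} at all: once you have a minimal geodesic $\hat\gamma$ between the two parabolic subgraphs, it must contain an edge with finite stabilizer (because $K_x\cap K_y$ is finite), and the hyperplane $W$ dual to \emph{that} edge already works. Lemma~\ref{L: stabilizer classification} gives $W\cap\Sigma_{K_x}=W\cap\Sigma_{K_y}=\emptyset$, and then Proposition~\ref{P: hyperplane parabolic iff subgraph intersects} gives $x,y\notin\Lambda K_W$; no ``far from both endpoints'' condition is needed, and indeed your Ramsey step would not be available when $\hat\gamma$ is short. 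Since your actual justification that $x,y\notin\Lambda K_W$ in this case is disjointness plus Proposition~\ref{P: hyperplane parabolic iff subgraph intersects}, you can simply drop the Ramsey step. For the conical endpoints the paper verifies $x,y\notin\Lambda K_W$ by a direct thin-quadrilateral argument showing $d(\hat\gamma(t),W)\to\infty$ (using only $M>\max_D\diam\Sigma_D+2\delta$), rather than appealing to Corollary~\ref{C: hypstab rel qc} and Morse constants; your route is valid but heavier than necessary.
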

	
	\begin{proof}
	If $x,y$ are both conical limit points, let $\gamma$ be a bi-infinite geodesic between $x$ and $y$. Then the complete de-electrification $\hat\gamma$ of $\gamma$ is a connected bi-infinite quasi-geodesic such that $\beta(\hat\gamma)=\gamma$.  
	Lemma~\ref{biinfinite sidedness} allows us to choose a hyperplane in $\tilde{X}$ so that if $|t|>>0$, $\hat\gamma(t)$ does not cross $W$, $\hat\gamma(\pm t)$ are on opposite sides of $W$ and $d(\hat\gamma(t),W)>\max_{D\in \mc{D}} \{\diam \Sigma_{D}+2\delta\}$. 
	Therefore for $|t|>>0$, $\gamma(\pm t)$ each lie in distinct components of $\Gamma\,\setminus\, \beta(L)$ by Lemma~\ref{biinfinite sidedness}.
	For all $S>0$, $\diam\{t:\,d(\gamma(t),W)\le S\}<\infty$ because otherwise a standard hyperbolic geometry argument using a thin quadrilateral shows that $\diam\{t:\,d(\gamma(t),W)<2\delta\}$ is not bounded above, which contradicts our choice of $W$. Therefore, $\lim_{t\to\pm\infty} d(\gamma(t),W) =\infty$, and $x,y$ are not in the limit set of the stabilizer of $W$. 
	
	If one of $x,y$ is a conical limit point, assume without loss of generality that $x$ is the conical limit point and $y$ is a parabolic point. 
	Apply the argument from the previous case except use Lemma~\ref{ray sidedness} in place of Lemma~\ref{biinfinite sidedness} to extract the desired hyperplane $W$. 
	To see that $y\notin \Lambda \Stab_K(W)$, observe that $W\cap \Sigma_{\Stab_K(x)} =\emptyset$ and apply Proposition~\ref{P: hyperplane parabolic iff subgraph intersects}. 
	
	If $x,y$ are both distinct parabolic vertices in $\Gamma$, let $\hat\gamma$ be a minimal length geodesic in $\tilde{X}\oskel$ between $\Sigma_{\Stab_K(x)}$ and $\Sigma_{\Stab_K(y)}$. 
	Then $\gamma$ has an edge with finite stabilizer because $\Stab_K(x)\cap \Stab_K(y)$ is finite.
	Let $W$ be the hyperplane dual to this edge. 
	We see that $x,y\notin\Lambda \Stab_K(W)$ by Lemma~\ref{L: stabilizer classification} and Proposition~\ref{P: hyperplane parabolic iff subgraph intersects}.
	\end{proof}

	Finally, we prove Theorem~\ref{T: hyperplane separation theorem} from the introduction.
	
	\relgeomseparation*
	
	
\begin{proof}
Let $W$ be the separating hyperplane specified by Proposition~\ref{P: hyperplane separation} with associated hyperset $L$. 
Then $\beta(L)$ separates $x,y$ and its carrier $\beta(J)$ are quasi-convex. 
Since $K$ acts relatively geometrically, $K$ acts cocompactly on $\tilde{X}$ and on $\Gamma$, see Proposition~\ref{P: electric qi}.  In particular, $\tilde{X}$ is finite dimensional. Edge stabilizers are finite because each maximal parabolic stabilizes exactly one vertex and all cell stabilizers are parabolic. 
By Proposition~\ref{P: hyperplane two-sided carrier}, $\beta(J)$ has the two-sided carrier property. 
Therefore, by Theorem~\ref{T: separation criterion}, there exists a subgroup ${K_W}$ of index at most 2 in $\Stab_G(W)$ so that $x,y$ are in ${K_W}$--distinct components of $\partial_{\mc{D}}K \,\setminus\, \Lambda {K_W}$.  
	\end{proof}

	\bibliography{../../cubes}
	\bibliographystyle{alpha}
\end{document}